\documentclass[11pt]{article}

\usepackage[a4paper,margin=1.4in]{geometry}
\usepackage{amsmath,amssymb,amsfonts,amsthm}
\usepackage{amsmath,amssymb,amsthm}
\usepackage{amsmath, amssymb}
\usepackage{pgfplots}
\pgfplotsset{compat=1.18}
\usetikzlibrary{patterns}

\usepackage{float}

\usepackage{tikz}
\usetikzlibrary{positioning}
\usepackage{xcolor}
\usepackage{tikz-cd}
\usepackage[ruled,vlined,linesnumbered]{algorithm2e}

\usepackage{amscd}   
\usepackage{tikz-cd} 
\usepackage{pgfplots}
\pgfplotsset{compat=1.18}
\usepackage[colorlinks=true,
            citecolor=blue,
            linkcolor=blue,
            urlcolor=blue]{hyperref}

\newtheorem{theorem}{Theorem}[section]
\newtheorem{lemma}[theorem]{Lemma}
\newtheorem{proposition}[theorem]{Proposition}
\newtheorem{definition}[theorem]{Definition}
\newtheorem{remark}[theorem]{Remark}
\newtheorem{example}[theorem]{Example}
\newtheorem{Corollary}[theorem]{Corollary}
\usepackage{xcolor}
\usepackage{hyperref}
\hypersetup{
    colorlinks=false,
    pdfborder={0 0 1.2},
    linkbordercolor={red},     
    citebordercolor={green},   
    urlbordercolor={blue}
}

\title{\textbf{On Projective and Flat Persistence Modules}}

\author{
Prateep Chakraborty\thanks{
Department of Mathematics;
Indian Institute of Technology Kharagpur;
Kharagpur, West Bengal,  721302, India. \quad
\textit{Email:} prateep@maths.iitkgp.ac.in}
\and
Giriraj Ghosh\thanks{
Department of Mathematics;
Indian Institution of Technology Kharagpur;
Kharagpur, West Bengal,  721302, India. \quad
\textit{Email:} gghoshmaths@kgpian.iitkgp.ac.in}
}

\date{}
\begin{document}
\maketitle
\thispagestyle{plain}

\begin{abstract}
 In recent years, persistence modules have been viewed as graded modules with gradation over a preordered set serving as the indexing set. We provide sufficient criteria for a projective module over a PID to be free when the indexing set is a lattice. With a lattice as the indexing set, we obtain criteria ensuring that a given persistence module is not projective. When the indexing set is a preordered set, we establish the flatness of a well-known family of persistence modules. We end the article with two algorithms to compute a basis of free persistence modules with indexing sets $\mathbb{Z}$ and $\mathbb{Z}^2$.
\end{abstract}

\vspace{0.2cm}

\noindent
\textbf{Keywords:}
persistence module, projective module, flat module, graded module, poset.\\

\noindent
\textbf{22020 Mathematics Subject Classification} \qquad Primary: 55N31, 13C10, 13C11;  \qquad  Secondary: 06A07, 06A11.
\vspace{0.8cm}







\section{Introduction}

Persistence modules arise naturally in topological data analysis as algebraic invariants associated with filtered topological spaces. 
\begin{definition}
A \textbf{persistence module} \(M\) over a ring \( R \) is a sequence of \( R \)-modules
\[
M_1 \xrightarrow{f_{1,2}} M_2 \xrightarrow{f_{2,3}} M_3 \xrightarrow{f_{3,4}} \cdots \xrightarrow{f_{n-1,n}} M_n
\]
together with \( R \)-module homomorphisms among them such that for each \( i \leq j \),
the map \( f_{i,j} : M_i \to M_j \) is defined as the composition
\(
f_{i,j} = f_{j-1,j} \circ f_{j-2,j-1} \circ \cdots \circ f_{i,i+1},
\)
and \( f_{i,i} \) is the identity map on \( M_i \).
The sequence describes how the modules \emph{persist} through the indices \( i \) to \( j \)
for certain intervals, depending on the context.
\end{definition}
When the indexing set is a finite subset of $\mathbb{R}$ (\cite{zomorodian2005computing}, \cite{gabriel1972unzerlegbare}
) and the persistence module satisfies the pointwise finite–dimensional condition (\cite{webb1985decomposition}, \cite{crawleyboevey2015decomposition}) or $q$-tame condition (\cite{chazal2016structure}), the persistence module decomposes uniquely (up to isomorphism) into interval modules. The theory has since been extended to more general indexing sets such as $\mathbb{R}^n$, $\mathbb{Z}^n$, and partially ordered sets. In \cite{bubenik2021homological}, Bubenik and Milićević further generalizes the framework by introducing persistence modules indexed by a preordered set.

From an algebraic viewpoint, persistence modules have been studied as graded modules (\cite{zomorodian2005computing}, \cite{carlsson2009multidimensional}, \cite{lesnick2015interleaving}),
 as functors (\cite{bubenik2014categorification}, \cite{bubenik2015metrics}), and as sheaves (\cite{curry2014sheaves}). Bubenik and Milićević developed several aspects of the graded–module and sheaf–theoretic approaches. Using these two perspectives, they defined tensor products, $\mathrm{Hom}$ bifunctors, and the derived functors $\mathrm{Tor}$ and $\mathrm{Ext}$ for persistence modules. Within this setup they obtained a criterion for projectivity: 
 
 \textit{With a preordered set $P$ together with an abelian group structure as the indexing set,  a finitely generated persistence module over a field is graded projective if and only if it is graded free (see \cite[Corollary~2.20]{bubenik2021homological}).}
 
 In particular, with the indexing set $\mathbb{R}$, they also characterized when interval modules are projective, flat, or injective. The projective persistence modules are discussed in \cite{blanchette2023exact} too.

Since Bubenik and Milićević's results apply to arbitrary preordered sets, they are in particular valid for lattices. On a lattice, we generalize this result as follows. 
\begin{theorem}\label{0}
Let \( R \) be a principal ideal domain, and let \( P \) be a lattice equipped with a compatible abelian group structure. Suppose with the indexing set $P$, \( M \) is a persistence module over $R$ such that 
\[
M = \bigoplus_{a \in P} M_a,
\] and each $M_a$ is finitely generated $R$-module. Then \( M \) is graded projective if and only if it is graded free.

(See Section \ref{sec2} for a detailed discussion of the necessary concepts.)
\end{theorem}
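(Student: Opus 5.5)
The direction ``graded free $\Rightarrow$ graded projective'' is formal. A graded free module is a direct sum of shifted copies $R[P]\cdot x_a$ of the monoid ring. Each such copy represents the functor $N\mapsto N_a$, so $\mathrm{Hom}(R[P](-a),-)$ is exact on graded modules. Direct sums of projectives are projective, so this direction needs nothing more.

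For the converse I would follow the Bubenik--Milićević strategy in \cite[Corollary~2.20]{bubenik2021homological}, replacing the field by the PID $R$. \textbf{Step 1:} reduce to local data at each grade. For $a\in P$ let $M_{<a}$ denote the submodule of $M_a$ generated by the images $f_{b,a}(M_b)$ for $b<a$, $b\neq a$. Because $P$ is a lattice, this family of images is directed, and the join structure keeps the generating set controlled. Set $Q_a=M_a/M_{<a}$; this is the ``indecomposable'' (top) part at grade $a$, i.e.\ $Q = M\otimes_{R[P]} R$ with the augmentation killing positive-degree elements. \textbf{Step 2:} show that projectivity of $M$ forces each $Q_a$ to be a projective $R$-module. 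The functor $M\mapsto M\otimes_{R[P]}R$ sends graded free modules to graded free $R$-modules and preserves direct summands. A projective $M$ is a summand of a graded free module, so each $Q_a$ is a summand of a free $R$-module. Since $M_a$ is finitely generated, $Q_a$ is finitely generated and projective over the PID $R$, hence free of finite rank. \textbf{Step 3:} choose an $R$-basis of each $Q_a$ and lift it to homogeneous elements of $M_a$. These give a graded free module $F$ and a map $\varphi\colon F\to M$ with $\varphi\otimes R$ an isomorphism.

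\textbf{Step 4}, which I expect to be the main obstacle, is to show that $\varphi$ is an isomorphism. This needs a graded Nakayama argument, and there is no global finiteness to lean on. I would first try to prove surjectivity by well-founded induction on grades. Over a general lattice such as $\mathbb{Z}$ or $\mathbb{R}$ there are infinite descending chains, so this induction can fail. To handle that I would use projectivity once more: write $M\oplus N=G$ with $G$ graded free. Then $\mathrm{coker}\,\varphi$ is a graded summand-type module whose top part vanishes, and I would argue that a direct summand of a free module with zero top must be zero. The argument is to look at a homogeneous element and its coordinates in the basis of $G$, where some basis element of minimal degree in its support gives a nonzero image in the top. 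Here the lattice property (meets) would be used to compare supports. Given surjectivity, $\varphi$ splits because $M$ is projective, so $F\cong M\oplus K$ with $K\otimes R=0$. The same ``zero top for a summand of a free module'' lemma then gives $K=0$, and $M\cong F$ is graded free.
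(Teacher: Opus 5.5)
Your Steps 1--3 are sound, and so is the kernel half of Step 4. Two small points there. The directedness remark in Step 1 is false: $(1,0),(0,1)<(1,1)$ in $\mathbb{Z}^2$ with join $(1,1)$. You never use it, though. Also, the lemma that a graded summand $K$ of a graded free $G$ with $K=\mathfrak{m}K$ must vanish is true ($\mathfrak{m}$ spanned by the $t^s$, $s>0$). But the argument that works picks a basis element of \emph{maximal} degree in the support of $x=\pi(x)$. It uses that the projection $\pi$ onto $K$ sends $G$ into $\mathfrak{m}G$, so it strictly lowers support degrees; no meets are needed.

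The genuine gap is surjectivity of $\varphi$. You apply that lemma to $\operatorname{coker}\varphi$, but $\operatorname{coker}\varphi$ is merely a quotient of $M$, and nothing makes it a summand of a free module. Nonzero graded modules with zero top do exist, for example $R_{\mathbb{Z}}$ over $P=\mathbb{Z}$. Tellingly, Step 4 never uses that the $M_a$ are finitely generated, and without that hypothesis the claim fails. Over $P=\mathbb{Z}$, let $M$ be graded free on $u_{-n}$ ($n\ge 0$, $\deg u_{-n}=-n$), and lift the top basis by $e_{-n}=u_{-n}-t\,u_{-n-1}$. Then $\varphi\otimes R$ is an isomorphism and $\varphi$ is injective. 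Yet $u_0\notin\operatorname{Im}\varphi$, since the would-be expansion $u_0=e_0+te_{-1}+t^2e_{-2}+\cdots$ never stops, and $\operatorname{coker}\varphi\cong R_{\mathbb{Z}}$.

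What is missing is the finiteness the paper extracts from $\operatorname{rank}M_i<\infty$, together with a descent statement that keeps the recursion inside a finite set of degrees. First, the paper shows (Proposition~\ref{13}, Theorem~\ref{14}) that the images $t^{i-r}e^r_j$, $r<i$, of the lifted generators are linearly independent in $M_i$, so $\sum_{r<i}b_r$ is finite, bounded by $\operatorname{rank}M_i$. In your language this is essentially injectivity of $\varphi$. You can also get it from the maximal-degree argument applied to $F\to M\hookrightarrow G$, since the induced map of tops is injective. It forces only finitely many $r\le i$ to have $Q_r\neq 0$. Second, the paper shows (Lemma~\ref{propp}), using the lift $p$ of $\mathrm{id}_M$ through $\bigoplus_i M_{\langle i\rangle}\to M$, that every $x\in D_k$ equals $\sum t^{k-\ell}x_\ell$ with $\ell<k$ and $x_\ell\notin D_\ell$. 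The point is that at a maximal degree $\ell$ in the support of $p(x)$, the $M_{\langle\ell\rangle}$-component of $p$ vanishes on $D_\ell$, and lower terms are absorbed into these. Now write $x=\sum_j c_je^k_j+y$ with $y\in D_k$ and recurse. Every degree visited lies in the finite set $\{r\le k: b_r\neq 0\}$, and degrees strictly decrease along each branch, so the process terminates. This is the Nakayama substitute your Step 4 needs. Once surjectivity is obtained this way, your splitting argument disposes of the kernel; the paper instead gets uniqueness from Proposition~\ref{13}.
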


Note that our results hold over a PID rather than only over a field. When $R$ is a field, the condition that each $M_a$ is a finitely generated $R$-module coincides with the notion of $q$-finite modules introduced in \cite{miller2020posets}. Moreover, for $P=\mathbb{Z}^n$ and $R=\mathbb{K}$ a field, our result yields a proof of the graded analogue of Serre’s conjecture (See \cite{lam1978serre}, \cite{serre1955fac}).

The proof proceeds by introducing a notion of `indecomposable elements' adapted to
the graded persistence setting. By applying the notion of a minimal element of certain subsets of $P$, we show the existence of indecomposable elements. By analyzing the degrees at which indecomposable elements appear
and by controlling their linear independence, we construct a homogeneous basis and
establish freeness.  This approach is intrinsically homological and does not rely
on interval decompositions.

In the process of proving Theorem \ref{0}, we show that any homogeneous element of a projective persistence module can be written `in terms of indecomposable elements' (see Lemma \ref{propp}), which is applied in Section \ref{sec5} to derive criteria ensuring that certain persistence modules cannot be projective (see Lemma \ref{lemfalt}). In \cite{bubenik2021homological}, the authors provide a collection of graded flat modules where the indexing set is $\mathbb{R}$. Generalizing the indexing set to any preordered set with a compatible abelian group structure, a collection of graded flat modules is constructed in this article. In conclusion, we construct broad families of flat persistence modules that fail to be projective (see Example \ref{exx}). These constructions demonstrate that flatness and projectivity diverge in meaningful ways over a PID, especially in lattice-indexed settings.

Our main results provide a method for constructing bases of projective (that is free) persistence modules. We conclude by giving an explicit algorithm to compute such bases in the cases $P=\mathbb{Z}$ and $P=\mathbb{Z}^2$.

The paper is organized as follows. In Section~\ref{sec2} we recall the necessary background and the interpretation of a persistence module as a graded module over certain monoid ring. In Section \ref{sec3} we introduce the notion of \emph{'indecomposable elements`}, establish their existence, and prove auxiliary results used later. Section \ref{sec4} shows that a linearly independent set of indecomposable elements forms a basis of a projective persistence module. In Section \ref{sec5} we construct a family of flat modules that are not projective. Finally, Section \ref{sec6} presents an algorithm for producing a basis of a free persistence module.

\section{Preliminaries}\label{sec2}
 In this section, we recall the notions of preordered sets, posets, group rings, graded rings, graded modules, graded projective modules, and graded flat modules.
We also define the notion of persistence modules over preordered sets with a compatible abelian group structure and view it as graded modules over a certain monoid ring, following the approach of Bubenik and Milićević~\cite{bubenik2021homological}.

\subsection{Preordered sets and posets}
We refer the reader to \cite{bubenik2021homological} for a detailed discussion of this subsection. Given a preordered set $(P,\leq)$, we can associate a category $\mathbf{P}$ whose objects are the elements of $P$ and whose morphisms are given by the relations $x \leq y$.
With $P$ as the indexing set, a persistence module over $R$ is then defined as a functor
\[
M \colon \mathbf{P} \to \mathbf{A},
\]
where $\mathbf{A}$ denotes the category of modules over a principal ideal domain $R$.
The morphisms between persistence modules are natural transformations between such functors.
\begin{definition}
Let $(P,\le)$ be a preordered set. A subset $U$ of $P$ is called an up-set if for any $x \in U$ and $y \in P$ with $y \ge x$, we have $y \in U$. Similarly, a subset $V$ of $P$ is called a down-set of $P$ if for any $x \in V$ and $y \in P$ with $y \le x$, we have $y \in V$.

For any $a \in P$, the principal up-set $U_a$ is defined as
\[
U_a = \{ x \in P \mid x \ge a \}.
\]
Similarly, the principal down-set $V_a$ is defined as
\[
V_a = \{ x \in P \mid x \le a \}.
\]

    A subset $S$ of $P$ is called convex if for $a \le c \le b$ with $a, b \in S$ and $c \in P$, we have $c \in S$. Note that both up-sets and down-sets are convex.

\end{definition}
\begin{definition}
   We now define a \textbf{minimal} element of a subset \( S \subseteq P \), if it exists, as an element \( m \in S \) such that there is no \( s \in S \) with \( s < m \); that is, \( s \leq m \) implies \( s = m \).  
Similarly, a \textbf{maximal} element of \( S \) is an element \( m' \in S \) such that there is no \( s \in S \) with \( s > m' \). 
\end{definition}

Recall that a partially ordered set (or poset) is a particular kind of preordered set.
For a poset $(P,\leq)$ and elements $x,y \in P$, we define the \textbf{join} (or supremum) of $x$ and $y$, denoted by
\(
x \vee y,
\)
to be the smallest element $r \in P$ such that $x \leq r$ and $y \leq r$.
Dually, we define the \textbf{meet} (or infimum) of $x$ and $y$, denoted by
\(
x \wedge y,
\)
to be the largest element $s \in P$ such that $s \leq x$ and $s \leq y$.
Thus, for any $p,q \in P$, the following implications hold:
\(
x \leq p \text{ and } y \leq p \;\Rightarrow\; x \vee y \leq p,
\)
and
\(
q \leq x \text{ and } q \leq y \;\Rightarrow\; q \leq x \wedge y.
\)
By antisymmetry of the partial order, the join and meet of two elements $x,y \in P$ are unique whenever they exist.
A poset in which every pair of elements admits both a join and a meet is called a \textbf{lattice}.
Observe that in a lattice, every finite subset has both a join and a meet.

A preordered set $(P,\leq,+,0)$ is said to have a \textbf{compatible abelian group structure} if $(P,+,0)$ is an abelian group equipped with a preorder $\leq$ such that
\[
a \leq b \;\Longrightarrow\; a + c \leq b + c
\quad \text{for all } a,b,c \in P .
\]
\begin{example}
Let \( P = \mathbb{R}^n \) with the \textbf{product (coordinatewise) partial order}
defined by
\(
x = (x_1, \dots, x_n) \leq y = (y_1, \dots, y_n)
\; \text{if and only if} \;
x_i \leq y_i \text{ for all } i = 1, \dots, n.
\)
Then \( (\mathbb{R}^n, \leq) \) is a lattice, where the join and meet are given coordinatewise by
\(
(x \vee y)_i = \max(x_i, y_i), \;
(x \wedge y)_i = \min(x_i, y_i).
\)

\end{example}
\begin{example}
Let
\(
P = \mathbb{Z}^{i_1} \times \mathbb{Q}^{i_2} \times \mathbb{R}^{i_3},
\)
where $i_1,i_2,i_3 \ge 0$ and $i_1 + i_2 + i_3 \ge 1$.
Equipped with the product order and componentwise addition, $P$ is a preordered set with a compatible abelian group structure.
\end{example}

\subsection{Graded projective and graded flat modules}

We refer the reader to \cite{nastasescu2004graded} for the theory of graded rings and graded modules.
For a concise treatment of graded modules in the context of persistence, see Appendix~B of \cite{bubenik2021homological}.
In particular, these references cover graded free modules, graded projective modules, and graded flat modules.
Below, we recall the definitions of graded projective and graded flat modules.
\begin{definition}\label{def}
Let $G$ be a group and \( R = \bigoplus_{g \in G} R_g \) be a graded ring. 
A graded \( R \)-module \( P = \bigoplus_{g \in G} P_g \) is said to be \textbf{graded projective} 
if it satisfies any of the following equivalent conditions:

\begin{enumerate}
   \item For every graded epimorphism \( f : M \twoheadrightarrow N \), a graded homomorphism 
    \( g : P \to N \) can be \textbf{lifted} through \( f \); that is, there exists a graded homomorphism 
    \( h : P \to M \) such that \( f \circ h = g \).
   \item The functor 
    \(
    \underline{\mathrm{Hom}}_R(P,-)
    \)
    from the category of graded \( R \)-modules to itself is \textbf{exact}.
    \item There exists a graded free \( R \)-module \( F \) such that 
    \( P \) is a \textbf{graded direct summand} of \( F \); that is,
    there exists a graded submodule \( Q \subseteq F \) with 
    \(
    F = P \oplus Q.
    \)
\end{enumerate}
\end{definition}
\begin{definition}\cite{bubenik2021homological}
    A graded module $M$ is $\otimes_{\mathrm{gr}}$-flat if $-\otimes_{\mathrm{gr}} M$ is an exact functor.

\end{definition}

\subsection{Persistence modules as graded modules}\label{sub23}
We now recall how persistence modules can be viewed as graded modules.
See \cite{bubenik2021homological} for a detailed discussion of this correspondence.
\paragraph{Monoid ring:}
Let $(P,\leq,+,0)$ be a preordered set with a compatible abelian group structure, and let $U_0 \subseteq P$ denote the upset of nonnegative elements.
Then $(U_0,+,0)$ is a commutative monoid.
For a principal ideal domain $R$, let $R[U_0]$ denote the corresponding monoid ring (see~\cite{bubenik2021homological}); this construction is analogous to that of a group ring.

An arbitrary element of $R[U_0]$ can be written as a finite formal sum
\(
\sum_{s \in U_0} c_s t^s,
\)
where $c_s \in R$ and only finitely many coefficients $c_s$ are nonzero.
Since both $R$ and $U_0$ are commutative, the ring $R[U_0]$ is commutative.

A grading on $R[U_0]$ is defined by declaring $\deg(c_s t^s)=s$.
Thus,
\[
R[U_0] = \bigoplus_{s \in P} (R[U_0])_s,
\]
where $(R[U_0])_s \cong R$ if $s \ge 0$ and $(R[U_0])_s = 0$ otherwise.

\paragraph{Graded module structure of a persistence module:}

Let $M \colon \mathbf{P} \to \mathbf{A}$ be a persistence module, viewed as a functor as defined earlier.
We define a left $R[U_0]$--action on $M$ as follows.
For $m \in M_i$ and $s \in U_0$, set
\[
t^s \cdot m := M_{i \le i+s}(m),
\]
where $t^s$ denotes the generator of $R[U_0]$ in degree $s$.
This action extends $R$--linearly and endows $M$ with the structure of a graded left $R[U_0]$--module. Further the same action induces right $R[U_0]$-module structure on $M$.
Conversely, given a graded left $R[U_0]$--module structure on $M$, multiplication by $t^s$ induces natural maps
\(
M_i \xrightarrow{\; t^s \;} M_{i+s},
\)
which define a persistence module structure on $M$.

Moreover, a natural transformation between persistence modules (viewed as functors) corresponds precisely to a graded $R[U_0]$--module homomorphism.
Consequently, this correspondence induces an equivalence of categories between the category of functors
$\mathbf{P} \to \mathbf{A}$ and the category of graded $R[U_0]$--modules. This correspondence has been noted by several authors: in the case of $P=\mathbb{R}^n$ in \cite{lesnick2019interactive}, for $P=\mathbb{Z}^n$ in \cite{zomorodian2005computing}, for partially ordered abelian groups in \cite{miller2020posets}, and for general preordered sets in \cite{bubenik2021homological}.
Bubenik and Milićević proved that, under suitable conditions, a graded $R[U_0]$--module $M$ is free whenever it is projective; see Corollary~2.20 of~\cite{bubenik2021homological}.
We conclude this subsection by providing an alternative definition of the tensor product of two persistence modules, viewed as $P$--graded $R[U_0]$-modules.

\textbf{Alternate definition:  }
    Let $M$ and $N$ be $P$-graded $R[U_0]$-modules. The $P$-graded abelian group $M \otimes_{\mathrm{gr}} N$ by setting
\[
(M \otimes_{\mathrm{gr}} N)_r := \operatorname{colim}_{s+t \le r} (M_s \otimes_R N_t).
\]

\textit{\textbf{For Section \ref{sec3} and Section \ref{sec4} we assume $M$ is a persistence module with the indexing set $P$, where $P$ is a lattice with a compatible abelian group structure. For Section ~\ref{sec3} and Section ~\ref{sec4} (excluding Subsection \ref{sec7}), we assume that $M$ is graded projective with $M_a$, for $a\in P$, is finitely generated $R$-module.}} With this hypothesis there exists a graded free $R[U_0]$-module $F$ and a graded $R[U_0]$-module $N$ such that
\(
F \cong M \oplus N .
\)
Since $F$ is a graded free $R[U_0]$-module, let $\{u_\eta\}_{\eta\in\Lambda}$ be a basis of homogeneous elements of $F$.
\section{Some necessary results}\label{sec3}
We define a natural concept of \emph{`indecomposable element'} in this section. 
A set of indecomposable elements yields a basis for the graded $R[U_0]$-module $M$ (see Proposition \ref{15}), 
which proves that $M$ is a free module. 
In this section, we show the existence of indecomposable elements (see Corollary~\ref{9}) 
and obtain the finiteness of the cardinality of linearly independent 
indecomposable elements of degrees below a certain degree (see Proposition~\ref{14}).

As a \(P\)-graded free \(R[U_0]\)-module \(F\), we have
\[
F_i = M_i \oplus N_i \quad \text{for all } i\in P.
\]
Since each \(M_i\) is a finitely generated projective \(R\)-module,
it follows that \(M_i\) is a free \(R\)-module with finitely many
basis elements, applying the structure theorem for finitely generated modules over a PID.

Thus, for some \(i\in P\) with \(M_i\neq 0\), let
\(
x_1^i,x_2^i,\dots,x_n^i
\)
be an \(R\)-basis of \(M_i\).
\newpage
\begin{lemma}\label{7}
For $k \leq i$, the map $f_{k,i}:M_k \to M_i$ is injective.
\end{lemma}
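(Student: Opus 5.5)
Since $M$ is graded projective, it is a graded direct summand of the graded free module $F\cong M\oplus N$. The transition maps of $M$ are then the restrictions of the transition maps of $F$. Indeed, $f_{k,i}$ is multiplication by $t^{i-k}$ on $M_k$, and the $R[U_0]$-action on $M$ is the restriction of the action on $F$, because $M$ is a graded submodule of $F$ (via the inclusion in the decomposition $F=M\oplus N$). So it suffices to show that multiplication by $t^{s}$, for $s=i-k\ge 0$, is injective from $F_k$ to $F_i$. Injectivity then restricts to $M_k\subseteq F_k$.

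To check injectivity on $F$, I use the homogeneous basis $\{u_\eta\}_{\eta\in\Lambda}$ with $\deg u_\eta=d_\eta$. Each graded piece is
\[
F_k=\bigoplus_{\eta:\, d_\eta\le k} R\, t^{k-d_\eta}u_\eta ,
\]
because $(R[U_0])_{k-d_\eta}\cong R$ exactly when $k-d_\eta\ge 0$ and is zero otherwise. An element of $F_k$ is therefore a finite sum $\sum_\eta c_\eta t^{k-d_\eta}u_\eta$ with $c_\eta\in R$. Multiplying by $t^s$ gives $\sum_\eta c_\eta t^{i-d_\eta}u_\eta$, using $t^s t^{k-d_\eta}=t^{s+k-d_\eta}$ in the monoid ring. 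Here $i-d_\eta=(k-d_\eta)+s\ge 0$ by compatibility of the order with addition. If this product is zero, freeness of $F$ forces each coefficient $c_\eta t^{i-d_\eta}$ to vanish in $R[U_0]$. Since $t^{i-d_\eta}$ is a monoid basis element, this gives $c_\eta=0$ for all $\eta$, so the original element is zero. Hence $t^s\colon F_k\to F_i$ is injective.

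The one point needing care is the identification $(R[U_0])_s\cong R$, generated freely by $t^s$. The elements $t^{a}$ for distinct $a\in U_0$ are distinct basis elements of the monoid ring, and $t^s t^{a}=t^{s+a}$ is again a basis element. So multiplication by $t^s$ maps $R\,t^a$ isomorphically onto $R\,t^{a+s}$, and no cancellation can occur because $a\mapsto a+s$ is injective in the group $P$. In a general preorder, $a\le b\le a$ with $a\ne b$ might seem problematic, but here $P$ is a lattice and hence a poset, so degrees are well defined and this causes no trouble. No lattice-specific argument beyond this is needed; the main (mild) obstacle is just this bookkeeping about the monoid ring.
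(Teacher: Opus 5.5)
Your proof is correct and is essentially the paper's argument: you show that multiplication by $t^{i-k}$ is injective from $F_k$ to $F_i$ using the homogeneous basis $\{u_\eta\}$ and the injectivity of $t^{p}\mapsto t^{p+(i-k)}$ on monoid-ring basis elements, then restrict to the summand $M_k\subseteq F_k=M_k\oplus N_k$. Your closing claim that the poset property is needed for degrees to be well defined is unnecessary: the grading is indexed by elements of the group $P$ and $a\mapsto a+s$ is injective, so the same argument works for any preordered $P$ with a compatible abelian group structure.
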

\begin{proof}
    Let \( z \in F_k \) and write \( z = \sum_{\eta \in I_z} c_\eta u_\eta \), $c_\eta \in R[U_0]$. Here \(I_z\) denotes an index set, which is a subset of \(\Lambda\),
consisting of all indices \(\eta\) such that \(c_\eta\neq 0\) in the
representation of \(z\) as an \(R[U_0]\)-linear combination of the basis \(\{u_\eta\}\).
  
If \( f_{k,i}(z) = 0 \), then
\(
\sum c_\eta t^{\,i-k} u_\eta = 0,
\)
which implies each \( c_\eta t^{\,i-k} = 0 \).  
Writing each \( c_\eta = \sum r_p t^p \) as an element of \( R[U_0] \), we get
\(
\sum r_p t^{\,p + (i-k)} = 0.
\)
Since $t^p$'s are distinct, so $t^{p+(i-k)}$'s are also distinct. It follows from the above equation that \( r_p = 0 \) for all \( p \), hence \( c_\eta = 0 \) for all \( \eta \).  
Thus \( z = 0 \), and so the kernel of \( f_{k,i} \) is trivial.  
Therefore, the map \( F_k \to F_i \) is one-to-one.

 Let \( w \) be any element in \( M_k \) such that \( f_{k,i}|_{M_k}(w) = 0 \).  
Since \( w \in F_k \) and \( F_k = M_k \oplus N_k \) as graded, we can write 
\[
f_{k,i}(w) = f_{k,i}(w + 0) = f_{k,i}|_{M_k}(w) + f_{k,i}|_{N_k}(0) = 0 + 0 = 0.
\]
But \( F_k \to F_i \) is one-to-one, hence \( w = 0 \).  
This implies that the kernel of \( f_{k,i}|_{M_k} \) is also trivial, so the map \( M_k \to M_i \) is injective.

\end{proof}
Similar to the notion of the birth of an element in a persistence module over $R$ 
(see Section~3.2, \cite{dey2022computational}), we define the birth set of an element $x$ 
in $M$ and in $F$ separately.

\begin{definition}
    Let $x \in M_i$. The \emph{birth set} of $x$ is defined as
\[
S^M_x = \{ \ell \leq i : M_\ell \ \text{has a preimage of } x\}.
\]
Similarly, one defines
\[
S^F_x = \{ \ell \leq i : F_\ell \ \text{has a preimage of } x\}.
\]
\end{definition}

Note that \( S_x^M \) and \( S_x^F \) are both nonempty subsets of \( P \), since \( i \) belongs to each of them. First, we show that these two sets are equal.

\begin{proposition}\label{1}
The two subsets of $P$, $S_x^M$ and $S_x^F$, are equal; that is, $S_x^M = S_x^F$.

\end{proposition}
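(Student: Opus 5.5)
The plan is to prove the two inclusions separately, using that $M$ is a graded direct summand of $F$. Write $\pi\colon F\to M$ for the graded projection along $N$ coming from $F\cong M\oplus N$. Since this decomposition is one of graded $R[U_0]$-modules, $\pi$ is a graded homomorphism. Hence $\pi$ commutes with the structure maps: $\pi\circ f_{\ell,i}=f_{\ell,i}\circ\pi$ on $F_\ell$, because $f_{\ell,i}$ is multiplication by $t^{\,i-\ell}$. Moreover $\pi$ restricts to the identity on $M$.

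The inclusion $S_x^M\subseteq S_x^F$ is immediate. If $\ell\le i$ and $y\in M_\ell$ satisfies $f_{\ell,i}(y)=x$, then $y\in M_\ell\subseteq F_\ell$. The structure maps of $M$ are the restrictions of those of $F$, since $M$ is a graded submodule. So $y$ is a preimage of $x$ in $F_\ell$, and $\ell\in S_x^F$.

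For the reverse inclusion, take $\ell\in S_x^F$ and $z\in F_\ell$ with $f_{\ell,i}(z)=x$. Decompose $z=z_M+z_N$ with $z_M=\pi(z)\in M_\ell$ and $z_N\in N_\ell$. Applying $f_{\ell,i}$ gives
\[
x = f_{\ell,i}(z_M)+f_{\ell,i}(z_N),
\]
where the first term lies in $M_i$ and the second in $N_i$, since both are graded submodules. Because $x\in M_i$ and $F_i=M_i\oplus N_i$, uniqueness of the decomposition forces $f_{\ell,i}(z_N)=0$ and $f_{\ell,i}(z_M)=x$. Equivalently, apply $\pi$ and use $\pi(x)=x$. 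Thus $z_M\in M_\ell$ is a preimage of $x$, and $\ell\in S_x^M$. As an aside, Lemma \ref{7} (injectivity of $F_\ell\to F_i$) then also gives $z_N=0$, so $z=z_M$, although this is not needed for the statement.

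The only point that needs care is that the decomposition $F=M\oplus N$ is compatible with the $R[U_0]$-action, i.e.\ that $N$ is closed under multiplication by $t^s$. This holds because the splitting is one of graded $R[U_0]$-modules, as in Definition \ref{def}(3). I do not expect any real obstacle beyond stating this compatibility explicitly; the argument is otherwise a direct diagram chase.
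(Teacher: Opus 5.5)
Your proof is correct and follows essentially the same route as the paper. The inclusion $S_x^M\subseteq S_x^F$ comes from $M_\ell\subseteq F_\ell$. For the reverse, you split a preimage as $z=z_M+z_N$ and use $M_i\cap N_i=\{0\}$, equivalently uniqueness in $F_i=M_i\oplus N_i$, to get $f_{\ell,i}(z_M)=x$; your explicit remark that $N$ is an $R[U_0]$-submodule, so that $f_{\ell,i}(z_N)\in N_i$, is exactly what the paper uses implicitly.
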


\begin{proof}
Let \( l \in S_x^M \). Then there exists \( y \in M_l \) such that \( f_{l,i}(y) = x \).  
Since \( F_a = M_a \oplus N_a \) for all \( a \in P \), we have \( M_l \subseteq F_l \), and under the same map, \( y \) goes to \( x \) from \( F_l \) to \( F_i \).  
Hence \( l \in S_x^F \), that is, \( S_x^M \subseteq S_x^F \). \hfill (1)

On the other hand, let \( l' \in S_x^F \). Then there exists \( y' \in F_{l'} \) such that \( f_{l',i}(y') = x \).  
We can write \( y' = z' + w' \) with \( z' \in M_{l'} \) and \( w' \in N_{l'} \).  
Now \( f_{l',i}(z' + w') = x \) implies \( f_{l',i}(w') = x - f_{l',i}(z') \), which belongs to \( N_i \) (from the LHS) and to \( M_i \) (from the RHS).  
This means \( f_{l',i}(w') \in M_i \cap N_i = \{0\} \).  
Hence \( x - f_{l',i}(z') = 0 \), and we obtain an element \( z' \in M_{l'} \) mapping to \( x \) in \( M_i \) under the same map.  
Therefore \( l' \in S_x^M \), i.e., \( S_x^F \subseteq S_x^M \). \hfill (2)

From (1) and (2), it follows that \( S_x^M = S_x^F \).

\end{proof}

Since \( 0 \neq x \in M_i \subseteq M \subseteq F \), we can write
\[
x = \sum_{\eta \in I_x} c_\eta t^{i-|u_\eta|} u_\eta,
\]
where $I_x$ is a subset of $\Lambda$, such that \( c_\eta \neq0 \;(\in R) \) for all $\eta\in I_x$ and $|u_\eta|$ is degree of $u_\eta$.
Let \(m\) be the join of $\{\deg(u_\eta):\eta\in I_x\}$.

We note down the following obvious observation.
\begin{remark}\label{2}
For all $\eta \in I_x$, $|x|\ge |u_\eta|.$ 
\end{remark}
We are now ready to obtain the minimal element (it will be unique) of $S_x^F=S_x^M$. First we prove the following result about $S_x^F.$

\begin{proposition}\label{3}
    For any \(l\in P\), \(l\in S_x^F, \text{ implies } i \ge l\ge m\).

\end{proposition}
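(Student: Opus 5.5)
The inequality $l \le i$ is immediate from the definition of $S_x^F$, since every element of the birth set is by definition $\le i$. The substance is $l \ge m$, where $m = \bigvee\{|u_\eta| : \eta \in I_x\}$. Since $P$ is a lattice, $m$ is the least upper bound of the finitely many degrees $|u_\eta|$, $\eta\in I_x$. It therefore suffices to show that $l \ge |u_\eta|$ for every $\eta \in I_x$; the defining property of the join then gives $l \ge m$.

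So fix $l \in S_x^F$ and choose $y \in F_l$ with $f_{l,i}(y) = t^{i-l}y = x$. I will expand $y$ in the homogeneous basis $\{u_\eta\}_{\eta\in\Lambda}$ of $F$. Because $y$ is homogeneous of degree $l$ and each $u_\eta$ is homogeneous, I can write
\[
y = \sum_{\eta \in I_y} d_\eta\, t^{\,l-|u_\eta|} u_\eta ,
\]
with $0 \neq d_\eta \in R$. This form is available because $(R[U_0])_s$ is zero unless $s \ge 0$ and is $R\,t^s$ when $s\ge 0$. In particular $|u_\eta| \le l$ for each $\eta \in I_y$, exactly as in Remark~\ref{2}. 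Multiplying by $t^{i-l}$ gives
\[
x = \sum_{\eta\in I_y} d_\eta\, t^{\,i-|u_\eta|}u_\eta .
\]
I compare this with the given expansion $x = \sum_{\eta\in I_x} c_\eta t^{i-|u_\eta|}u_\eta$. By uniqueness of coordinates with respect to the basis $\{u_\eta\}$, the coefficients in $R[U_0]$ must agree for each $\eta$. Since $t^{i-|u_\eta|}$ is a single monomial and $R$ is a domain, $c_\eta t^{i-|u_\eta|} = d_\eta t^{i-|u_\eta|}$ forces $c_\eta = d_\eta$, as in the argument of Lemma~\ref{7}. Hence $I_x = I_y$, and every $\eta\in I_x$ satisfies $|u_\eta| \le l$.

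The step requiring the most care is this uniqueness comparison. I must make sure that the coefficient of $u_\eta$ in a homogeneous element of degree $i$ is a single monomial $c\,t^{i-|u_\eta|}$. I must also make sure that $c\,t^{s} = d\,t^{s}$ in $R[U_0]$ implies $c=d$. Both follow from the freeness of $F$ over $R[U_0]$ and from the fact that $R[U_0]$ is free over $R$ on the monomials $t^s$. Once $|u_\eta|\le l$ holds for all $\eta\in I_x$, the join property in the lattice $P$ yields $m \le l$, which completes the proof of $i\ge l\ge m$.
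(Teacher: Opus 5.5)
Your proposal is correct and follows essentially the same route as the paper. You expand a preimage $y\in F_l$ in the homogeneous basis, multiply by $t^{i-l}$, compare coefficients with the expansion of $x$ to get $I_x=I_y$ and hence $|u_\eta|\le l$ for all $\eta\in I_x$, and conclude with the join property; your justification of the coefficient comparison, via the freeness of $R[U_0]$ over $R$ on the monomials $t^s$, makes explicit a step the paper leaves implicit.
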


\begin{proof}
    Let \(l\in S_x^F\). Then there exists \(y\in F_l\) with \(f_{l,i}(y)=x\).
Write \(
y=\sum_{\eta\in I_y}d_\eta t^{l-|u_\eta|}u_\eta\), \(d_\eta \in R.
\)
By the Remark \ref{2} applied to \(y\) we have \(\deg(u_\eta)\le \deg(y)=l\) for all \(\eta\in I_y\).
Also
\(
t^{\,i-l}\sum_{\eta\in I_y}  d_\eta t^{l-|u_\eta|}u_\eta \;=\; \sum_{\eta\in I_x} c_\eta t^{i-|u_\eta|} u_\eta.
\)
Equality of the two sums implies \(I_x=I_y\), which in turn implies that
\(\deg(u_\eta)\le l\) for all \(\eta\in I_x\). Hence \(m=\text{join of }\{\deg(u_\eta):\eta\in I_x\}\), which is \(\le l\).

\end{proof}
We now obtain the minimal element of the set $S_x^F.$
\begin{lemma}\label{4}
    The minimal element of $S_x^F$ is $m.$
\end{lemma}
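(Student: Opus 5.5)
By Proposition \ref{3}, every $l\in S_x^F$ satisfies $l\ge m$. So it suffices to prove that $m\in S_x^F$. Then $m$ is the least element of $S_x^F$, and in particular its unique minimal element: if $s\in S_x^F$ and $s\le m$, then $m\le s$ as well, and antisymmetry in the lattice $P$ gives $s=m$.

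To show $m\in S_x^F$, we write an explicit preimage. Start from the expansion
\[
x=\sum_{\eta\in I_x} c_\eta t^{\,i-|u_\eta|}u_\eta .
\]
By Remark \ref{2}, each $|u_\eta|\le i$, so the join satisfies $m\le i$. For each $\eta\in I_x$ we have $|u_\eta|\le m$ by the definition of the join, so $m-|u_\eta|\in U_0$ and $t^{\,m-|u_\eta|}$ is an element of $R[U_0]$. Define
\[
y=\sum_{\eta\in I_x} c_\eta t^{\,m-|u_\eta|}u_\eta\in F_m .
\]
Using $(m-|u_\eta|)+(i-m)=i-|u_\eta|$, we get
\[
f_{m,i}(y)=t^{\,i-m}y=\sum_{\eta\in I_x} c_\eta t^{\,i-|u_\eta|}u_\eta=x .
\]
Hence $m\in S_x^F$. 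By Proposition \ref{1} we also have $m\in S_x^M$, so $y$ can be replaced by a preimage lying in $M_m$; by Lemma \ref{7} this preimage is unique.

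The one point that needs care is that $I_x$ must be finite, so that the join $m$ exists in the lattice. This holds because $x$ is a finite $R[U_0]$-combination of basis elements. Beyond that, the argument only uses the compatibility of the order with addition, which ensures $m-|u_\eta|\ge 0$ and that exponents add correctly. There is no substantial obstacle: the content of the lemma is the lower bound from Proposition \ref{3} together with the explicit construction of $y$.
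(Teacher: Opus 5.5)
Your proposal is correct and is essentially the paper's own proof. You exhibit the preimage $\sum_{\eta\in I_x} c_\eta t^{\,m-|u_\eta|}u_\eta\in F_m$ of $x$, using $|u_\eta|\le m\le i$, to get $m\in S_x^F$, and then combine the lower bound of Proposition~\ref{3} with antisymmetry to conclude; your closing aside about a preimage in $M_m$ via Proposition~\ref{1} and Lemma~\ref{7} is not needed here but is harmless.
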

    \begin{proof}
       Let $x\in M_i.$ Since \(i\ge m\ge |u_\eta| \;\forall \eta\), we can rewrite
\[
x=\sum_{\eta\in I_x} c_\eta t^{i-|u_\eta|} u_\eta
   = \sum_{\eta\in I_x} c_\eta t^{i-m+m-|u_\eta|} u_\eta
   = t^{i-m}\cdot\sum_{\eta\in I_x} c_\eta t^{\,m-|u_\eta|} u_\eta,
\]
so \(m\in S_x^F\).

We will prove \(m\) is the minimum of \(S_x^F\). If there exists some \(m'\in S_x^F\) (with \(m'\le m\)), then by Proposition \ref{3} we have \(m\le m'\); by antisymmetry of the lattice \(P\) it follows that \(m=m'\). Hence the claim.

    \end{proof}
    Combining Proposition \ref{1} and Lemma \ref{4}, we get the following corollary about the minimal element of the set $S_x^M.$
\begin{Corollary}\label{5}
   The minimal element of \(
S_x^M\) is $m.$
\end{Corollary}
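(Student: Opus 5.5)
The corollary follows by transporting Lemma~\ref{4} across the set equality of Proposition~\ref{1}; the argument is purely set-theoretic once those two results are in hand.

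First, for a fixed nonzero $x \in M_i$, Proposition~\ref{1} gives $S_x^M = S_x^F$ as subsets of $P$. The notion of a minimal element depends only on the subset and the order of $P$, not on whether the subset came from $M$ or from $F$. So any statement about minimal elements of $S_x^F$ holds verbatim for $S_x^M$.

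Next, Lemma~\ref{4} gives two facts about $m$, the join of $\{\deg(u_\eta) : \eta \in I_x\}$:
\begin{itemize}
\item $m \in S_x^F$, witnessed by $x = t^{i-m}\sum_{\eta\in I_x} c_\eta t^{m-|u_\eta|}u_\eta$;
\item by Proposition~\ref{3}, every $l \in S_x^F$ satisfies $l \ge m$.
\end{itemize}
Hence $m$ is the least element of $S_x^F$, and so also its unique minimal element. Rewriting $S_x^F$ as $S_x^M$ gives $m \in S_x^M$ and $l \ge m$ for all $l \in S_x^M$. Therefore $m$ is the minimal (indeed the minimum) element of $S_x^M$, and it is unique by antisymmetry of the lattice order.

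The one point needing care is that $m \in S_x^M$ should mean there is a preimage of $x$ lying in $M_m$ itself, not merely in $F_m$. The witness $\sum_{\eta\in I_x} c_\eta t^{m-|u_\eta|}u_\eta$ from Lemma~\ref{4} lies a priori only in $F_m$. This is handled by the second half of the proof of Proposition~\ref{1}: decompose the $F_m$-preimage as $z' + w'$ with $z' \in M_m$ and $w' \in N_m$. The image of $w'$ in degree $i$ lies in $M_i \cap N_i = 0$, so $z'$ already maps to $x$. This is the only nontrivial step, and it is settled by citing Proposition~\ref{1}.
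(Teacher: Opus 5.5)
Your proposal is correct and follows the paper's route exactly: the paper obtains the corollary by combining the set equality $S_x^M=S_x^F$ of Proposition~\ref{1} with Lemma~\ref{4}, whose least-element property comes from Proposition~\ref{3}. Your extra remark about the witness lying a priori in $F_m$ rather than $M_m$ is already covered by Proposition~\ref{1}, so it is a correct elaboration rather than an additional step.
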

We now obtain a minimal element of the union $\bigcup_{0\neq x\in M_i}S_x^M.$ First, we fix some definitions.

\begin{definition}
    Let \(x\in M_i\).  
Then we can write
\(
x = \sum_{\eta\in I_x} c_\eta t^{i-|u_\eta|} u_\eta.
\)
We call this expression the representation of \(x\) in terms of the basis elements \(u_\eta\).
\end{definition}

Define
\[
\mathcal{U}^a = \{\, u_\eta : u_\eta \text{ appears in the representation of } x_a^i \,\}, \] 
where $\{x^i_a\}_{a=1}^n$ is an $R$ basis of $M$ and \(\mathcal{U} = \bigcup_{a=1}^n \mathcal{U}^a.
\)
Let \( T \) be the set of join of degrees of all elements of the power set of \( \mathcal{U} \).  
Define
\[
T_1 = T \cap S, \text{ where } S = \bigcup_{x \in M_i,\, x \neq 0} S^M_x.
\]

Note that any nonzero \( x \in M_i \) can be written as a finite sum \( x = \sum \gamma_a x_a^i = \sum \gamma_a \sum c_\eta t^{i-|u_\eta|} u_\eta \) and  
the set \( T_1 \) is nonempty (since join of degrees of  \(\mathcal {U}^a \in T\cap S\) for all \(a\in \{1,2,\dots, n\}\)) and finite. Let \( \mu \) be one of its minimal elements.  
The following result yields a minimal element of $S$.
\begin{proposition}\label{6}
  The element \( \mu  \) is a minimal element of \(S\).
\end{proposition}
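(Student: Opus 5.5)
Since \(\mu \in T_1 \subseteq S\), the only thing to prove is minimality. Suppose \(s \in S\) with \(s \le \mu\); we must show \(s = \mu\). The plan is to show that such an \(s\) is itself a join of degrees of a subset of \(\mathcal{U}\), so that \(s \in T \cap S = T_1\). Minimality of \(\mu\) inside the finite set \(T_1\) then forces \(s = \mu\).

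First, since \(s \in S\), there is a nonzero \(x \in M_i\) with \(s \in S_x^M\). By Corollary \ref{5}, the minimal element of \(S_x^M\) is \(m_x\), the join of \(\{|u_\eta| : \eta \in I_x\}\). Proposition \ref{3} gives \(m_x \le s\). Moreover \(m_x \in S_x^M \subseteq S\), as shown in the proof of Lemma \ref{4}. So \(m_x \le s \le \mu\) with \(m_x \in S\), and it suffices to show \(m_x \in T\): then \(m_x \in T_1\), minimality gives \(m_x = \mu\), and antisymmetry gives \(s = \mu\).

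Next we show \(I_x\) indexes a subset of \(\mathcal{U}\). Write \(x = \sum_a \gamma_a x_a^i\) with \(\gamma_a \in R\). Substitute the representation of each \(x_a^i\) in the basis \(\{u_\eta\}\) and collect terms. By uniqueness of coordinates in the free module \(F\) with homogeneous basis \(\{u_\eta\}\), each \(u_\eta\) with nonzero coefficient in \(x\) appears in some \(\mathcal{U}^a\). Hence \(\{u_\eta : \eta \in I_x\} \subseteq \mathcal{U}\). This set is nonempty because \(x \neq 0\), so it is an element of the power set of \(\mathcal{U}\). Its degree-join \(m_x\) therefore lies in \(T\).

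The step needing the most care is this cancellation bookkeeping. Coefficients of the \(x_a^i\) may cancel, so \(I_x\) can be a proper subset of \(\bigcup_a I_{x_a^i}\). We only need the inclusion, not equality, so cancellation is harmless. We must also check that the degree-\(i\) coefficient of each \(u_\eta\) is a scalar times \(t^{i-|u_\eta|}\), so that coordinates compare cleanly. This holds because every element involved is homogeneous of degree \(i\), and \(R\)-linear combinations preserve that form.
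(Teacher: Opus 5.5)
Your proof is correct and follows the paper's argument. You pass from \(s\) to the minimum \(m_x\) of \(S_x^M\) (the paper's \(\mu''\)), check that it lies in \(T \cap S = T_1\), and conclude by minimality of \(\mu\) in \(T_1\) together with antisymmetry. Your version is slightly cleaner in two places: you apply minimality to the chain \(m_x \le s \le \mu\), which is what the paper's step ``\(\mu\) is minimal, so \(\mu \le \mu''\)'' tacitly needs, and you justify the inclusion \(\{u_\eta : \eta \in I_x\} \subseteq \mathcal{U}\), which the paper only asserts.
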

\begin{proof}
    As \( T_1 \subseteq S \), we have \( \mu \in S \).  
Suppose there exists \( \mu' \in S \) with \( \mu' \le \mu \).  \hfill (1)

Then there exists some \( 0 \neq x' \in M_i \) such that \( \mu' \in S^M_{x'} \).  
By Corollary \ref{5}, there exists 
\(
\mu'' = \min S^M_{x'} = \text{join of } \{|u_\eta|:\eta \in I_{x'}\}.
\)
Thus \( \mu'' \le \mu' \).  \hfill (2)

Moreover, \( \mu'' \in T \), since the set \( \{ u_\eta : \eta \in I_{x'} \} \) is in the power set of \(\mathcal{U} \).  
Also, it is the minimal element of \( S^M_{x'} \), so \( \mu'' \in S \).  
Hence \( \mu'' \in S \cap T = T_1 \).  
But we assumed that \( \mu \) is a minimal element of \( T_1 \), so \( \mu \le \mu'' \).  
From inequality 2 and the transitivity of \( P \), we get \( \mu \le \mu' \).  \hfill (3)

From inequalities 1 and 3, and by antisymmetry of \( P \), it follows that \( \mu = \mu' \).  
Hence \( \mu \) is a minimal element of \(S \).

\end{proof}
Using the previous proposition, we now show that the hypothesis of Theorem \ref{0} forces some $M_s$ to be zero, if $P$ has no minimal element.
\begin{proposition}\label{8}
If $s < \mu$, then $M_s = 0$.
\end{proposition}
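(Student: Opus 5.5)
We argue by contradiction. Suppose $s<\mu$ and $M_s\neq 0$, and choose a nonzero $y\in M_s$. We want to produce a nonzero element $x\in M_i$ whose birth set $S^M_x$ contains $s$. That would give $s\in S$ with $s<\mu$, contradicting Proposition \ref{6}, which says $\mu$ is a minimal element of $S$.

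The natural candidate is the image of $y$. Since $s<\mu$ and $\mu\le i$ (every element of $S$ is $\le i$ by definition of the birth sets), transitivity gives $s\le i$. Set $x=f_{s,i}(y)=t^{\,i-s}y\in M_i$. By Lemma \ref{7}, the map $f_{s,i}:M_s\to M_i$ is injective, so $x\neq 0$. By construction $y$ is a preimage of $x$ in $M_s$, hence $s\in S^M_x\subseteq S$.

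It remains to derive the contradiction. We have $s\in S$ and $s\le\mu$. Minimality of $\mu$ in $S$, in the sense of the paper's definition (no $s\in S$ with $s<\mu$, i.e.\ $s\le\mu$ forces $s=\mu$), gives $s=\mu$. This contradicts $s<\mu$, so $M_s=0$.

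The only delicate point is the first step: we must make sure the comparison $s\le i$ holds, so that pushing $y$ forward into $M_i$ makes sense. As noted, this comes from $\mu\in S$ being $\le i$. Everything else is a direct application of Lemma \ref{7} and Proposition \ref{6}.
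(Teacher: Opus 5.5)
Your proof is correct and is essentially the paper's argument: you push a nonzero element of $M_s$ into $M_i$ via the injective map of Lemma~\ref{7}, conclude $s\in S$, and contradict the minimality of $\mu$ from Proposition~\ref{6}. Your justification of $s\le i$ (from $\mu\in S$ and every element of $S$ being $\le i$) spells out what the paper states briefly as ``$s<\mu$ implies $s<i$.''
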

\begin{proof}
If $s < \mu$, then $s < i$. Suppose there exists $k \neq 0$ in $M_s$.  
The image of $k$ under the map $M_s \to M_i$ is nonzero (by Lemma \ref{7}), implying $s \in S$, which contradicts the minimality of $\mu $ in $S$.
\end{proof}
For $r \in P$, define
\[
D_r = \sum_{q < r} \mathrm{Im}(M_q \to M_r),
\]
that is the sum of all images coming from $M_q$ to $M_r$ for $q < r$.  
Note that by the Proposition \ref{8}, $D_\mu = 0$.
Similarly, one can define \( D'_r \) for \( F_r \).

\begin{definition}\label{3.11}
Let $x\in M_i$ for some $i\in P$.
We say that $x$ is \emph{decomposable} if $x\in D_i$,
and \emph{indecomposable} if $x\in M_i\setminus D_i$.
\end{definition}
As a direct consequence of the last proposition, we prove the existence of an indecomposable element in $M$.

\begin{Corollary}\label{9}
There exists $w \le i$ such that $D_w \subsetneq M_w$.
\end{Corollary}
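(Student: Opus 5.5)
The natural choice is $w=\mu$, the minimal element of $S$ produced in Proposition \ref{6}. Since $\mu\in S=\bigcup_{0\neq x\in M_i}S^M_x$ and every element of a birth set $S^M_x$ is by definition $\le i$, we get $\mu\le i$ immediately. It then remains to check the two parts of $D_\mu\subsetneq M_\mu$: that $D_\mu=0$ and that $M_\mu\neq 0$.

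\textbf{Step 1: $D_\mu=0$.} By definition, $D_\mu=\sum_{q<\mu}\mathrm{Im}(M_q\to M_\mu)$. Proposition \ref{8} gives $M_q=0$ for every $q<\mu$, so each summand vanishes and $D_\mu=0$. This was already noted after the definition of $D_r$.

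\textbf{Step 2: $M_\mu\neq 0$.} Because $\mu\in S$, there is a nonzero $x\in M_i$ with $\mu\in S^M_x$. By definition of the birth set, $M_\mu$ then contains a preimage $y$ of $x$ under $f_{\mu,i}$. Since $f_{\mu,i}(y)=x\neq 0$ and the map is linear, $y\neq 0$. Hence $M_\mu\neq 0=D_\mu$, and so $D_\mu\subsetneq M_\mu$.

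\textbf{Main obstacle.} The only delicate point is whether $\mu$ genuinely exists. This depends on $T_1$ being finite and nonempty, which the paper secures via the finiteness of $\mathcal U$: there are finitely many basis elements $x_a^i$, and each has a finite representation. That existence is taken as given from Proposition \ref{6}, so the corollary reduces to the two short observations above. As a consequence, any nonzero element of $M_\mu$ is indecomposable in the sense of Definition \ref{3.11}.
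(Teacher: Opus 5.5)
Your proof is correct and follows the paper's argument: you take $w=\mu$ and get $D_\mu=0$ from Proposition~\ref{8}, and your empty-sum reading covers the case where $\mu$ is minimal in $P$, which the paper treats separately. You also make explicit two points the paper leaves unstated: $\mu\le i$, because every birth set lies below $i$, and $M_\mu\neq 0$, because $\mu$ carries a nonzero preimage of some nonzero $x\in M_i$.
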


\begin{proof}
Take $w = \mu$. Either $\mu$ is a minimal element of $P$, or there is $s<\mu.$ 
In both the cases $D_\mu = 0 \subsetneq M_\mu$, as required.
\end{proof}
In the remaining of this section, we will fix a basis among the indecomposables and will show that the total number of such bases is finite when the degrees are less than a fixed degree $i$ and their images are linearly independent in $M_i$.

We know that for any \( r \in P \), both \( F_r \) and \( M_r \) are \( R \)-modules.  
As image spaces form submodules, the direct sums of image spaces \( D_r' \) and \( D_r \) are also submodules of \( F_r \) and \( M_r \) respectively.  
Hence, one can define the quotient \( R \)-modules \( F_r / D_r' \) and \( M_r / D_r \). And there exists at least some \( r \) for which these quotient modules are nontrivial, by Corollary \ref{9}.

\begin{proposition}\label{10}
Both $F_r / D_r'$ and $M_r / D_r$ are free $R$-modules, provided both $F_r$ and $M_r$ are finitely generated as $R$-modules.
\end{proposition}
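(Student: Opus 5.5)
Over a PID, a finitely generated module is free exactly when it is torsion-free. Since $F_r/D'_r$ and $M_r/D_r$ are finitely generated (as quotients of the finitely generated modules $F_r$ and $M_r$), the plan is to show both quotients are torsion-free and then invoke the structure theorem. I would first treat $F_r/D'_r$ by describing $D'_r$ explicitly, and then transfer the result to $M_r/D_r$ using the splitting $F_r=M_r\oplus N_r$, in the same way as in Proposition \ref{1}.

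\textbf{Step 1: describe $D'_r$.} Every element of $F_r$ has a unique representation $\sum_{\eta} c_\eta t^{r-|u_\eta|}u_\eta$ with $c_\eta\in R$ and $|u_\eta|\le r$. So $F_r$ is $R$-free on the set $\{t^{r-|u_\eta|}u_\eta : |u_\eta|\le r\}$. I claim that $D'_r$ is the $R$-span of those basis vectors with $|u_\eta|<r$.
\begin{itemize}
\item If $|u_\eta|<r$, then $t^{r-|u_\eta|}u_\eta$ is the image of $u_\eta\in F_{|u_\eta|}$, so it lies in $D'_r$.
\item Conversely, take $q<r$ and $y\in F_q$. Every $u_\eta$ appearing in $y$ has $|u_\eta|\le q<r$, so the image of $y$ involves only basis vectors with $|u_\eta|<r$.
\end{itemize}
Hence $F_r/D'_r$ is free on the classes of the $u_\eta$ with $|u_\eta|=r$; equivalently, $D'_r$ is a direct summand of $F_r$.

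\textbf{Step 2: show $M_r\cap D'_r=D_r$.} The inclusion $D_r\subseteq M_r\cap D'_r$ is immediate. For the reverse, let $z\in M_r\cap D'_r$ and write $z=\sum_j f_{q_j,r}(y_j)$ with $q_j<r$ and $y_j\in F_{q_j}$. Split each $y_j=a_j+b_j$ with $a_j\in M_{q_j}$ and $b_j\in N_{q_j}$. Then
\[
z-\sum_j f_{q_j,r}(a_j)=\sum_j f_{q_j,r}(b_j).
\]
The left side lies in $M_r$ and the right side lies in $N_r$, because the structure maps are graded and respect the decomposition $F=M\oplus N$. Since $M_r\cap N_r=0$, both sides vanish, so $z=\sum_j f_{q_j,r}(a_j)\in D_r$. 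The same argument gives $N_r\cap D'_r=D''_r$, the analogous submodule for $N$.

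\textbf{Step 3: conclude for $M_r/D_r$.} Using $f_{q,r}(F_q)=f_{q,r}(M_q)+f_{q,r}(N_q)$, we get $D'_r=D_r\oplus D''_r$. Therefore
\[
F_r/D'_r\cong M_r/D_r\oplus N_r/D''_r.
\]
This exhibits $M_r/D_r$ as a direct summand of the free module $F_r/D'_r$. In particular $M_r/D_r$ is projective, hence torsion-free, and being finitely generated over a PID it is free.

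\textbf{Main obstacle.} The delicate point is Step 1. I must make sure that "$q<r$" correctly captures every basis vector of degree strictly below $r$, including degrees not comparable to $r$ but still $<r$. This relies on uniqueness of coefficients in the free module, the argument used in Lemma \ref{7}. I also need $F_r$ to be finitely generated, which is the stated hypothesis. If $F_r$ is not finitely generated, the argument still shows that $F_r/D'_r$ is free, so only the finiteness of $M_r$ is actually needed.
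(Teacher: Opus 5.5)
Your proof is correct. It rests on the same two facts as the paper's proof:
\begin{itemize}
\item an element of $F_r$ lies in $D'_r$ exactly when its representation involves only those $u_\eta$ with $|u_\eta|<r$;
\item the splitting $F=M\oplus N$ forces $M_r\cap D'_r=D_r$.
\end{itemize}

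\textbf{How the paper uses these facts.} It works elementwise. A nonzero $\bar\alpha\in F_r/D'_r$ has a nonzero coefficient on some $u_\eta$ of degree exactly $r$, and this coefficient survives multiplication by any $g\neq 0$. So $F_r/D'_r$ is torsion-free. The paper then shows that every $\beta\in M_r\setminus D_r$ also has such a coefficient, so $M_r/D_r$ is torsion-free too. It finishes by invoking the structure theorem for finitely generated modules over a PID in both cases.

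\textbf{How your route differs.} You turn the first fact into a structural statement: $D'_r$ is the span of a subset of an $R$-basis of $F_r$. Hence $F_r/D'_r$ is free on the classes of the $u_\eta$ with $|u_\eta|=r$, with no finiteness hypothesis and no appeal to the structure theorem. You then obtain $M_r/D_r$ as a direct summand of $F_r/D'_r$ via $D'_r=D_r\oplus D''_r$. That decomposition already implies your Step 2.

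\textbf{What this buys.} The paper later applies this proposition knowing only that each $M_r$ is finitely generated. Nothing guarantees that $F_r$ is finitely generated there, and your version covers exactly that situation. You could even drop finiteness of $M_r$, since over a PID every submodule of a free module is free.

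\textbf{A small correction to your closing remark.} There are no ``degrees not comparable to $r$ but still $<r$'' to worry about. Degrees incomparable to $r$ never occur in $F_r$, because only $u_\eta$ with $|u_\eta|\le r$ contribute there. The only order-theoretic input in Step 1 is antisymmetry. You need it to conclude $|u_\eta|\neq r$ from $|u_\eta|\le q<r$, and it holds because $P$ is a lattice.
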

\begin{proof}
    Take a nonzero element, say \( \bar{0} \neq \bar{\alpha} \in F_r / D_r' \).  
So \( \alpha \notin D_r' \).  
If possible, suppose there exists \( g \neq 0 \) in \( R \) such that \( g \bar{\alpha} = \bar{0} \); that is, \( g \alpha \in D_r' \).  
Then, as an element of \( F \), we can write 
\(
g\alpha = \sum_{\eta\in I_{g\alpha}} c_\eta t^{r-|u_\eta|}u_\eta, \quad c_\eta \in R,
\)
such that \( |u_\eta| < r \) for all \( \eta\).

Again, as an element of \( F \),
\(
\alpha = \sum_{\eta\in I_\alpha} d_\eta t^{r-|u_\eta|} u_\eta, \quad d_\eta \in R,
\)
and since \( \alpha \notin D_r' \), it follows that \( |u_\eta| = r \) for at least some \( \eta \).  
Now,
\(
g\alpha = \sum_{\eta\in I_{g\alpha} }g d_\eta t^{r-|u_\eta|} u_\eta.
\)
Since $g\neq 0$, $d_\eta\neq 0$, it follows that $g d_\eta \neq 0$ as $R$ is a PID, so the degrees \( |u_\eta| = r \) for those same indices as in the representation of $\alpha$.  
This contradicts the condition that all \( |u_\eta| < r \) for \( g\alpha \in D_r' \).  
Hence, such \( g \) does not exist. Therefore, \( F_r / D_r' \) is torsion-free.  

By the structure theorem for finitely generated modules over a principal ideal domain, \( F_r / D_r' \) is a free \( R \)-module.

We now consider \( M_r / D_r \).  
Let \( \bar{0} \neq \bar{\beta} \in M_r / D_r \), such that \( \beta \in M_r \) but \( \beta \notin D_r \).  
As an element of \( F \),
\(
\beta = \sum_\eta k_\eta t^{r-|u_\eta|}u_\eta, \quad k_\eta \in R.
\)
We claim that for at least some \( \eta \), \( |u_\eta| = r \).  
If possible, suppose \( |u_\eta| < r \) for all \( \eta \).  
We can write \( u_\eta = m_\eta + n_\eta \), where \( m_\eta \in M_{|u_\eta|} \) and \( n_\eta \in N_{|u_\eta|} \).  
Then,
\[
\beta = \sum_\eta k_\eta t^{r-|u_\eta|}(m_\eta + n_\eta) = \sum_\eta k_\eta t^{r-|u_\eta|} m_\eta + \sum_\eta t^{r-|u_\eta|}k_\eta n_\eta.
\]
So
\(
\beta - \sum_\eta k_\eta t^{r-|u_\eta|} m_\eta = \sum_\eta k_\eta t^{r-|u_\eta|} n_\eta
\)
belongs to \( M \cap N = \{0\} \).  
Hence,
\(
\beta = \sum_\eta k_\eta t^{r-|u_\eta|} m_\eta,
\)
where \( |m_\eta| = |u_\eta| < r \) for all \( \eta \), which implies \( \beta \in D_r \), a contradiction.  
Therefore, our claim holds.  

The rest of the argument follows analogously to the case of \( F_r / D_r' \), and thus \( M_r / D_r \) is also a free \( R \)-module.

\end{proof}
Since we have assumed $M_i$ is finitely generated $R$-module for all $i\in P$, we can apply Proposition \ref{10} to give the following definition.
\begin{definition}
   \[ b_r = \text{dim } (M_r / D_r) = \text{dim } M_r - \text{dim } D_r.
\]
\end{definition}

Let us choose \( e_j^r \) of degree \( r \) for \( j = 1, 2, \ldots, b_r \) from \( M_r - D_r \) such that the corresponding \( \{\overline{e_j^r}:j = 1, 2, \ldots, b_r \} \) forms a basis of \( M_r / D_r \).
\begin{proposition}\label{11}
  The set $\{\overline{e_j^r}: j = 1,\ldots,b_r\}$ is linearly independent in $F_r / D_r'$.
\end{proposition}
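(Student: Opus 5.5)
The plan is to show that the natural map $M_r/D_r \to F_r/D'_r$, induced by the inclusion $M_r \subseteq F_r$, is injective. Once that holds, the images of the basis $\{\overline{e_j^r}\}$ of $M_r/D_r$ are automatically linearly independent in $F_r/D'_r$.

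First, the map is well defined. Every summand $\mathrm{Im}(M_q\to M_r)$ with $q<r$ lies in $\mathrm{Im}(F_q\to F_r)$, so $D_r\subseteq D'_r$.

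Next, I argue by contradiction. Suppose $\sum_j \gamma_j \overline{e_j^r}=\bar 0$ in $F_r/D'_r$ with $\gamma_j\in R$ not all zero. Put $\beta=\sum_j\gamma_j e_j^r\in M_r$, so that $\beta\in D'_r$. The key step is to prove
\[
M_r\cap D'_r = D_r .
\]
Take $\beta\in M_r\cap D'_r$ and write $\beta=\sum_{q<r} f_{q,r}(y_q)$ with $y_q\in F_q$, a finite sum. Decompose each $y_q = z_q + w_q$ with $z_q\in M_q$ and $w_q\in N_q$, using $F_q=M_q\oplus N_q$. Then
\[
\beta-\sum_q f_{q,r}(z_q)=\sum_q f_{q,r}(w_q).
\]
The left side lies in $M_r$. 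The right side lies in $N_r$, because $N$ is a graded submodule and so is closed under the structure maps $t^{r-q}$. Since $M_r\cap N_r=0$, both sides vanish, and hence $\beta=\sum_q f_{q,r}(z_q)\in D_r$. This is exactly the projection trick from Proposition~\ref{1} and the second half of Proposition~\ref{10}.

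Alternatively, one can use the basis description from Proposition~\ref{10}: $D'_r$ consists of the elements whose representation involves only $u_\eta$ with $|u_\eta|<r$. Splitting each such $u_\eta=m_\eta+n_\eta$ gives the same conclusion. I would present the cleaner projection argument above.

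Hence $\beta\in D_r$, so $\sum_j\gamma_j\overline{e_j^r}=\bar 0$ in $M_r/D_r$. Because the $\overline{e_j^r}$ form a basis of $M_r/D_r$, which is free by Proposition~\ref{10}, all $\gamma_j=0$. This contradicts the assumption, so the set is linearly independent in $F_r/D'_r$.

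The main point requiring care is the identity $M_r\cap D'_r=D_r$. In particular, one must check that the $N$-components of the preimages map into $N_r$, which uses that the decomposition $F\cong M\oplus N$ is one of graded $R[U_0]$-modules. Everything else is bookkeeping.
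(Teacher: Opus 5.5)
Your proposal is correct and follows essentially the same argument as the paper. The paper also writes the relation as an element of $D'_r$, splits each $F$-preimage into its $M$- and $N$-components, and uses $M_r\cap N_r=0$ to conclude that the relation already holds in $D_r$. You make explicit the identity $M_r\cap D'_r=D_r$, and the fact that $N$ is closed under the structure maps, both of which the paper uses without stating them.
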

\begin{proof}
    For the linearly independent set
\(
\{\overline{e_j^r}\mid j=1,\dots,b_r\}\subset M_r/D_r,
\)
consider a relation in \(F_r/D_r'\):
\(
\sum_j d_j\,\overline{e_j^r}=\overline{0}\ ,\text{ where   }d_j \in R.
\)

This means \(\sum_j d_j e_j^r\in D_r'\). By definition of \(D_r'\), there exist elements \(x_i\in F_{s_i}\) with \(s_i=|x_i|<r\) such that
\[
\sum_{i=1}^{h} t^{\,r-s_i} x_i = \sum_j d_j e_j^r. \tag{1}
\]
Write \(x_i=m_i+n_i\) with \(m_i\in M_{s_i}\) and \(n_i\in N_{s_i}\). Substituting into (1) gives
\[
\sum_{i=1}^h t^{\,r-s_i} n_i \;=\; \sum_j d_j e_j^r \;-\; \sum_{i=1}^h t^{\,r-s_i} m_i.
\]
We have 
\(
\sum_j d_j e_j^r = \sum_{i=1}^h t^{\,r-s_i} m_i \in D_r,
\)
so
\(
\sum_j d_j\,\overline{e_j^r}=\overline{0}\quad\text{in }M_r/D_r.
\)
As \(\{\overline{e_j^r}\}\) is linearly independent in \(M_r/D_r\), we conclude \(d_j=0\) for all \(j\). Therefore \(\{\overline{e_j^r}\mid j=1,\dots,b_r\}\) is also linearly independent in \(F_r/D_r'\).

\end{proof}

Note that since $\{\overline{e_j^r}\}$ is linearly independent in $M_r/D_r$, the set consisting of the union 
\(
\bigcup_{r < i} \bigcup_{j=1}^{b_r} \overline{e_j^r}
\)
is linearly independent in the direct sum $\bigoplus_{r < i} M_r/D_r$. We denote the set
\[
\bigcup_{r<i} \bigcup_{j=1}^{b_r} e_j^r
\]
by \(\mathcal{B}^i\). Define \[
\mathcal{B}=\bigcup_{k \in P} \bigcup_{j=1}^{b_r} \{ e_j^r \}.
\]

Proposition \ref{11} implies the following directly.
\begin{Corollary}\label{12}
  The set   \(
\bigcup_{r < i} \bigcup_{j=1}^{b_r} \overline{e_j^r}
\) is linearly independent in the direct sum 
\(
\bigoplus_{r < i} F_r / D_r'.
\)

\end{Corollary}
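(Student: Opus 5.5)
The statement is a bookkeeping consequence of Proposition \ref{11}. The key observation is that linear independence in an external direct sum can be checked one summand at a time. I would therefore interpret the union
\(
\bigcup_{r<i}\bigcup_{j=1}^{b_r}\overline{e_j^r}
\)
as a subset of \(\bigoplus_{r<i}F_r/D_r'\) via the canonical inclusions. Under this convention, \(\overline{e_j^r}\) sits in the \(r\)-th coordinate and is zero in every other coordinate.

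First, I take an arbitrary finite relation
\[
\sum_{r<i}\sum_{j=1}^{b_r} d_j^r\,\overline{e_j^r}=0 \quad\text{in } \bigoplus_{r<i}F_r/D_r', \qquad d_j^r\in R,
\]
with only finitely many \(d_j^r\) nonzero. An element of an external direct sum vanishes exactly when each of its coordinates vanishes. Projecting onto the \(r\)-th summand therefore gives, for each fixed \(r<i\),
\[
\sum_{j=1}^{b_r} d_j^r\,\overline{e_j^r}=\overline{0}\quad\text{in } F_r/D_r'.
\]
Second, I apply Proposition \ref{11} to each \(r\) separately. The family \(\{\overline{e_j^r}: j=1,\dots,b_r\}\) is linearly independent in \(F_r/D_r'\), so \(d_j^r=0\) for all \(j\). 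Since \(r<i\) was arbitrary, the whole relation is trivial, and the union is linearly independent.

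The only real point of care is the interpretation. If one instead read the union as living inside a single \(F_i\), after pushing the elements forward by \(t^{i-r}\), the claim would be a genuinely different and harder statement. That version would need Lemma \ref{7} together with an argument that images of indecomposables from distinct degrees cannot cancel. Because the statement explicitly names the direct sum \(\bigoplus_{r<i}F_r/D_r'\), I would use the coordinatewise reading. In that reading no obstacle remains beyond noting that elements from different summands do not interact, and that the finiteness of each \(b_r\) (Proposition \ref{10}, using that \(M_r\) is finitely generated) makes each per-degree family finite.
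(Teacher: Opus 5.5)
Your proof is correct and is exactly what the paper means by ``Proposition \ref{11} implies the following directly'': a relation in $\bigoplus_{r<i}F_r/D_r'$ splits into one relation per coordinate, and Proposition \ref{11} kills each of them. Your remark that pushing the elements forward into $F_i$ would give a different and harder claim is also accurate; that claim is the content of Proposition \ref{13}.
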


 We now consider the images of the sets of $\mathcal B^i$ in $M_i.$
\newpage
\begin{proposition}\label{13}
    The image set of \(\mathcal{B}^i\) in \(M_i\), that is
\[
\Big\{\, t^{\,i-r} e_j^r \ \Big|\ r < i,\ 1 \leq j \leq b_r \,\Big\}
\]
is linearly independent in \( M_i \).

\end{proposition}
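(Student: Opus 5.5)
The plan is to argue by contradiction inside the free module $F$, where we can compare coefficients on the homogeneous basis $\{u_\eta\}$. Suppose
\[
\sum_{(r,j)\in J} d_{r,j}\, t^{\,i-r} e_j^r = 0 \quad\text{in } M_i\subseteq F_i,
\]
with $J$ a finite set of pairs, $r<i$, $1\le j\le b_r$, and not all $d_{r,j}\in R$ zero. Let $J_0\subseteq J$ be the pairs with $d_{r,j}\neq 0$. The finite set of degrees $\{r : (r,j)\in J_0\}$ has a maximal element $r_0$ in the partial order of $P$. I will then show that all $d_{r_0,j}$ vanish, which contradicts the choice of $r_0$.

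\textbf{Step 1: split each $e_j^r$ into a top part and a lower part.} Expand $e_j^r$ in $F$ as
\[
e_j^r=\sum_{\eta} c^{r,j}_\eta t^{\,r-|u_\eta|}u_\eta,
\]
where $|u_\eta|\le r$ for every $\eta$ occurring, by Remark~\ref{2}. Set
\[
p_j^r=\sum_{|u_\eta|=r} c^{r,j}_\eta u_\eta,
\qquad
q_j^r=e_j^r-p_j^r .
\]
Every term of $q_j^r$ is $t^{\,r-|u_\eta|}u_\eta$ with $|u_\eta|<r$, so $q_j^r\in D_r'$. Hence $\overline{e_j^r}=\overline{p_j^r}$ in $F_r/D_r'$. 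By Proposition~\ref{11}, the elements $p_j^{r}$, $j=1,\dots,b_{r}$, are therefore $R$-linearly independent in the free $R$-module spanned by $\{u_\eta : |u_\eta|=r\}$. This holds because a relation among the $p_j^r$ would give a relation among the $\overline{e_j^r}$.

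\textbf{Step 2: isolate the coefficients of degree-$r_0$ basis elements.} Expand the relation in terms of the $u_\eta$, and look only at those $u_\eta$ with $|u_\eta|=r_0$. A term $t^{\,i-r}e_j^r$ with $(r,j)\in J_0$ can contribute such a $u_\eta$ only if $r_0\le r$, since $e_j^r$ only involves $u_\eta$ of degree $\le r$. By maximality of $r_0$, this forces $r=r_0$. For $r=r_0$, the lower part $q_j^{r_0}$ contributes only $u_\eta$ of degree $<r_0$. Since the $t^{s}u_\eta$ with distinct $\eta$ are independent in $F$, the degree-$r_0$ part of the relation is
\[
t^{\,i-r_0}\sum_{j} d_{r_0,j}\,p_j^{r_0}=0 .
\]

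\textbf{Step 3: cancel the shift and conclude.} Multiplication by $t^{\,i-r_0}$ is injective on $F$, by the first half of the proof of Lemma~\ref{7}. So $\sum_j d_{r_0,j}p_j^{r_0}=0$. Step~1 then gives $d_{r_0,j}=0$ for all $j$, contradicting the choice of $r_0$ from $J_0$.

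The main obstacle is Step~2, namely justifying that the coefficient comparison is legitimate. One must check that in $F$ the elements $t^{\,i-|u_\eta|}u_\eta$ are $R$-linearly independent, so that coefficients can be read off $\eta$ by $\eta$; this holds because $\{u_\eta\}$ is a homogeneous $R[U_0]$-basis. One must also check that the choice of a maximal degree in a finite subset of the lattice $P$ really excludes every contribution from the other indices. Incomparable degrees $r\neq r_0$ cause no trouble: the argument only needs that a $u_\eta$ of degree $r_0$ can appear in $e_j^r$ only when $r_0\le r$.
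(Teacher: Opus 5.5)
Your proposal is correct and follows essentially the paper's argument: you split each $e_j^r$ into its degree-$r$ part plus an element of $D_r'$, pick a maximal degree, compare coefficients of the basis elements $u_\eta$ of that degree in $F$, cancel the power of $t$, and invoke Proposition~\ref{11} (the paper cites Corollary~\ref{12}). Choosing $r_0$ as maximal among the degrees with nonzero coefficients and deriving a direct contradiction is a slightly tidier version of the paper's iteration (``take $\sigma$ maximal in $S$, show $\lambda^\sigma=0$, remove $\sigma$ and repeat''), and it avoids having to check that $\sigma$ is the degree of some $e_j^r$.
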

\begin{proof}
    
We fix \(r\) and \(j\). We now express 
\begin{equation} \label{eq:ejr}
e_j^r = \sum_{l} c_{jl}^r\, u_l^r + d_j^r,
\end{equation}
where \(u_l^r\) are homogeneous basis elements of \(F\) of degree \(r\),
\(c_{jl}^r \in R\), and \(d_j^r \in D_r'\).
Suppose, if possible, $\{t^{i-r}e_j^r:r<i,\; 1\le j\le b_r\}$ is not linearly independent. then there are finitely many $r,\, j$ and $\lambda_j^r\in R$ such that 
\(
\sum_{r < i} \sum_j \lambda_j^r\, t^{\,i-r} e_j^r = 0 \quad \text{in } F_i.
\)
Substituting from \eqref{eq:ejr} gives
\[
\sum_{r < i} \sum_j \lambda_j^r\, t^{\,i-r} 
\left( \sum_l c_{jl}^r u_l^r + d_j^r \right) = 0.
\]
In the above expression, we also write $d_j^r$ in the representation in terms of the basis elements $\{u_\eta\}$ as $d_j^r=\sum \overline{c}_{jl}^ru_{jr}^k$, where $|u_{jr}^k|<|d_j^r|,\, \overline{c}_{jl}^r\in R[U_0].$
Define 
\[
S = \bigl\{\, \deg(u_l^r),\;\deg(u_{jr}^k) \ \big|\ r,l,j \text{ appear in the above sum} \bigr\}.
\]
This set is finite. Let \(\sigma\) be a maximal element of $S$. 
Note that the $u_\eta$'s present in the representation of each of $d^r_j$ of the corresponding $e^r_j$, where $r<\sigma$ cannot contribute to degree \(\sigma\).

It follows that, for all
\(l,\,
t^{i-\sigma}(\sum_{j} \lambda_j^\sigma\, c_{jl}^\sigma) = 0,\text{ in } R[U_0] \text{, thus } \sum_{j} \lambda_j^\sigma\, c_{jl}^\sigma = 0.
\)
Hence
\(
\sum_{l} \sum_{j} 
\lambda_j^\sigma\,  c_{jl}^\sigma\, u_l^\sigma = 0.
\)
Equivalently,
\(
\sum_{j} \lambda_j^\sigma 
\left( \sum_{l} c_{jl}^\sigma u_l^\sigma \right) = 0
\quad \text{, therefore} \quad
\sum_{j} \lambda_j^\sigma \left( e_j^\sigma - d_j^\sigma \right) = 0\text{   (from \ref{eq:ejr}).}
\)
Thus
\(
\sum_{j} \lambda_j^\sigma\, \overline{e_j^\sigma} = \overline{0}
\quad \text{in} \quad \bigoplus_{r < i} F_r / D_r'.
\)
By the Corollary \ref{12}, this implies \(\lambda_j^\sigma = 0\) for all such \(j\).

We now remove \(\sigma\) from \(S\), and repeat the same argument for a maximal element of \(S-\{\sigma\}\) using the following expression \[
\sum_{r\neq\sigma,r < i} \sum_j \lambda_j^r\, t^{\,i-r} 
\left( \sum_l c_{jl}^r u_l^r + d_j^r \right) = 0.
\] Since \(S\) is finite, after finitely many steps 
we obtain \(\lambda_j^r = 0\) for all \(r < i\) and all \(j\). This shows that the image of \(\mathcal{B}^i\) is linearly independent in \(F_i\).
Since \(M_i\) is a submodule of \(F_i\), the proof is complete.

\end{proof}
We now show that the cardinality of the elements in $\mathcal{B}^i$ is finite. This result will be applied to writing any element $x\in M_i$ `in terms' of $\{e_j^r\}$.
\newpage
\begin{theorem}\label{14}
  The sum \(
\sum_{r<i} b_r 
\) is finite.
\end{theorem}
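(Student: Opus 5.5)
The plan is to bound $\sum_{r<i} b_r$ by the rank of $M_i$, using the linear independence already established in Proposition \ref{13}. By the standing hypothesis, $M_i$ is a finitely generated $R$-module. As observed at the start of Section \ref{sec3}, it is in fact a free $R$-module of some finite rank $n$, being a finitely generated direct summand of $F_i$ and hence torsion-free over the PID $R$.

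First, I would fix an arbitrary finite set $\Sigma\subseteq\{r\in P: r<i,\ b_r>0\}$. I would then consider the finite family $\{t^{\,i-r}e_j^r : r\in\Sigma,\ 1\le j\le b_r\}$. By Proposition \ref{13}, this family is $R$-linearly independent in $M_i$. Its elements are pairwise distinct: two of them coinciding would give a nontrivial relation $1\cdot v-1\cdot v=0$ among distinct index pairs $(r,j)$, which contradicts that proposition. Hence $\sum_{r\in\Sigma}b_r$ is the size of a linearly independent subset of the free module $M_i\cong R^n$.

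Second, I would invoke the standard fact that over a commutative ring (in particular over a PID), a linearly independent subset of $R^n$ has at most $n$ elements. One proof tensors with the fraction field $K$ of $R$. Linear independence over $R$ is preserved in $M_i\otimes_R K\cong K^n$, because any $K$-relation can be cleared of denominators to give an $R$-relation. Dimension counting in $K^n$ then gives the bound, so $\sum_{r\in\Sigma}b_r\le n$.

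Finally, since every finite partial sum is bounded by $n$, the set $\{r<i: b_r>0\}$ must be finite, with at most $n$ elements because each such $r$ contributes at least $1$. Therefore $\sum_{r<i}b_r$ is a finite sum bounded by $\operatorname{rank}_R M_i$.

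The main obstacle is really whether Proposition \ref{13} applies to the whole, possibly infinite, family $\mathcal{B}^i$. Its proof handles only finitely many $(r,j)$ at a time, which is exactly why I pass through arbitrary finite subsets $\Sigma$ rather than treating all of $\mathcal B^i$ at once. A secondary point is making sure the chosen $e_j^r$ for different $r$ are not accidentally identified after applying $t^{\,i-r}$; this is dispatched by the distinctness remark above.
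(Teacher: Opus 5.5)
Your proposal is correct and follows the paper's argument. Both proofs use Proposition \ref{13} to get $R$-linear independence of the family $\{t^{\,i-r}e_j^r\}$ in $M_i$, and then bound its size by $\dim_K(M_i\otimes_R K)<\infty$ over the fraction field $K$. Passing through finite subsets $\Sigma$ and checking distinctness only make explicit what the paper's proof by contradiction leaves implicit, and in return you get the explicit bound $\sum_{r<i}b_r\le\operatorname{rank}_R M_i$.
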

\begin{proof}
    Suppose, to the contrary, that 
\(
\sum_{r<i} b_r = \infty.
\)
Then there exists an infinite linearly independent family 
\(\{\overline{e_j^r}\}_{r<i,\; j\ge1}\) in 
\(\bigoplus_{r<i} M_r/D_r\). By Corollary~\ref{12} this family is also linearly independent in 
\(\bigoplus_{r<i} F_r/D_r'\), and by Proposition~\ref{13} its image 
\(\{t^{\,i-r}e_j^r\}_{r<i,\; j\ge1}\) is linearly independent in \(M_i\). Thus \(\{t^{\,i-r}e_j^r\otimes 1\}_{r<i,\; j\ge1}\) is linearly independent in $M_i\otimes_RF$, where $F$ is the fraction field of $R$. This contradicts the hypothesis that $\dim_F(M_i\otimes F)=\dim_R M_i<\infty.$ Hence 
\(\sum_{r<i} b_r< \infty\).

\end{proof}
For the rest of the article, when we refer to an $R[U_0]$-homomorphism between two graded $R[U_0]$-modules (that is persistence modules), we always consider a graded $R[U_0]$-module homomorphism.  
\section{Basis for the projective module}\label{sec4}
In this section,  we show that $\mathcal{B}=\cup_{r\in P}\cup_{j=1}^{b_r}e_j^r$ gives an $R[U_0]$-basis for $M.$ The first step is to show that every element in a projective module `can be written in terms of indecomposables'. In the next section this result will be applied to prove that a certain family of modules is not projective.
For any $i\in P$, define
\[
M_{\langle i\rangle}=R[U_0]\cdot M_i 
\] to be a free \(R[U_0]\)-module with basis (in degree \(i\)) \(\widetilde{m}_j\), where \(\widetilde{m}_j\) corresponds to a basis element
\(m_j\in M_i\).

Define an $R[U_0]$-module homomorphism
\(
\phi : M_{\langle i\rangle} \longrightarrow M
\)
by
\[
\phi\!\left(t^{\,r-i}\sum_j c_j \widetilde{m}_j\right)
=
t^{\,r-i}\sum_j c_j m_j ,
\]
where \(c_j\in R\).

This induces an $R[U_0]$-module homomorphism
\(
q:\bigoplus_{i \in P} M_{\langle i\rangle}\longrightarrow M
\)
given by coordinatewise sum.  
Then $q$ is onto, since every $x\in M$ is the image of the element
\[
0+\cdots+0+x+0+\cdots\in \bigoplus_{i\in P} M_{\langle i\rangle}
\]
where $x$ appears in the $M_{\langle i\rangle}$–coordinate.

As $M$ is projective, there exists a lift
\(
p:M\longrightarrow \bigoplus_{i \in P} M_{\langle i\rangle}
\)
such that
\[
\begin{tikzcd}
\displaystyle\bigoplus_{i\in P} M_{\langle i\rangle} \arrow[r,"q"] 
& M \arrow[r] 
& 0 \\
& M \arrow[u,"\mathrm{id}_M"{xshift=0.7ex}] 
      \arrow[ul,"p"'{yshift=-1.2ex}]
\end{tikzcd}
\]

 by Definition \ref{def}.
 We make the following two observations on an $R[U_0]$-module homomorphism from $M$ to $M_{\langle k\rangle}.$
\begin{remark}\label{re1}
If for any \(R[U_0]\)-module homomorphism,
\(
f:M\longrightarrow M_{\langle k\rangle}
\)
we have
\(
f(x)=t^{\,k-|y_\ell|}y_\ell \neq 0,\, |y_\ell|=k,
\)
then
\(
|x|=|f(x)|\ge k,
\)
since \(f\) is graded.
\end{remark}

\begin{remark}\label{re2}
If \(x\) is decomposable, say \(x\in D_k\), then any \(R[U_0]\)-module homomorphism
\(
f:M\longrightarrow M_{\langle k\rangle}
\)
satisfies \(f(x)=0\).
\end{remark}
\begin{proof}
Let \(
x=\sum_r t^{\,k-|d_r|} d_r,
\; |d_r|<k \text{ for all } r.
\)
Then \(f(d_r)=0\) for all \(r\), since \(f\) preserves degree and
\(
(M_{\langle k\rangle})_{|d_r|}=\{0\} \quad \text{for all } r.
\)
Hence \(f(x)=0\).
\end{proof}

Combining Remark \ref{re1} and \ref{re2}, we get a necessary and sufficient criterion for an element to be decomposable.

\begin{lemma}\label{lem}

An element \(x\in M\) is decomposable if and only if 
\(
p(x)=\ell_1+\ell_2+\cdots+\ell_r
\qquad\text{with } 0\neq \ell_j\in M_{\langle s_j\rangle},\ s_j<|x|\ \text{for all }j .
\)
\end{lemma}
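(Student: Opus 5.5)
Write $x\in M_k$ homogeneous of degree $k=|x|$, and decompose $p(x)=\sum_j \ell_j$ with $\ell_j\in M_{\langle s_j\rangle}$ nonzero. Since $p$ is graded, each $\ell_j$ has degree $k$; being a nonzero element of $M_{\langle s_j\rangle}$ of degree $k$, it has the form $t^{k-s_j}\sum_a c_a\widetilde m_a$ with $\widetilde m_a$ basis elements in degree $s_j$, which forces $s_j\le k$. So the only two cases are $s_j<k$ and $s_j=k$, and the lemma says that $x$ is decomposable exactly when no summand lies in $M_{\langle k\rangle}$.

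For the forward direction, assume $x\in D_k$. Compose $p$ with the projection $\pi_k:\bigoplus_i M_{\langle i\rangle}\to M_{\langle k\rangle}$, which is a graded $R[U_0]$-homomorphism $M\to M_{\langle k\rangle}$. By Remark~\ref{re2}, $\pi_k(p(x))=0$. Hence no $\ell_j$ has $s_j=k$, and by the previous paragraph all $s_j<k$.

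For the converse, suppose $p(x)=\sum_j\ell_j$ with every $s_j<k$. Apply $q$ and use $q\circ p=\mathrm{id}_M$ to get $x=\sum_j q(\ell_j)=\sum_j\phi(\ell_j)$. Each term can be written $\phi(\ell_j)=t^{k-s_j}\sum_a c_a m_a$ with $m_a\in M_{s_j}$. So $\phi(\ell_j)$ lies in $\mathrm{Im}(M_{s_j}\to M_k)$ with $s_j<k$, and summing gives $x\in D_k$.

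The main point to check is the degree bookkeeping: that a nonzero degree-$k$ element of $M_{\langle s\rangle}$ forces $s\le k$ (this is Remark~\ref{re1}), and that the coordinate projection $\pi_k$ is a graded homomorphism, so that Remark~\ref{re2} applies to $\pi_k\circ p$. I would state the lemma for homogeneous $x$ only, since "decomposable" is defined only for homogeneous elements. Otherwise the argument is direct.
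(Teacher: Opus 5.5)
Your proof is correct and follows the same route as the paper. For the converse, you use $q\circ p=\mathrm{id}_M$ to write $x$ as a sum of images from strictly lower degrees. For the forward direction, you apply Remarks~\ref{re1} and~\ref{re2} to the graded maps $\pi_i\circ p$, which forces $s_j\le k$ and rules out $s_j=k$. Your explicit degree bookkeeping, and your restriction to homogeneous $x$, simply spell out what the paper's terse proof leaves implicit.
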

\begin{proof}
Let
\[
p(x)=\sum_{i=1}^r \ell_i,
\qquad 
\ell_i=t^{\,k-s_i}z_i \ \text{with } |z_i|=s_i<k .
\]
Since \(x=q(p(x))\), where \(q\) is the sum map, it follows that
\(x\in D_k\).

Conversely, let \(x\in D_k\).
Then define
\(
f_i=\pi_i\circ p : M \longrightarrow M_{\langle i\rangle},\text{ for those }i \text{ with }\ell_i\neq 0.
\)
If \(f_i(x)\neq 0\), then by the above remarks we must have \(i<k\).

\end{proof}
\begin{definition}
    We say that a decomposable element $x\in M_k$ \emph{can be written in terms of indecomposable elements}, if
\(
x=\sum_{i=1}^m t^{\,k-\ell_i}x_i,
\)
where each $x_i$ is indecomposable element in $ M_{\ell_i}$ and $\ell_i<k$.

\end{definition}
We now obtain a necessary condition on the decomposable elements of a projective module.
\begin{lemma}\label{propp}
Any decomposable element can be written in terms of indecomposable elements.
\end{lemma}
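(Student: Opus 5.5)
We argue by well-founded induction on the degree, using the finiteness statement of Theorem \ref{14} together with the vanishing below $\mu$ given by Proposition \ref{8}. Let $x\in M_k$ be decomposable, so $x\in D_k$. By definition of $D_k$ we can write
\[
x=\sum_{a=1}^{h} t^{\,k-s_a}y_a,\qquad y_a\in M_{s_a},\ s_a<k .
\]
Each $y_a$ is either indecomposable, in which case we keep it, or decomposable, in which case we expand it again as an element of $D_{s_a}$. Substituting back and using $t^{k-s_a}t^{s_a-s'}=t^{k-s'}$ gives a new expression of $x$ as a sum of translates of elements of strictly smaller degree. The task is to show this rewriting terminates after finitely many steps, with only indecomposable terms left.

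To obtain termination, we organise the argument as a strong induction over the finite set of "relevant" degrees below $k$. By Theorem \ref{14}, only finitely many $r<k$ have $b_r\neq 0$. We claim the stronger statement that for every $r\le k$,
\[
M_r=\sum_{\ell\le r}\ \sum_{j=1}^{b_\ell} R[U_0]\,e_j^{\ell}\ \cap M_r .
\]
That is, $M_r$ is spanned by the translates $t^{r-\ell}e_j^\ell$ with $\ell\le r$. Granting the claim, a decomposable $x$ lies in $D_k$, and each $y_a\in M_{s_a}$ is a combination of $t^{s_a-\ell}e^\ell_j$ with $\ell\le s_a<k$. Hence $x$ is a combination of $t^{k-\ell}e_j^\ell$ with $\ell<k$, and each $e_j^\ell$ is indecomposable. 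Scalar coefficients can be absorbed: $c\,e_j^\ell$ with $c\ne0$ is still indecomposable, since $M_\ell/D_\ell$ is free by Proposition \ref{10}.

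To prove the claim at a degree $r$, we show $M_r=D_r+\sum_j Re^r_j$, which holds by the choice of the $e^r_j$ as lifts of a basis of $M_r/D_r$. It then suffices to prove $D_r$ is spanned by translates from lower degrees, which is the inductive hypothesis applied at each $s<r$. For the induction to be well founded, we must avoid infinite descending chains $r>s_1>s_2>\cdots$ of degrees carrying nonzero contributions. Here we use the lift $p$ and Lemma \ref{lem}. Writing $p(x)=\sum \ell_j$ with $\ell_j\in M_{\langle s_j\rangle}$, we have finitely many degrees $s_j<k$, and $x=q(p(x))$ expresses $x$ through elements of $M_{s_j}$. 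Iterating, every degree that ever appears is bounded below by the finitely many degrees in the supports of the basis elements $u_\eta$ of $F$ occurring in $x$. By Lemma \ref{4} and Proposition \ref{3}, any degree $s$ from which an element maps nontrivially into the representation must satisfy $s\ge$ the join of certain $|u_\eta|$ for $\eta\in I_x$. There are only finitely many such joins (the set $T$). Hence only finitely many degrees can ever occur, so the descent stops, and every branch ends at an indecomposable element or at $0$. Proposition \ref{8} rules out nonzero elements below the minimal degree.

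The main obstacle is this termination step. Without it, the naive recursive expansion could in principle descend forever, since $P$ need not be well-founded (for example $P=\mathbb{R}$). We would first try to control all degrees via the finite set $T$ of joins of degrees of the $u_\eta$ in $I_x$. Injectivity (Lemma \ref{7}) ensures that each lower-degree summand $y$ has $I_y\subseteq$ the finite set of relevant $u_\eta$ after translation. Then $\deg y\ge$ the join of the degrees of $I_y$, which lies in the finite set $T$. If cancellation between summands breaks this containment, we would instead use the splitting $p$ to choose the expansion canonically at each step, so that no cancellation occurs.
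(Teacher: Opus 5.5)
\textbf{There is a genuine gap: your termination step fails.}

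Your reduction of the lemma to the spanning claim is fine as far as it goes. But that claim is exactly the spanning half of Proposition \ref{15}, which the paper deduces \emph{from} this lemma. So everything hinges on termination, and it breaks in three places.

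First, a lower bound on degrees does not give finitely many degrees when $P$ is not well-founded. Take $P=\mathbb{R}$, $M=R[U_0]$ free on $u$ in degree $0$, and $x=t^{1}u\in D_1$. The expansion $x=t^{1/2}(t^{1/2}u)$, $t^{1/2}u=t^{1/2-1/3}(t^{1/3}u),\dots$ never stops, although every degree $1/n$ is $\ge 0=|u|$. So ``only finitely many degrees can ever occur'' is false. The scheme ``claim at all $s<r$ implies claim at $r$'' is also not a valid induction principle on such $P$.

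Second, the lower bound itself fails. Lemma \ref{7} controls only a single translate $t^{k-s}y$. With several summands, cancellation can bring in basis elements outside $I_x$. For example, take $M=F$ free on $u_n$ of degree $(n,-n)$ over $\mathbb{Z}^2$. Then $t^{(1,1)}u_0=t^{(1,0)}\bigl(t^{(0,1)}u_0+t^{(1,0)}u_{-1}\bigr)+t^{(2,0)}(-u_{-1})$ uses an element of degree $(-1,1)\not\ge(0,0)$.

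Third, Theorem \ref{14} does not rescue the argument. It only bounds the degrees with $b_r\neq 0$, while intermediate decomposable elements can live in degrees with $b_r=0$. Your fallback, choosing expansions via $p$ ``so that no cancellation occurs'', is asserted but not proved. Components of different $p(z_j)$ can cancel just as well.

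\textbf{The missing idea is that no iteration is needed at all.} Apply $p$ once.
\begin{itemize}
\item By Lemma \ref{lem}, $p(x)=\sum_j\ell_j$ with finitely many distinct degrees $s_j<k$ and $\ell_j=t^{k-s_j}\widetilde{z_j}$. Hence $x=\sum_j t^{k-s_j}z_j$ and $p(x)=\sum_j t^{k-s_j}p(z_j)$.
\item Let $s_i$ be maximal among the $s_j$. Then $z_i$ is indecomposable. The $M_{\langle s_i\rangle}$-component of $p$ kills $D_{s_i}$ (Remark \ref{re2}). For $j\neq i$, the degree-$s_j$ element $p(z_j)$ has no $M_{\langle s_i\rangle}$-component, since that would force $s_j>s_i$. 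So if $z_i\in D_{s_i}$, the component $\ell_i$ of $p(x)$ would be $0$, a contradiction.
\item Assign each non-maximal $s_j$ to one maximal $s_i>s_j$, and write $x=\sum_{i}t^{k-s_i}\bigl(z_i+\sum_{j\mapsto i}t^{s_i-s_j}z_j\bigr)$. Each bracket is an indecomposable element plus an element of $D_{s_i}$, hence indecomposable.
\end{itemize}
This is the paper's proof, and it uses neither Theorem \ref{14} nor any well-foundedness. Theorem \ref{14} enters only afterwards, in Proposition \ref{15}. There, this lemma guarantees that the recursion visits only the finitely many degrees with $b_r\neq 0$.
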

\begin{proof}
    Let $x\in D_k.$ Let $P:M\to \oplus_{i\in P}M_{<i>}$ be the map defined at the beginning of this section.
Let
\(
p(x)=0+\cdots+0+\ell_1+\ell_2+\cdots+\ell_r+0+\cdots
\in \bigoplus_{i\in P} M_{\langle i\rangle},
\)
where $0\neq \ell_j\in M_{\langle s_j\rangle}$ and
\(
\ell_j=t^{\,k-s_j}z_j \quad \text{with } |z_j|=s_j<k
\)
by Lemma~\ref{lem}.
Thus
\(
x=q(p(x))=\ell_1+\ell_2+\cdots+\ell_r .
\)

Let $s_i$ be a maximal element among $\{s_1,\dots,s_r\}$.
We claim that $z_i$ is indecomposable. Suppose, to the contrary, that $z_i$ is decomposable.
Then
\[
p(z_i)=\sum_\alpha h_\alpha, \tag{A}
\]
where $0\neq h_\alpha\in M_{\langle \bar{\alpha}\rangle}$ and
$\bar{\alpha}<s_i$ for all $\alpha$. Now
\(
x=\sum_j \ell_j=\sum_j t^{\,k-s_j}z_j,\text{ implies }
p(x)=\sum_j t^{\,k-s_j}p(z_j),
\)
since $p$ is an $R[U_0]$-module homomorphism. For $j\neq i$, write
\(
p(z_j)=\sum_\beta a_\beta,
\)
where $0\neq a_\beta\in M_{\langle \bar{\beta}\rangle}$ with
$\bar{\beta}\le s_j$.

\makebox[0pt][r]{(B)}
\bigg\{
\begin{minipage}{.8\textwidth}
    If for some $j\neq i$ the component of $M_{\langle s_i\rangle}$ in $p(z_j)$
was nonzero, then we would have $s_i\le s_j$, contradicting the maximality
of $s_i$.
Hence, in each $p(z_j)$ the component in $M_{\langle s_i\rangle}$ is zero.
\end{minipage}

Combining (A) and (B), we observe that the component of $M_{\langle s_i\rangle}$ in $p(x)$ is zero, which contradicts our initial assumption that $z_i$ is decomposable.

We now can write
\(
\ell_i+\sum_{s_j<s_i}\ell_j
=
t^{\,k-s_i}z_i+\sum_{s_j<s_i}t^{\,k-s_j}z_j
=t^{\,k-s_i}z_i+\sum_{s_j<s_i}t^{\,k-s_i}t^{s_i-s_j}z_j=
t^{\,k-s_i}\left(
z_i+\sum_{s_j<s_i}t^{\,s_i-s_j}z_j
\right).
\)
Here
\(
z_i+\sum_{s_j<s_i}t^{\,s_i-s_j}z_j
\)
is also indecomposable, as $z_i$ is indecomposable and $\sum_{s_j<s_i}t^{\,s_i-s_j}z_j$ is decomposable.

Repeating this argument for each maximal $s_i$, we conclude that
any $x\in D_k$ can be written in terms of indecomposable elements.

\end{proof}
\begin{remark}\label{remnew}
    Note that Remark \ref{re1}, \ref{re2}, Lemma \ref{lem}, \ref{propp} are valid when $P$ is a preordered set with compatible abelian group structure. \end{remark}
The next result provides a basis for the $R[U_0]$-module $M.$
\begin{proposition}\label{15}
   The union  \(
\mathcal{B}=\bigcup_{k \in P} \bigcup_{j=1}^{b_r} \{ e_j^r \}
\)
forms a basis of \( M. \)

\end{proposition}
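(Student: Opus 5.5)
To prove Proposition~\ref{15} we must show two things: $\mathcal{B}$ generates $M$ as an $R[U_0]$-module, and $\mathcal{B}$ is $R[U_0]$-linearly independent. Since $M$ is graded and the elements $e_j^r$ are homogeneous, it suffices to work one degree at a time. Fix $i\in P$. The degree-$i$ part of the submodule generated by $\mathcal{B}$ is the $R$-span of $\{e_j^i\}_{j}$ together with $\{t^{\,i-r}e_j^r : r<i\}$. Linear independence in degree $i$ follows from Proposition~\ref{13}, with the degree-$i$ elements $e_j^i$ adjoined. The argument there treats the finitely many degrees $r\le i$ by descending induction on a maximal degree $\sigma$. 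When $\sigma=i$, a relation $\sum_j\lambda_j e_j^i\in D_i$ forces $\lambda_j=0$, because the $\overline{e_j^i}$ form a basis of $M_i/D_i$. After that the argument reduces to the case $r<i$ already handled.

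\textbf{Spanning.} We prove this by induction on the degree, using well-foundedness below $i$. Let $x\in M_i$. Since $\{\overline{e_j^i}\}$ is a basis of $M_i/D_i$ (Proposition~\ref{10}), we can write $x=\sum_j c_j e_j^i + d$ with $c_j\in R$ and $d\in D_i$. By Lemma~\ref{propp}, $d=\sum_{\ell} t^{\,i-s_\ell}z_\ell$ with $s_\ell<i$ and each $z_\ell$ indecomposable in $M_{s_\ell}$. Each $z_\ell$ in turn equals an $R$-combination of the $e^{s_\ell}_j$ plus an element of $D_{s_\ell}$, and we repeat the decomposition. The key point is that the process terminates. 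All degrees that occur lie in the set of $r<i$ with $b_r\neq0$, and this set is finite by Theorem~\ref{14}.

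To make the termination argument rigorous, I would not iterate naively. Instead I would consider the $R$-submodule $V_i\subseteq M_i$ spanned by $\{t^{\,i-r}e_j^r: r\le i\}$ and show $V_i=M_i$ by induction. First handle $i=\mu$, or any degree where $D_i=0$: there $V_i=M_i$ directly. For general $i$, suppose $V_s=M_s$ for all $s<i$. Then $D_i=\sum_{s<i}t^{\,i-s}M_s=\sum_{s<i}t^{\,i-s}V_s\subseteq V_i$, and $M_i=\operatorname{span}\{e_j^i\}+D_i\subseteq V_i$.

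The main obstacle is justifying this induction, since $P$, e.g.\ $\mathbb{Z}^n$, is not well-founded below $i$. I would argue by contradiction. Suppose $V_i\neq M_i$. Then there is some $s_1<i$ with $V_{s_1}\neq M_{s_1}$, since otherwise $D_i\subseteq V_i$ and $V_i=M_i$. Iterating gives a strictly descending chain $i>s_1>s_2>\cdots$ with $M_{s_k}\neq0$. By Lemma~\ref{7} each $M_{s_k}$ injects into $M_i$. Using Lemma~\ref{4}, each $s_k$ lies above the minimal birth degree of some nonzero element of $M_i$, namely a join of degrees of basis elements $u_\eta$ occurring in representations of elements of $M_i$. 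Every such element is an $R$-combination of the finitely many $x^i_a$, so these degrees range over the finite set $T$ of joins of subsets of $\mathcal{U}$. Choosing $y_k\in M_{s_k}\setminus V_{s_k}$ and pushing it to $M_i$, its birth degree lies in $T$ and is $\le s_k$. This gives an infinite descending chain whose members are each bounded below by elements of the finite set $T$. I expect this to force stabilization, because the chain can be taken to pass through birth degrees themselves. Indeed, if $y\in M_s$ has birth degree $m<s$, then $y=t^{\,s-m}y'$ with $y'\in M_m$, and one replaces $s$ by $m$. Making this reduction precise, so that the chain lives in the finite set $T$ and therefore cannot be infinite, is where I expect the real work to lie.
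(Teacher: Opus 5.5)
Your independence argument (kill the degree-$i$ coefficients modulo $D_i$, then apply Proposition~\ref{13} to the lower-degree terms) is exactly the paper's proof. So is your first spanning argument: peel off $\sum_j c_j e^i_j$, expand the remainder by Lemma~\ref{propp}, recurse, and terminate by Theorem~\ref{14}. That first termination argument is already complete, so the obstacle you go on to raise does not apply to it. Every degree the recursion visits carries an indecomposable element. Hence it has $b_r\ge 1$ (Proposition~\ref{10}) and lies in the set $\{r<i : b_r\neq 0\}$, which is finite by Theorem~\ref{14}. Each branch of the recursion is a strictly decreasing chain in this finite set, so its length is bounded by the set's cardinality, and each step branches only finitely often. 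No well-foundedness of $P$ below $i$ is needed.

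The difficulty appears only after you switch to the $V_i$ formulation. There the degrees $s_k$ with $V_{s_k}\neq M_{s_k}$ need not have $b_{s_k}\neq 0$, so Theorem~\ref{14} no longer controls them. That route still closes if you perform your birth-degree replacement at every step rather than once:
\begin{enumerate}
\item Given $m_k$ with $V_{m_k}\neq M_{m_k}$, the decomposition $M_{m_k}=\operatorname{span}\{e^{m_k}_j\}+D_{m_k}$ gives $q<m_k$ and $w\in M_q$ with $t^{m_k-q}w\notin V_{m_k}$. Hence $w\notin V_q$, because $t^{m_k-q}V_q\subseteq V_{m_k}$.
\item Let $m_{k+1}$ be the minimum of $S^M_{t^{i-q}w}$; here $t^{i-q}w\neq 0$ by Lemma~\ref{7}. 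This minimum lies in $T$ by Corollary~\ref{5}, since $t^{i-q}w$ is an $R$-combination of the $x^i_a$. It satisfies $m_{k+1}\le q<m_k$.
\item By Lemma~\ref{7}, $w=t^{\,q-m_{k+1}}w'$ for some $w'\in M_{m_{k+1}}$. Then $w'\notin V_{m_{k+1}}$, since $t^{\,q-m_{k+1}}V_{m_{k+1}}\subseteq V_q$.
\end{enumerate}
Starting from $m_0=i$, this produces an infinite strictly decreasing sequence $m_1>m_2>\cdots$ in the finite set $T$, a contradiction. This variant avoids Lemma~\ref{propp} and Theorem~\ref{14} for spanning, at the price of using joins through Corollary~\ref{5}. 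The paper's spanning step needs only Lemma~\ref{propp} (valid over preorders, Remark~\ref{remnew}) and the rank bound of Theorem~\ref{14}.
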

\begin{proof}
\textbf{Any element $x$ can be written as an $R[U_0]$-combination of elements of $\mathcal{B}$:}\\
If $x \in M_k$ but $x \notin D_k$, then \( \overline{0} \neq \overline{x}  = \sum_j c_j \, \overline{e_j^k} . \)
This implies 

\(
x - \sum_j c_j e_j^k \in D_k,
\)
say \( y = x - \sum_j c_j e_j^k \in D_k. \)
then
\(
x = y + \sum_j c_je_j^k,
\)
where $c_j \in R$.  
By the Lemma \ref{propp}, the element \(y\) can be written in terms of indecomposable elements, that is \(
y=\sum_i t^{\,k-|y_i|}\,y_i,\quad|y_i|<k
\) where each $y_i\in M_{|y_i|}$ is indecomposable.

Each \(y_i\) is in the same situation as \(x\) but with strictly smaller degree.
Repeating this procedure, the process terminates after finitely many steps
by Theorem~\ref{14}.
In the obvious case when \(x\in D_k\), we may view \(x\) itself as \(y\), and the
argument is the same.

Therefore, in both the cases, $x$ can be represented as a linear combination of the basis elements $e_j^r$ with coefficients from $R[U_0]$.

\textbf{Uniqueness of the expression:}
Let $x \in M_k $.  Let \(x=y+z=y'+z'\), where \(y,y'\in D_k\) and \(z,z'\in M_k\setminus D_k\).
Write
\(
z=\sum_{\ell} d_\ell\, e_\ell^{\,k}
\;\text{and}\;
z'=\sum_{j} c_j\, e_j^{\,k}.
\)

Then $y - y' = z' - z$, Since \(y-y'\in D_k\), we have
\[
z' - z = \sum_j c_j e_j^k - \sum_\ell d_\ell e_\ell^k
       = \sum_i a_i e_i^k \text{ [say] } \in D_k. 
\]
Hence
\(
\overline{\sum_i a_i e_i^k} \;=\; \sum_i a_i \overline{e_i^k} \;=\; \overline{0}.
\)
implying all $a_i = 0$ since $\{\overline{e_i^k}\}$ is linearly independent in $M_k / D_k$. Hence \(z-z'=0.\)
Thus, $z = z'$ and $y = y'$.

Suppose \(y\) admits two representations
\[
\sum_{\alpha} c_\alpha t^{k - |e_\alpha|} e_\alpha
=
\sum_{\beta} d_\beta t^{k - |e_\beta|} e_\beta.
\]
So
\(
\sum_{\delta\in \alpha\cup\beta} (c_\delta-d_\delta)\,
t^{\,k-|e_\delta|} e_\delta = 0 ,\;\text{where } |e_\delta|<k.
\)
By Proposition~\ref{13}, we obtain \(c_\delta=d_\delta\) for all \(\delta\),
and hence the representation is unique.

\end{proof}
\subsection{Proof of the Theorem \ref{0}:}\label{sec7}
If \( M \) is graded free, then it is a direct summand of a graded free module (namely \( M \oplus 0 \)), hence graded projective. 

Conversely, we obtain a homogeneous basis for \(M\) by Proposition~\ref{15}, it follows that \(M\) is a graded free \(R[U_0]\)-module.

\section{A Family of flat persistence module, which are not projective}\label{sec5}

In this section,  we provide a family of flat modules. We obtain a criterion (Lemma-\ref{lemfalt}) for a persistence module to be not projective. The section ends with an example (\ref{exx}) of persistence module that are flat but not projective. These results are generalizations of Proposition 6.1 and Corollary 6.4 in \cite{bubenik2021homological}. For this section we assume  the indexing set $(P,\le)$ is a preordered set with compatible abelian group structure (due to Remark \ref{remnew}) unless mentioned otherwise.

\begin{lemma}\label{lemfalt}
Let $P$ be a lattice and $O\subset P$.
Assume there exists $a\in O$ such that no element of $V_a\cap O$ is a minimal
element of $O$, where $V_a=\{x \in P :x\le a\}$.

Let $R$ be a PID and $M$ be an $R[U_0]$-module, such that
\[
\bigg\{
\begin{aligned}
M_i &\neq 0 \text{, is a finite dimensional free } R \text{ module} , && i \in O, \\
M_i &= 0, && i \notin O .
\end{aligned}
\]

Then $M$ is not projective.
\end{lemma}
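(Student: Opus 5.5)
We argue by contradiction: suppose $M$ is graded projective. The plan is to show that every nonzero element of $M_a$ must come from an indecomposable element at some degree in $V_a\cap O$. Such a degree would have to be minimal in $O$, which the hypothesis rules out.

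First we note that indecomposable elements can only live at degrees that are minimal in $O$. Let $s\in O$ and suppose $s$ is not a minimal element of $O$, so there is $q\in O$ with $q<s$. We would like to conclude $D_s=M_s$. This does \emph{not} follow from the bare hypotheses, since the structure maps $M_q\to M_s$ could be zero. So we use projectivity directly through Lemma \ref{7}, whose proof only needs $M$ to be a graded direct summand of a free module and hence is available here. By that lemma, every structure map $M_q\to M_s$ is injective. This alone still does not give $D_s=M_s$.

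So we instead use a chain argument. By Lemma \ref{propp} and Lemma \ref{lem} (valid here by Remark \ref{remnew}), the following holds for every nonzero $x\in M_s$. Either $x$ is indecomposable, or $x=\sum t^{s-\ell_i}x_i$ with each $x_i$ indecomposable of degree $\ell_i<s$, and each $\ell_i\in O$ because $M_{\ell_i}\neq 0$. Iterating the decomposition inside $M_a$, we take a nonzero $x\in M_a$ (possible since $a\in O$). We obtain an indecomposable $y\neq 0$ in some $M_\ell$ with $\ell\le a$ and $\ell\in O$. Thus $\ell\in V_a\cap O$, and hence $\ell$ is not minimal in $O$.

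It remains to show that an indecomposable element at a non-minimal degree of $O$ is impossible. Here we adapt Section \ref{sec3}. By Corollary \ref{5}, the birth set of $y$ has a minimum $m=\bigvee\{|u_\eta|:\eta\in I_y\}$. Since $y$ is indecomposable, $m=\ell$. The key point is to show that some $u_\eta$ with $|u_\eta|=\ell$ has a nonzero $M$-component of degree $\ell$ which is itself not hit from below. We would then try to produce, using a $q\in O$ with $q<\ell$ and injectivity of $M_q\to M_\ell$, a contradiction with the lifting $p\colon M\to\bigoplus M_{\langle i\rangle}$.

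Honestly, this step is the main obstacle. The hypothesis as stated seems too weak, since a free module $R[U_0]$ shifted to have support $U_b$ has $O=U_b$ and every $V_a\cap O$ contains the minimal element $b$. The intended argument is therefore probably the descending one: any $a$-chain of decompositions must terminate at a minimal element of $O$ below $a$. Termination follows from finiteness in Theorem \ref{14}, applied via the projective hypothesis. This yields a minimal element of $O$ in $V_a\cap O$, contradicting the assumption. I would write the proof along these lines: repeatedly apply Lemma \ref{propp} to descend, use Theorem \ref{14} to stop, and show that the terminal degree is minimal in $O$. For the last point, observe that if some $q\in O$ lies below the terminal degree $\ell$, then Proposition \ref{6} and Proposition \ref{8}, applied with $i=\ell$, force $M_q=0$, contradicting $q\in O$.
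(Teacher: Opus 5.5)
\paragraph{The last step fails.} Proposition~\ref{8}, applied with $i=\ell$, gives $M_s=0$ only for $s<\mu$, where $\mu$ is a minimal element of $S=\bigcup_{0\neq x\in M_\ell}S^M_x$. It says nothing about $M_q$ when $q<\ell$ but $q\not<\mu$. In fact, if $q\in O$ and $q<\ell$, Lemma~\ref{7} puts $q$ into $S$. So $\mu\neq\ell$, and nothing forces $M_q=0$.

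A concrete counterexample is the projective module $R[U_0]x_b\oplus R[U_0]x_c$ with $|x_b|=b<c=|x_c|$. The generator $x_c$ is indecomposable, and your descent started at $x_c$ stops at $\ell=c$. Yet $b<c$ lies in $O=U_b$ and $M_b\neq 0$. So ``the terminal degree is minimal in $O$'' (your earlier claim that indecomposables cannot sit at non-minimal degrees of $O$) cannot be derived from projectivity alone.

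Your descent halts at the first indecomposable it reaches. Hence Theorem~\ref{14} plays no real role, and the hypothesis on $a$ is used at only one degree. Your shifted-free example also does not show the hypothesis is too weak: it simply fails the hypothesis.

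\paragraph{The missing idea.} The paper restarts the descent at each stage from a \emph{fresh decomposable} element, instead of stopping at the indecomposable one. You set Lemma~\ref{7} aside because it does not give $D_s=M_s$, but only $D_s\neq 0$ is needed. Suppose $d_k\in V_a\cap O$ has been reached. It is not minimal in $O$, so there is $q<d_k$ with $q\in O$. By Lemma~\ref{7}, the image of $M_q\neq 0$ is a nonzero decomposable subspace of $M_{d_k}$. Lemma~\ref{propp} writes a nonzero element of it in terms of indecomposables of degrees $<d_k$. This gives $d_{k+1}<d_k$ in $V_a\cap O$ with $b_{d_{k+1}}\geq 1$. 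Repeating yields infinitely many distinct $r<a$ with $b_r\geq 1$, contradicting Theorem~\ref{14}.

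\paragraph{A shorter repair with your own tools.} Conclude with $\mu$ rather than $\ell$. Propositions~\ref{6} and~\ref{8} with $i=a$ produce $\mu\leq a$ with $M_\mu\neq 0$, so $\mu\in O$, and $M_s=0$ for all $s<\mu$. Thus $\mu$ is a minimal element of $O$ lying in $V_a\cap O$. This contradicts the hypothesis directly, with no descent and no appeal to Theorem~\ref{14}.
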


\begin{proof}
    Suppose, for contradiction, that $M$ is projective.
Since $a$ is not a minimal element of $O$, there exists $b<a$ with $b\in O$.
By Lemma~\ref{7}, the structure map $M_b\to M_a$ is injective.
Hence, there exists a decomposable element $z\in M_a$.

By Lemma \ref{propp}, we can write
\(
z=\sum_i t^{n_i} z_i,
\)
where $\deg(z_i)<\deg(z)=a$ and each $z_i$ is indecomposable.

Now $\deg(z_i)\in V_a\cap O$, and by assumption every element of $V_a\cap O$
is not a minimal element of $O$.
Therefore, for each $z_i$ there exists a decomposable element
$y\in M_{\deg(z_i)}$.
Thus we can write
\(
y=\sum_j t^{m_j} y_j,
\)
where
\(
\deg(y_j)<\deg(z_i)<a,
\)
and each $y_j$ is indecomposable.

Continuing this process, we obtain indecomposable elements in
infinitely many mutually distinct degrees.
Hence we obtain infinitely many linearly independent indecomposable elements.

This contradicts Theorem \ref{14}, that is
\(
\sum_{r<i} b_r < \infty.
\)
Therefore, $M$ cannot be projective.

\end{proof}

\begin{definition}
    Let $R$ be a PID and $A \subset P$ be a convex subset. The indicator persistence module on $A$ is the persistence module $R_A : P \to \mathrm{Mod}_R$ given by ${(R_A)}_a = R$ if $a \in A$ and $0$ otherwise, and all the maps ${(R_A)}_{a \le b}$, where $a,b \in A$, are identity maps.

\end{definition}
\begin{definition}
    Let $U_i$ be a principal upset and $R_{U_i}$ be a free $R[U_0]$-module with generator $x_i$, where $\deg(x_i)=i$.
Consider the family $\{R_{U_i}\}_{i\in U}$ with $R[U_0]$-module homomorphisms
\(
f_{ji} : R_{U_j} \to R_{U_i}
\)
induced by the inclusion $U_j \subseteq U_i$ for $i \le j$, defined by
\[
f_{ji}(x_j)=t^{\,j-i}x_i .
\]
Define
\[
\operatorname{colim}_{i\in U} R_{U_i}
:=
\left(\bigoplus_{i\in U} R_{U_i}\right)\big/ N,
\]
where $N$ is the submodule generated by
\(
\{x - f_{ji}(x)
: x\in R_{U_j},\; i\le j\}
\) (see Lemma 10.8.2, \cite{stacks-project}).
\end{definition}
The following result expresses the indicator module $R_U$ on an upset $U$ as a colimit of some indicator modules.
\begin{proposition}\label{propcolim}
Let $U\subset P$ be an upset, then
\(
\operatorname{colim}_{i \in U} R_{U_i} = R_U
\)
\end{proposition}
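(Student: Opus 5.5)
I will prove the statement by constructing an explicit graded $R[U_0]$-homomorphism
\[
\Phi:\bigoplus_{i\in U} R_{U_i}\longrightarrow R_U,
\]
showing that it is surjective, and showing that its kernel is exactly $N$. The colimit is then $R_U$ by the first isomorphism theorem.

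\textbf{Defining $\Phi$.} Let $\mathbf{1}_i$ denote the element $1\in (R_U)_i = R$ for $i\in U$. Since $R_{U_i}$ is free on $x_i$ of degree $i$, I define $\Phi$ on each summand by $x_i\mapsto \mathbf{1}_i$. This is a well-defined graded homomorphism. Indeed, $t^s x_i$ has degree $i+s\ge i$, so $i+s\in U$ because $U$ is an up-set. Moreover, $t^s\mathbf{1}_i=\mathbf{1}_{i+s}$, because all structure maps of $R_U$ are identities. Hence $\Phi(c\,t^s x_i)=c\,\mathbf{1}_{i+s}$.

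\textbf{$N\subseteq\ker\Phi$ and surjectivity.} For $i\le j$ we have $\Phi(x_j-f_{ji}(x_j))=\mathbf{1}_j-t^{j-i}\mathbf{1}_i=0$. Since $N$ is generated by these elements together with their $R[U_0]$-multiples, $N\subseteq\ker\Phi$, and $\Phi$ descends to a map $\overline{\Phi}$ on the colimit. Surjectivity is immediate: $(R_U)_a$ is zero for $a\notin U$, and for $a\in U$ it is generated by $\mathbf{1}_a=\Phi(x_a)$.

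\textbf{$\ker\Phi\subseteq N$ (the main step).} I work degree by degree. Fix $a\in P$. The degree-$a$ part of $\bigoplus_{i\in U}R_{U_i}$ is the direct sum over $i\in U$ with $i\le a$ of the free rank-one $R$-modules $R\,t^{a-i}x_i$. It is nonzero only if $a\in U$. A homogeneous element of degree $a$ has the form
\[
y=\sum_{i} c_i\,t^{a-i}x_i ,
\]
a finite sum over $i\in U$ with $i\le a$ and $c_i\in R$. Then $\Phi(y)=\big(\sum_i c_i\big)\mathbf{1}_a$, so $y\in\ker\Phi$ exactly when $\sum_i c_i=0$.

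Now observe that $a\in U$ and $x_a$ itself occurs in degree $a$. Each generator relation gives $x_a-t^{a-i}x_i = x_a-f_{ai}(x_a)\in N$ for $i\le a$. Therefore
\[
y=\sum_i c_i\,t^{a-i}x_i=\Big(\sum_i c_i\Big)x_a-\sum_i c_i\big(x_a-t^{a-i}x_i\big)= -\sum_i c_i\big(x_a-f_{ai}(x_a)\big)\in N .
\]
Since $\ker\Phi$ is a graded submodule, it is spanned by its homogeneous elements, and so $\ker\Phi\subseteq N$.

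\textbf{Expected obstacle and conclusion.} The one point to check carefully is that the kernel and $N$ are graded, and that the relation $x_a-f_{ai}(x_a)$ is available for every $i\le a$ in $U$. This holds because $a$ lies in the indexing set of the colimit: $U$ is an up-set and $a$ is above some $i\in U$. If $P$ is only a preorder, I also need $i\le a$, $a\le i$ to cause no trouble; the same formula still works there. With both inclusions established, $\overline{\Phi}$ is an isomorphism, and $\operatorname{colim}_{i\in U}R_{U_i}\cong R_U$.
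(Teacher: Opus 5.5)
Your proof is correct and takes essentially the same route as the paper. Both compute the colimit degreewise (zero off $U$, a single copy of $R$ in each degree $a\in U$) and identify it with $R_U$ via $x_i\mapsto 1\in(R_U)_i$. Your explicit identity $y=-\sum_i c_i\,(x_a-f_{ai}(x_a))$, which uses that $x_a$ itself lies in degree $a$ because $a\in U$, is exactly the justification the paper compresses into the phrase ``the relations defining $N_s$ identify all summands.''
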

\begin{proof}
    Denote
\(
C=\operatorname{colim}_{i\in U} R_{U_i}.
\)
We write
\[
C=\bigoplus_{s\in P} C_s,
\text{ so }
C_s=\left(\bigoplus_{i\in U} (R_{U_i})_s\right)\big/ N_s,
\]
where
\(
N_s = N \cap \bigoplus_{i\in U} (R_{U_i})_s .
\)
\textbf{Case 1.} If $s\notin U$, then $(R_{U_i})_s=0$ for all $i\in U$, hence $C_s=0$.

\textbf{Case 2.} If $s\in U$, then
\[
(R_{U_i})_s =
\begin{cases}
R, & i\le s,\\
0, & \text{otherwise}.
\end{cases}
\]
Thus
\(
\bigoplus_{i\in U} (R_{U_i})_s
=
\bigoplus_{\substack{i\in U\\ i\le s}} R,
\)
and the relations defining $N_s$ identify all summands, so $C_s \cong R$.

We now denote the identity $1\in (R_U)_s=R$ by $y_s$.
We define an $R[U_0]$-module homomorphism from $R_{U_i}$ to $R_U$ by sending $x_i$ to $y_i$.
This induces an $R[U_0]$-module isomorphism from $C$ to $R_U$.

\end{proof}
We recall a sufficient condition for obtaining a flat module (see Lemma 6.3, \cite{bubenik2021homological}).
\begin{lemma}\label{lembub}
    Colimits of graded projective modules are $\otimes_{\mathrm{gr}}$-flat.

\end{lemma}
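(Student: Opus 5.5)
We prove Lemma \ref{lembub} by writing a colimit of graded projectives as a filtered-type colimit and then using that tensor products commute with colimits, together with the flatness of graded free modules.

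\textbf{Step 1 (graded projective implies flat).} A shifted free module $R[U_0](-a)$ satisfies $N\otimes_{\mathrm{gr}} R[U_0](-a)\cong N(-a)$. This follows from the colimit description $(N\otimes_{\mathrm{gr}}R_{U_a})_r=\operatorname{colim}_{s+t\le r}N_s\otimes_R (R_{U_a})_t$: the indexing category has a terminal object $(r-a,a)$, so the colimit equals $N_{r-a}$. Hence $-\otimes_{\mathrm{gr}}R[U_0](-a)$ is a shift functor, which is exact. Tensor products commute with direct sums, and direct sums are exact in graded modules. So graded free modules are flat. A graded projective $Q$ is a graded direct summand of a free $F$, so $-\otimes_{\mathrm{gr}}Q$ is a retract of an exact functor and is therefore exact.

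\textbf{Step 2 (colimits).} Let $C=\operatorname{colim}_{j\in J} Q_j$ with each $Q_j$ graded projective. For fixed $M$, the functor $M\otimes_{\mathrm{gr}}-$ is a left adjoint, with right adjoint the internal $\underline{\mathrm{Hom}}$ of \cite{bubenik2021homological}. It therefore preserves colimits, so
\[
N\otimes_{\mathrm{gr}} C\cong \operatorname{colim}_j (N\otimes_{\mathrm{gr}} Q_j).
\]
Take an injection $N'\hookrightarrow N$. Each map $N'\otimes Q_j\to N\otimes Q_j$ is injective by Step 1. We need the induced map on colimits to remain injective. Right exactness of tensor is automatic, so only this injectivity needs checking.

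\textbf{Main obstacle.} Colimits are exact only when they are filtered (directed). In general they are not even left exact, and Lemma \ref{lembub} as stated for arbitrary colimits is only valid under a directedness assumption. I would therefore read the statement, as Bubenik--Milićević do, with ``colimit'' meaning a directed colimit. This covers the intended application: in Proposition \ref{propcolim} the indexing set $U$, ordered by reverse inclusion of $U_i$, is directed whenever any two elements of $U$ have a common lower bound in $U$. For lattices this holds via meets, provided $U$ is closed under them; otherwise the indexing must be arranged to be directed. Granting directedness, filtered colimits of $R$-modules are exact degreewise. Since everything is computed degree by degree, injectivity passes to the colimit, and $C$ is $\otimes_{\mathrm{gr}}$-flat.
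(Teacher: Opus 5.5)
Your argument is correct for directed (filtered) colimits, which is the reading under which the statement is true. It is the standard proof: graded projectives are $\otimes_{\mathrm{gr}}$-flat (via shifts, direct sums and retracts), $N\otimes_{\mathrm{gr}}-$ preserves colimits as a left adjoint, and filtered colimits of $R$-modules are exact degreewise. The paper gives no argument of its own here, only the citation to Lemma~6.3 of Bubenik--Mili\'cevi\'c, so your proof fills that in rather than competing with a different route.

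You are also right that arbitrary colimits fail. Take $P=\mathbb{Z}$ and the injection $R_{U_1}\to R_{U_0}$, $x_1\mapsto t\,x_0$. Its cokernel is the one-point module $R_{\{0\}}$, which is a colimit of free modules. Tensoring that same injection with $R_{\{0\}}$ gives the zero map $R_{\{1\}}\to R_{\{0\}}$, so $R_{\{0\}}$ is not flat.

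Two corrections. The first is in Step~1. The pair $(r-a,a)$ is not terminal in $\{(s,t):s+t\le r\}$: for $b>0$, the pair $(r-a-b,a+b)$ is not below it. The isomorphism $(N\otimes_{\mathrm{gr}}R_{U_a})_r\cong N_{r-a}$ is still true. Only pairs with $t\ge a$ contribute, and each such $(s,t)$ is reached isomorphically from $(s,a)\le(r-a,a)$. Alternatively, use that $\otimes_{\mathrm{gr}}$ is the graded tensor product over $R[U_0]$.

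The second, more important, concerns your closing remark on the application, which is too optimistic. The colimit in Proposition~\ref{propcolim} is directed exactly when any two elements of $U$ have a common lower bound in $U$; for a lattice, this means $U$ is closed under $\wedge$. When this fails, no re-indexing can help, because $R_U$ need not be flat at all. For example, take $U=U_{(1,0)}\cup U_{(0,1)}\subset\mathbb{Z}^2$ and $R=k$ a field. Then $R_U$ is the ideal $(x,y)$ of $k[x,y]$, and $\operatorname{Tor}_1(k,(x,y))\cong\operatorname{Tor}_2(k,k)\neq0$.

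So Lemma~\ref{lembub} delivers Corollary~\ref{cor} only for such directed up-sets, for instance every up-set when $P$ is totally ordered, as in the one-parameter setting. The up-sets $\overline{U}_\ell$ and $U_\ell$ of Example~\ref{exx} are not of this kind. Write $\overline{U}_\ell=A\cup B$ with $A=\{y\ge\ell\}$ and $B=\{x\ge\ell\}$; here $R_A$ and $R_B$ are flat as directed colimits. The exact sequence $0\to R_{A\cap B}\to R_A\oplus R_B\to R_{\overline{U}_\ell}\to 0$ then shows $\operatorname{Tor}_1(R_{\{(0,0)\}},R_{\overline{U}_\ell})\neq 0$ in degree $(\ell,\ell)$.
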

Combining Proposition \ref{propcolim} and Lemma \ref{lembub}, we establish the flatness of indicator modules $R_U$ for upsets $U$.
\newpage
\begin{Corollary}\label{cor}
For any up-set $U$ of $P$, $R_U$ is a graded flat persistence module over $R[U_0]$.

\end{Corollary}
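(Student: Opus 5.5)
The plan is to combine Proposition \ref{propcolim} with Lemma \ref{lembub}, as the sentence before the statement suggests. First we check that each $R_{U_i}$, for $i\in U$, is graded projective. By construction $R_{U_i}$ is the graded free $R[U_0]$-module on one generator $x_i$ of degree $i$. This is the shifted module $R[U_0](-i)$: its component in degree $s$ is $R$ exactly when $s-i\in U_0$, i.e. when $s\ge i$, and all structure maps are identities. A graded free module is graded projective by condition (3) of Definition \ref{def}, taking $Q=0$. So $\{R_{U_i}\}_{i\in U}$ is a family of graded projective modules.

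Next we verify that the indexing data form a diagram, so that Lemma \ref{lembub} applies. The index category has the elements of $U$ as objects and an arrow $j\to i$ whenever $i\le j$. Each such arrow is sent to $f_{ji}(x_j)=t^{\,j-i}x_i$. Since $t^{\,k-j}t^{\,j-i}=t^{\,k-i}$, these maps compose correctly, $f_{ji}\circ f_{kj}=f_{ki}$. Also $f_{ii}$ is the identity, because $t^0=1$. Hence $\operatorname{colim}_{i\in U}R_{U_i}$ is an honest colimit in the category of graded $R[U_0]$-modules. The quotient description in the definition, $\left(\bigoplus R_{U_i}\right)/N$, is the standard construction of this colimit (Lemma 10.8.2 of \cite{stacks-project}). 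It is taken degreewise and is compatible with the grading, since $N$ is generated by homogeneous relations.

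Lemma \ref{lembub} now gives that $\operatorname{colim}_{i\in U}R_{U_i}$ is $\otimes_{\mathrm{gr}}$-flat. By Proposition \ref{propcolim} this colimit is isomorphic to $R_U$ as a graded $R[U_0]$-module. Flatness is preserved under isomorphism, because $-\otimes_{\mathrm{gr}}R_U$ is naturally isomorphic to $-\otimes_{\mathrm{gr}}C$. So $R_U$ is graded flat. The same argument covers the degenerate case $U=\emptyset$: there $R_U=0$, which is trivially flat and is the empty colimit.

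The main obstacle is the first step: Lemma \ref{lembub} is stated in \cite{bubenik2021homological} over a field or ring of coefficients, and we need it for graded modules over $R[U_0]$ with $R$ a PID. I would first check that its proof uses only two facts. The first is that $-\otimes_{\mathrm{gr}}F$ is exact for a graded free module $F$, since it is a direct sum of shifts. The second is that $\otimes_{\mathrm{gr}}$ commutes with colimits and that filtered colimits are exact. The index poset $U$ with the reversed order need not be directed for a general preordered $P$. If that is an issue, I would argue directly in each degree. The colimit in degree $s$ is taken over the set $\{i\in U: i\le s\}$, which has $s$ as a terminal object. So the colimit is effectively filtered degreewise, and exactness of $-\otimes_{\mathrm{gr}}R_U$ reduces to exactness at each degree.
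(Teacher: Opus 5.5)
Your plan follows the paper's own route: Proposition \ref{propcolim} plus Lemma \ref{lembub}, using that each $R_{U_i}$ is graded free. It breaks at exactly the step you flagged, and the break cannot be repaired. Lemma \ref{lembub} is only true for \emph{filtered} (directed) colimits. An arbitrary colimit of free modules can be any cokernel, e.g.\ $R[x,y]/(x,y)$.

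The diagram $i\mapsto R_{U_i}$, with arrows $R_{U_j}\to R_{U_i}$ for $i\le j$, is filtered precisely when $U$ is downward directed, i.e.\ any two elements of $U$ have a common lower bound in $U$. Your degreewise patch does not fix this, for three reasons:
\begin{itemize}
\item Exactness of $-\otimes_{\mathrm{gr}}R_U$ is not a degreewise property. The component $(N\otimes_{\mathrm{gr}}R_U)_r=\operatorname{colim}_{s+t\le r}N_s\otimes_R(R_U)_t$ involves $R_U$ in many degrees at once together with its structure maps.
\item Componentwise freeness says nothing about flatness. Over $P=\mathbb{Z}^2$, the module $R_{\{0\}}$ has free components but is not flat: multiplication by $x$ becomes zero on it.
\item With the arrows $f_{ji}$, the index $s$ is \emph{initial}, not terminal, in $\{i\in U: i\le s\}$. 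That poset need not be downward directed either.
\end{itemize}

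In fact the corollary fails as stated. Take $P=\mathbb{Z}^2$, so $R[U_0]=R[x,y]$, and $U=U_{(1,0)}\cup U_{(0,1)}$. Then $R_U\cong\mathfrak m=(x,y)$, and $\operatorname{colim}_{i\in U}R_{U_i}$ is just the pushout of $R_{U_{(1,0)}}\leftarrow R_{U_{(1,1)}}\rightarrow R_{U_{(0,1)}}$.
\begin{itemize}
\item In the colimit formula for $(\mathfrak m\otimes_{\mathrm{gr}}\mathfrak m)_{(1,1)}$, the only nonzero terms come from $(s,t)=((1,0),(0,1))$ and $((0,1),(1,0))$.
\item No relation links these two terms, so this component is $R^2$ with basis $x\otimes y$, $y\otimes x$.
\item Applying $-\otimes_{\mathrm{gr}}R_U$ to the monomorphism $\mathfrak m\hookrightarrow R[x,y]$ gives the multiplication map $\mathfrak m\otimes_{\mathrm{gr}}\mathfrak m\to\mathfrak m$.
\item This map kills $x\otimes y-y\otimes x\neq0$, so $R_U$ is not flat.
\end{itemize}
The same failure affects $\overline U_0=\{x\ge0\}\cup\{y\ge0\}\subset\mathbb{R}^2$ in Example \ref{exx}.

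What your argument does prove is the corollary for downward directed up-sets $U$. In that case $R_U$ is a directed colimit of the free modules $R_{U_i}$, $\otimes_{\mathrm{gr}}$ commutes with colimits, and directed colimits are exact degreewise. This works over any commutative $R$, so your PID worry is not the real issue. It covers every up-set when $P$ is totally ordered, e.g.\ $P=\mathbb{R}$ as in \cite{bubenik2021homological}.
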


\begin{proof}
Since every indicator module for any principal upset is graded free, the proof is obvious.

\end{proof}
Since a direct sum of flat modules is flat, we generalize Corollary \ref{cor} to attain a more general result.
\begin{Corollary}\label{5.7}
    If $\{U_l\}_{l\in L}$'s are upsets in $P$, then $\oplus_{l \in L} R_{U_l}$ is a flat module.
\end{Corollary}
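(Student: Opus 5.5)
The plan is to reduce Corollary~\ref{5.7} to Corollary~\ref{cor} by showing that a direct sum of $\otimes_{\mathrm{gr}}$-flat graded $R[U_0]$-modules is again $\otimes_{\mathrm{gr}}$-flat. By Corollary~\ref{cor}, each $R_{U_l}$ is graded flat, so only this closure property needs proof.

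Fix a short exact sequence $0\to A\to B\to C\to 0$ of graded $R[U_0]$-modules. First I will show that $-\otimes_{\mathrm{gr}}-$ commutes with arbitrary direct sums in the second variable, naturally in the first:
\[
A\otimes_{\mathrm{gr}}\Big(\bigoplus_{l\in L} R_{U_l}\Big)\;\cong\;\bigoplus_{l\in L}\big(A\otimes_{\mathrm{gr}} R_{U_l}\big).
\]
I would verify this degreewise using the alternate definition $(M\otimes_{\mathrm{gr}}N)_r=\operatorname{colim}_{s+t\le r}(M_s\otimes_R N_t)$. Since $\otimes_R$ commutes with direct sums and colimits commute with colimits (in particular with direct sums), both sides agree in each degree $r$. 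Naturality in $A$ is clear, because every identification is induced by the canonical inclusions of summands.

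Next, applying $-\otimes_{\mathrm{gr}}\bigoplus_l R_{U_l}$ to the sequence gives, via this isomorphism, the direct sum over $l$ of the sequences
\[
0\to A\otimes_{\mathrm{gr}}R_{U_l}\to B\otimes_{\mathrm{gr}}R_{U_l}\to C\otimes_{\mathrm{gr}}R_{U_l}\to 0 .
\]
Each of these is exact by Corollary~\ref{cor}. A direct sum of exact sequences of graded modules is exact, since exactness is checked degreewise in $R$-modules, where kernels and images commute with direct sums. Hence the direct sum is $\otimes_{\mathrm{gr}}$-flat.

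An alternative route avoids the isomorphism. By Proposition~\ref{propcolim}, $\bigoplus_l R_{U_l}$ is the colimit of the diagram obtained as the disjoint union of the diagrams $\{R_{U_i}\}_{i\in U_l}$. Since all terms are graded free, Lemma~\ref{lembub} then applies directly.

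The only step needing care is the commutation of $\otimes_{\mathrm{gr}}$ with infinite direct sums. I expect this to be routine: in the graded-module description, $\otimes_{\mathrm{gr}}$ is $\otimes_{R[U_0]}$ of graded modules, which is a left adjoint and so preserves all colimits.
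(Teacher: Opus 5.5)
Your main line is the paper's own argument. The paper's entire proof is the remark that a direct sum of flat modules is flat, applied to Corollary~\ref{cor}. Your verification of that closure property is correct: $\otimes_{\mathrm{gr}}$ commutes with direct sums degreewise, and a direct sum of exact sequences is exact.

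You should drop the alternative route, though. Lemma~\ref{lembub} cannot hold for arbitrary colimits. For $P=\mathbb{Z}$, the cokernel $R_{\{0\}}$ of the inclusion $R_{U_1}\hookrightarrow R_{U_0}$ is the colimit of a diagram of two free modules (the inclusion and the zero map). Yet tensoring that same inclusion with $R_{\{0\}}$ gives the map $R\to 0$ in degree $1$, so $R_{\{0\}}$ is not flat. The lemma is only valid for filtered colimits, and a disjoint union of two or more nonempty diagrams is never filtered.

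More seriously, the same issue sits inside Corollary~\ref{cor}, which both you and the paper take as input, and there it is fatal. The diagram $\{R_{U_i}\}_{i\in U}$ of Proposition~\ref{propcolim} is filtered only when any two elements of $U$ have a common lower bound in $U$. Without this, $R_U$ need not be flat.

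Here is a counterexample. Take $P=\mathbb{Z}^2$ and $U=U_{(1,0)}\cup U_{(0,1)}$, so that $R_U$ is the ideal $(x,y)\subset R[x,y]$. Tensor the inclusion $R_U\hookrightarrow R_{U_{(0,0)}}=R[U_0]$ with $R_U$ and look at degree $(1,1)$.
\begin{itemize}
\item The only pairs $(s,t)$ with $s+t\le(1,1)$ and $(R_U)_s\otimes_R(R_U)_t\neq 0$ are $((1,0),(0,1))$ and $((0,1),(1,0))$. Both are maximal in the indexing poset.
\item Hence $(R_U\otimes_{\mathrm{gr}}R_U)_{(1,1)}\cong R^2$, while $(R_{U_{(0,0)}}\otimes_{\mathrm{gr}}R_U)_{(1,1)}\cong (R_U)_{(1,1)}=R$.
\item The induced map is $(a,b)\mapsto a+b$, whose kernel is isomorphic to $R$.
\end{itemize}
So the statement already fails for $|L|=1$.

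What your argument does prove is a corrected version. If every $U_l$ is downward directed, then $R_{U_l}$ is a filtered colimit of the free modules $R_{U_i}$, hence flat, since filtered colimits are exact and commute with $\otimes_{\mathrm{gr}}$. Your direct-sum step then gives flatness of $\bigoplus_{l}R_{U_l}$. Note that the upsets $U_\ell$ and $\overline U_\ell$ of Example~\ref{exx} are not downward directed, so this corrected version does not cover them.
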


Below we give a picture (Figure:\ref{fig:simple-tikz}) of flat modules which are not projective. 
\begin{example}\label{exx}

    Using the notation of Corollary~\ref{5.7}, we consider a family of $U_\ell$ and $\overline{U}_\ell$'s each is an upset defined in the following.
Let $\mathcal{L}$ be the union of the negative $x$--axis and the negative $y$--axis in $\mathbb{R}^2$.
For each $\ell \ge 0$, let $\mathcal{L}_\ell$ denote the translation of $\mathcal{L}$ by $(\ell,\ell)$.

Define $\overline{U}_\ell$ to be the upset
\[
\overline{U}_\ell
=
\bigl\{(x,y) \in \mathbb{R}^2 \mid (x,y) \ge (x_0,y_0) \text{ for some } (x_0,y_0) \in \mathcal{L}_\ell \bigr\},
\]
which is an upset.
Set
\(
U_\ell := \overline{U}_\ell \setminus \mathcal{L}_\ell .
\) Then each of $R_{U_\ell}$ and $R_{\overline{U}_\ell}$ is flat (by Corollary~\ref{cor}), and $R_{U_\ell}$ is not projective (using Lemma~\ref{lemfalt}), by taking $a$ to be $(\ell+1,\ell+1)$.
We now let $L_1, L_2 \subseteq \mathbb{R}_{\ge 0}$ be such that
$L_1 \cap L_2 = \varnothing$ and $0 \in L_1$.
In the following picture, we consider the direct sum
\[
\left( \bigoplus_{\ell \in L_1} R_{U_\ell} \right)
\;\oplus\;
\left( \bigoplus_{\ell \in L_2} R_{\overline{U}_\ell} \right).
\]
By Corollary~\ref{5.7}, this yields a flat module, which is not projective due to the presence of $R_{U_0}$.

   \begin{figure}[H]
\centering
\resizebox{0.42\textwidth}{!}{%
\begin{tikzpicture}
\begin{axis}[
    axis lines=middle,
    xmin=-4, xmax=6,
    ymin=-4, ymax=6,
    xlabel={$x$},
    ylabel={$y$},
    ticks=none,
    width=10cm,
    height=10cm,
]

\addplot[pattern=grid, pattern color=black, draw=none]
coordinates {(-4,3) (6,3) (6,6) (-4,6)};
\addplot[pattern=grid, pattern color=black, draw=none]
coordinates {(3,-4) (6,-4) (6,3) (3,3)};

\addplot[pattern=crosshatch, pattern color=black, draw=none]
coordinates {(-4,2) (6,2) (6,6) (-4,6)};
\addplot[pattern=crosshatch, pattern color=black, draw=none]
coordinates {(2,-4) (6,-4) (6,2) (2,2)};

\addplot[pattern=north east lines, pattern color=black!50, draw=none]
coordinates {(-4,1) (6,1) (6,6) (-4,6)};
\addplot[pattern=north east lines, pattern color=black!50, draw=none]
coordinates {(1,-4) (6,-4) (6,1) (1,1)};

\addplot[pattern=dots, pattern color=black, draw=none]
coordinates {(-4,0.05) (6,0.05) (6,6) (-4,6)};
\addplot[pattern=dots, pattern color=black, draw=none]
coordinates {(0.05,-4) (6,-4) (6,0) (0.05,0)};

\addplot[dashed, thick] coordinates {(-4,0) (6,0)};
\addplot[dashed, thick] coordinates {(0,-4) (0,6)};

\end{axis}
\end{tikzpicture}
}
\caption{Flat but not projective modules}
\label{fig:simple-tikz}
\end{figure}
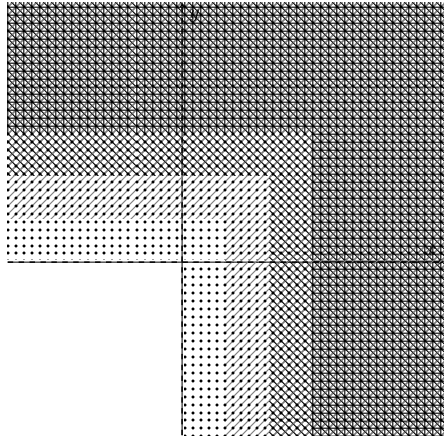
\end{example}

\section{Algorithms}\label{sec6}
In this section, we provide two algorithms to check whether a given persistence module over the field of real numbers $\mathbb{R}$, with indexing set $\mathbb{Z}$ or $\mathbb{Z}^2$, is graded free and to compute a basis. The construction of a basis is similar to that in Theorem~\ref{0}. In this section, however, the hypothesis of projectiveness is replaced by two sets of sufficient criteria. Under these criteria, we first prove graded freeness and then present the algorithms together with their complexities.

\subsection{\texorpdfstring{When the indexing set is $\mathbb{Z}$}
{When the indexing set is Z}}

If $M$ is a persistence module over $\mathbb{R}$ with $\mathbb{Z}$ as the indexing set, then $M$ is an $\mathbb{R}[t]$--module in the terminology of $\S$~\ref{sub23}, with $U_0=\mathbb{Z}_{\ge 0}$.

\begin{proposition}\label{6.1prop}
Let 
\(
M=\bigoplus_{i\in\mathbb{Z}} M_i
\) be a persistence module over $\mathbb{R}$ with the indexing set $\mathbb{Z}$ such that $\dim_{\mathbb{R}} M_i=d_i<\infty$ for all $i$. Assume:
\begin{enumerate}
\item $M_i=0$ for $i<\alpha$;
\item for any $i$, $f_{i,i+1}:M_i\to M_{i+1}$ is injective;
\item there exists $\beta \in\mathbb{Z}$ such that for $i>\beta$, $f_{i,i+1}$ is an isomorphism.
\end{enumerate}
Then $M$ is graded free.
\end{proposition}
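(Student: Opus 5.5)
Since all structure maps are injective, each $D_i$ equals $\operatorname{Im}(f_{i-1,i})$: an image from any $M_q$ with $q<i$ factors through $M_{i-1}$, because $f_{q,i}=f_{i-1,i}\circ f_{q,i-1}$. I will therefore work with the quotients $M_i/f_{i-1,i}(M_{i-1})$, which are finite-dimensional real vector spaces of dimension $b_i=d_i-d_{i-1}$. By hypothesis (1), $b_i=0$ for $i<\alpha$ (and $D_\alpha=0$). By hypothesis (3), $b_i=0$ for $i>\beta+1$. Hence only the finitely many degrees $\alpha\le i\le \beta+1$ contribute.

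For each such $i$, I choose $e^i_1,\dots,e^i_{b_i}\in M_i$ whose classes form a basis of $M_i/D_i$, and I let $\mathcal{B}$ be the (finite) union of these elements. The plan is to show that $\mathcal{B}$ is a homogeneous $\mathbb{R}[t]$-basis of $M$.

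\textbf{Spanning.} I prove by induction on $i$, starting at $i=\alpha$, that $\{t^{i-r}e^r_j : r\le i\}$ spans $M_i$. For $i<\alpha$ there is nothing to prove. Given $x\in M_i$, write $x=\sum_j c_je^i_j+f_{i-1,i}(y)$ with $y\in M_{i-1}$, and apply the induction hypothesis to $y$. Since $M_i=0$ for $i<\alpha$, the induction is well-founded.

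\textbf{Linear independence in each degree.} I show that $\{t^{i-r}e^r_j : r\le i\}$ is linearly independent in $M_i$, again by induction on $i$. Take a relation $\sum_{r\le i}\sum_j\lambda^r_j t^{i-r}e^r_j=0$. Every term with $r<i$ lies in $D_i$, so passing to $M_i/D_i$ and using independence of the $\overline{e^i_j}$ gives $\lambda^i_j=0$. What remains is $f_{i-1,i}(z)=0$, where
\[
z=\sum_{r\le i-1}\sum_j\lambda^r_j t^{i-1-r}e^r_j .
\]
Injectivity of $f_{i-1,i}$ (hypothesis (2)) gives $z=0$, and the induction hypothesis at degree $i-1$ then makes all remaining coefficients vanish.

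Together these show that the graded map $\bigoplus_{e\in\mathcal{B}}\mathbb{R}[t]\cdot\widetilde{e}\to M$ is bijective in every degree, so $M$ is graded free with basis $\mathcal{B}$. The main point needing care is independence. Injectivity is what lets a relation in degree $i$ descend to degree $i-1$. Hypothesis (1) anchors the induction, and finiteness of each $d_i$ makes the quotients $M_i/D_i$ finite-dimensional. Hypothesis (3) is not needed for freeness itself; it only guarantees that $\mathcal{B}$ is finite, which the algorithm later relies on.
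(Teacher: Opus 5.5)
Your proof is correct and is essentially the paper's argument. You use the same choice of $e^r_j$ lifting a basis of $M_r/\operatorname{Im}(f_{r-1,r})$, the same spanning by descent to degree $\alpha$, and the same independence step (kill the top-degree coefficients in the quotient by $D$, then use injectivity to drop a degree); the paper merely inducts on the number of terms instead of on the degree. You also skip the paper's detour through finite generation, torsion-freeness and the structure theorem over $\mathbb{R}[t]$, so your remark that hypothesis (3) only ensures $\mathcal{B}$ is finite, and is not needed for graded freeness, is right.
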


\begin{proof}
The persistence module $M$ is a module over $\mathbb{R}[t]$. For any $r\in\mathbb{Z}$ define
\(
D_r=\sum_{k<r}\operatorname{Im}(M_k\to M_r)=\operatorname{Im}(M_{r-1}\to M_r).
\)
There exists a subspace $C_r$ such that $M_r=D_r\oplus C_r$.  
Let $\{e^r_1,e^r_2,\dots,e^r_{b_r}\}$ be indecomposable elements as in Definition~\ref{3.11}, forming a basis of $C_r$.  
Due to (1) and (3), the set 
\(
\mathcal{B}=\{e^r_j:1\le j\le b_r,\ r\in\mathbb{Z}\}
\)
is finite.
Let $x\in M_k\setminus D_k$. Then there exist coefficients $c^k_j$ such that
\(
x-\sum_{j=1}^{b_k} c^k_j e^k_j=y\in D_k.
\)
If $y\neq0$, then there exists $x'\in M_\ell$ with $\ell<k$ such that
\(
t^{k-\ell}x'=y,
\; x'\notin D_\ell.
\)
We repeat the same process for $x'$. This stops after finitely many steps since $M_i=0$ for all $i<\alpha$.  
If $x\in D_k$, we apply the same argument. Hence any element of $M_k$ can be written in terms of the elements of $\mathcal{B}$. Therefore $M$ is finitely generated over $\mathbb{R}[t]$.
Since each $f_{i,i+1}$ is injective, it follows that $f_{i,j}$ is injective for $i<j$. This shows that $M$ is torsion free. Hence, by the fundamental theorem of finitely generated modules over a PID, $M$ is free.
It remains to show that $\mathcal{B}$ is a basis.

It is enough to show that
\[
\sum_{j=1}^{m} c^r_j\, t^{\,i-r}e^r_j=0
\tag{$*$}
\]
implies all $c^r_j=0$.
We prove this by induction on $m$. Assume the statement holds for all natural numbers less than $m$.  
Order the elements $e^r_j$ in ascending degree and let the last elements
\(
e^{\sigma}_{m-s+1},\dots,e^{\sigma}_{m}
\)
have the highest degree $\sigma$. From $(*)$ we obtain
\(
t^{\,i-\sigma}\Big(\sum_{j=1}^{m} c^r_j\, t^{\sigma-r}e^r_j\Big)=0,
\)
hence
\(
\sum_{j=1}^{m} c^r_j\, t^{\sigma-r}e^r_j=0
\)
by injectivity. Thus
\[
\sum_{j=m-s+1}^{m} c^\sigma_j e^\sigma_j
=
-\sum_{j=1}^{m-s} c^r_j\, t^{\sigma-r}e^r_j
\in D_\sigma.
\]
Passing to $M_\sigma/D_\sigma$ gives
\(
\sum_{j=m-s+1}^{m} c^\sigma_j \,\overline{e^\sigma_j}=0,
\)
so $c^\sigma_j=0$ for $j=m-s+1,\dots,m$. Hence
\(
\sum_{j=1}^{m-s} c^r_j\, t^{\sigma-r}e^r_j=0,
\)
and by the induction hypothesis all $c^r_j=0$.
Therefore $\mathcal{B}$ is a basis of $M$, and $M$ is graded free.
\end{proof}

We now state the algorithm to obtain a basis according to the construction given in Proposition~\ref{6.1prop}.

\begin{algorithm}[H]\label{1d}
\caption{Find an $\mathbb{R}[t]$-basis of $M$}
\KwIn{Matrices ${(A_i)}_{d_{i+1}\times d_i}$ for $\alpha\le i\le \beta$. }

\KwOut{An $\mathbb{R}[t]$-basis $B$ of $M$}

Initialize $B\leftarrow$ columns of $I_{d_\alpha}$\;

\For{$i=\alpha,\alpha+1,\dots,\beta$}{
    $R_i|E_i \leftarrow \operatorname{RREF}(A_i|I_{d_{i+1}})$\;
    $r_i \leftarrow$ number of nonzero rows of $R_i$\;

    \eIf{$r_i<d_i$}{
        \textbf{stop algorithm}\;
    }{
        $P_i \leftarrow$ truncate first $r_i$ columns from the left of $I_{d_{i+1}}$\;
        $B \leftarrow B \cup \big(E_i^{-1}\times$ columns of $P_i\big)$\;
    }
}
\Return{$B$}
\end{algorithm}
\begin{algorithm}[H]
\caption{RREF$(A)$}
\KwIn{$A\in\mathbb{R}^{m\times n}$}
\KwOut{$A$ in RREF}

\For{$i=1$ \KwTo $m$}{
 \If{row $i\neq 0$}{
  scale row $i$ to create pivot\;
  clear pivot column in all other rows\;
 }
}
swap rows to obtain echelon ordering\;
\Return{$A$}
\end{algorithm}
We now compute the complexity of the above algorithms.
\begin{proposition}
Let
\(
N=\beta-\alpha+1
\)
denote the number of graded indices and let
\(
D=\max_{\alpha\le i\le\beta} d_i .
\)
Then Algorithm~\ref{1d} terminates after at most
\(
O(ND^{3})
\)
arithmetic operations over $\mathbb{R}$.
\end{proposition}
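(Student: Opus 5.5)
The complexity bound comes from counting operations loop by loop. The main loop of Algorithm~\ref{1d} runs over $i=\alpha,\dots,\beta$, so it has exactly $N$ iterations; initializing $B$ with the columns of $I_{d_\alpha}$ costs $O(D^2)$ and does not matter. It therefore suffices to show that one iteration costs $O(D^3)$ arithmetic operations over $\mathbb{R}$, uniformly in $i$. Summing over the $N$ iterations then gives $O(ND^3)$.

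Within iteration $i$, the work splits into the following steps.

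\textbf{The RREF step.} The augmented matrix $(A_i\,|\,I_{d_{i+1}})$ has $d_{i+1}\le D$ rows and $d_i+d_{i+1}\le 2D$ columns. In the RREF subroutine, each of the at most $D$ rows is processed once. Scaling a row costs $O(2D)$, and clearing the pivot column in all other rows costs $O(D\cdot 2D)$. So the elimination costs $O(D\cdot D\cdot 2D)=O(D^3)$. The final reordering of rows is a permutation of at most $D$ rows, so it costs $O(D^2)$ data movement and no arithmetic.

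\textbf{The cheap steps.} Counting the nonzero rows to get $r_i$ takes $O(D^2)$ comparisons. The comparison $r_i<d_i$ takes $O(1)$. Forming $P_i$ by truncating columns of $I_{d_{i+1}}$ costs at most $O(D^2)$.

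\textbf{The inversion and multiplication.} The matrix $E_i$ is $d_{i+1}\times d_{i+1}$ and invertible, since it records the row operations. Inverting it by Gauss--Jordan elimination costs $O(D^3)$. Multiplying $E_i^{-1}$ by the at most $D$ columns of $P_i$ costs $O(D^3)$. Appending at most $D$ vectors to $B$ costs $O(D^2)$.

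Adding these gives $O(D^3)$ per iteration, as required.

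The one point that needs care is the inversion of $E_i$. One has to check that $E_i$ is a genuine square invertible matrix, so that the cubic Gauss--Jordan bound applies. One could also avoid the inversion entirely by accumulating the inverse row operations during the RREF, which again costs $O(D^3)$. Either way the per-iteration bound stands. If the algorithm halts early at the \textbf{stop} instruction, it performs fewer iterations, so the same upper bound holds.
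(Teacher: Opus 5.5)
Your argument is correct and is essentially the paper's: $N$ loop iterations, each dominated by an $O(D^3)$ Gaussian elimination on the $d_{i+1}\times(d_i+d_{i+1})$ augmented matrix, with every other per-iteration step costing at most $O(D^3)$. You are slightly more careful than the paper in explicitly charging $O(D^3)$ for obtaining $E_i^{-1}$, which the paper omits. One caveat, shared with the paper: the iteration $i=\beta$ involves $d_{\beta+1}$, which need not satisfy $d_{\beta+1}\le \max_{\alpha\le i\le\beta} d_i$, since $f_{\beta,\beta+1}$ is only assumed injective; strictly, $D$ should be taken as the maximum over $\alpha\le i\le\beta+1$.
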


\begin{proof}
We analyze the algorithm step by step. The initialization
\(
B \leftarrow \text{columns of } I_{d_\alpha}
\)
requires constructing an identity matrix of size at most $D$,
which costs $O(D^{2})$ operations.
The loop runs over $N=\beta-\alpha+1$ indices.
For each $i$, the algorithm computes
\(
\operatorname{RREF}(A_i \mid I_{d_{i+1}}).
\)
Here
\(
A_i \in \mathbb{R}^{\,d_{i+1}\times d_i},
\)
and therefore the augmented matrix has size
\(
d_{i+1}\times (d_i+d_{i+1}).
\)
Since $d_i,d_{i+1}\le D$, this matrix has at most
\(
D \text{ rows and } 2D \text{ columns}.
\)
Computing the row-reduced echelon form of such a matrix via
Gaussian elimination requires $O(D^{3})$ arithmetic operations (see \cite{jacquard2023barcode}).
The additional steps in each iteration consist of counting nonzero rows, truncating columns of an identity matrix, multiplying $E_i^{-1}$ with selected columns and updating the set $B$; each bounded by $O(D^{2})$ operations.
These are dominated by the RREF computation.

Since the loop executes $N$ times and each iteration costs
$O(D^{3})$ operations, the overall complexity is
\(
O(ND^{3}).
\)
\end{proof}

\subsection{\texorpdfstring{When the indexing set is $\mathbb{Z}^2$}
{When the indexing set is Z2}}

Consider $M$ to be a persistence module over $\mathbb{R}$ with $\mathbb{Z}^2$ as the indexing set. Then there are real linear maps 
\[
f_{(i,j)}^{(i+r,j+s)} : M_{i,j} \to M_{i+r,j+s},
\]
which are compositions of horizontal maps from $M_{i,j}$ to $M_{i+1,j}$ and vertical maps from $M_{i,j}$ to $M_{i,j+1}$. We denote the linear maps from $M_{i,j}$ to $M_{i+r,j}$ by ${}_{j}H_i^{\,r}$ and the linear maps from $M_{i,j}$ to $M_{i,j+s}$ by ${}_{i}V_j^{\,s}$.

To ensure the freeness of a given persistence module, we need to add one criterion which is similar to injectivity in the context of the indexing set $\mathbb{Z}$. To simplify this, we obtain the following result.
\begin{lemma}\label{lem2d}
Assume that
\(
\operatorname{Im}({}_{j+1}H_{i}^{\,1})
\cap
\operatorname{Im}({}_{i+1}V_{j}^{\,1})
=
\operatorname{Im}\!\big(f^{(i+1,j+1)}_{(i,j)}\big),
\)
where $f^{(i+1,j+1)}_{(i,j)}={}_{i+1}V_j^1 \circ{}_jH_i^1={}_{j+1}H_i^1\circ {}_iV_j^1$.
Then for any $r,s\in\mathbb{N}$,
\(
\operatorname{Im}({}_{j+s}H_{i}^{\,r})
\cap
\operatorname{Im}({}_{i+r}V_{j}^{\,s})
=
\operatorname{Im}\!\big(f^{(i+r,j+s)}_{(i,j)}\big).
\)
\end{lemma}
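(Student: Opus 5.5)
The inclusion $\supseteq$ is immediate. The two ways of going from $(i,j)$ to $(i+r,j+s)$ agree by functoriality:
\[
f^{(i+r,j+s)}_{(i,j)}={}_{i+r}V_j^{\,s}\circ{}_jH_i^{\,r}={}_{j+s}H_i^{\,r}\circ{}_iV_j^{\,s}.
\]
So every element of $\operatorname{Im}(f^{(i+r,j+s)}_{(i,j)})$ lies in both images. The work is in $\subseteq$. I will read the hypothesis as holding at every $(i,j)\in\mathbb{Z}^2$, which is how it will be used later as a freeness criterion. I plan to argue by a double induction: first on $r$ with $s=1$, then on $s$ for arbitrary $r$. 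The idea is to glue unit squares, first into a horizontal strip of height one and then stack such strips vertically.

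\textbf{Step 1 (the case $s=1$, induction on $r$).} Let $x\in M_{i+r,j+1}$ with $x={}_{j+1}H_i^{\,r}(y)$ for some $y\in M_{i,j+1}$, and $x={}_{i+r}V_j^{\,1}(z)$ for some $z\in M_{i+r,j}$. Put $y'={}_{j+1}H_i^{\,r-1}(y)\in M_{i+r-1,j+1}$. Then $x={}_{j+1}H_{i+r-1}^{\,1}(y')={}_{i+r}V_j^{\,1}(z)$, so the unit-square hypothesis at $(i+r-1,j)$ gives $w\in M_{i+r-1,j}$ with
\[
x={}_{j+1}H^{1}_{i+r-1}\big({}_{i+r-1}V^1_j(w)\big).
\]
To apply the induction hypothesis to the square $(i,j)\to(i+r-1,j+1)$, I need
\[
y'={}_{i+r-1}V_j^{\,1}(w).
\]
This would follow from $x$ having two preimages under ${}_{j+1}H^1_{i+r-1}$ provided that map is injective. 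Granting this, $y'$ lies in both $\operatorname{Im}({}_{j+1}H_i^{\,r-1})$ and $\operatorname{Im}({}_{i+r-1}V_j^{\,1})$, so by induction $y'=f^{(i+r-1,j+1)}_{(i,j)}(v)$ for some $v\in M_{i,j}$. Applying one more horizontal map gives $x=f^{(i+r,j+1)}_{(i,j)}(v)$.

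\textbf{Step 2 (induction on $s$).} Fix $r$ and suppose the statement holds for $(r,s-1)$. Let $x={}_{j+s}H_i^{\,r}(y)={}_{i+r}V_j^{\,s}(z)$ with $y\in M_{i,j+s}$ and $z\in M_{i+r,j}$. Put $z'={}_{i+r}V_j^{\,s-1}(z)\in M_{i+r,j+s-1}$. Then $x$ lies in both images for the strip from $(i,j+s-1)$ to $(i+r,j+s)$. Step 1, applied at base point $(i,j+s-1)$, gives $u\in M_{i,j+s-1}$ with $x=f(u)$. Injectivity of the vertical map ${}_{i+r}V^1_{j+s-1}$ then forces $z'={}_{j+s-1}H_i^{\,r}(u)$. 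Hence $z'$ lies in both images for the rectangle from $(i,j)$ to $(i+r,j+s-1)$, and the induction hypothesis writes $z'=f^{(i+r,j+s-1)}_{(i,j)}(v)$. Pushing up once more finishes the proof.

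\textbf{Main obstacle.} The delicate point is the cancellation step in both inductions. It uses injectivity of the unit horizontal and vertical maps, which is the analogue of hypothesis~(2) in Proposition~\ref{6.1prop}. I expect this to be a standing assumption in this subsection, and I will state it explicitly. Without it, the plan above breaks: $y'$ and ${}_{i+r-1}V^1_j(w)$ may differ by an element of $\ker({}_{j+1}H^1_{i+r-1})$. In that case the fallback I would try first is to choose $w$ more cleverly, by applying the unit-square hypothesis to the difference and absorbing the kernel element. I would only pursue this if injectivity turns out not to be available.
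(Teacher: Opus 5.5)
Your proposal is correct and is essentially the paper's proof. The paper writes out only the case $r=2$, $s=1$: it applies the unit-square hypothesis at $(i+1,j)$ and then at $(i,j)$, cancelling with injectivity of the horizontal maps exactly as in your Step 1, and it leaves the general case as ``similar diagram chasing,'' which your induction on $r$ and then on $s$ carries out. You were also right that the argument needs the hypothesis at every unit square and injectivity of the unit maps; the paper uses both without stating them in the lemma (injectivity holds on $R$ in Proposition~\ref{propalg2d}), so no fallback is needed, and without injectivity the lemma can in fact fail.
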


\begin{proof}
It is enough to prove the case $r=2$, $s=1$, since the general case follows by similar diagram chasing.
\[
\begin{tikzcd}[row sep=3.5em, column sep=4em]
M_{i,j+1} 
  \arrow[r,"{}_{j+1}H_i^{\,1}"] 
&
M_{i+1,j+1} 
  \arrow[r,"{}_{j+1}H_{i+1}^{\,1}"] 
&
M_{i+2,j+1}
\\
M_{i,j}
  \arrow[r,"{}_{j}H_i^{\,1}"']
  \arrow[u,"{}_{i}V_j^{\,1}"'] 
  \arrow[ur, dashed, "f_{i,j}^{i+1,j+1}"']
&
M_{i+1,j}
  \arrow[r,"{}_{j}H_{i+1}^{\,1}"']
  \arrow[u,"{}_{i+1}V_j^{\,1}"'] 
  \arrow[ur, dashed, "f_{i+1,j}^{i+2,j+1}"']
&
M_{i+2,j}
\arrow[u,"{}_{i+2}V_j^{\,1}"']
\end{tikzcd}
\]
Let 
\(
x\in
\operatorname{Im}({}_{j+1}H_{i}^{\,2})
\cap
\operatorname{Im}({}_{i+2}V_{j}^{\,1}).
\)
Then $x={}_{j+1}H_{i}^{\,2}(y)$ for some $y\in M_{i,j+1}$.
Hence
\[
{}_{j+1}H_{i+1}^{\,1}\big({}_{j+1}H_{i}^{\,1}(y)\big)
\in
\operatorname{Im}({}_{j+1}H_{i+1}^{\,1})
\cap
\operatorname{Im}({}_{i+2}V_{j}^{\,1}).
\]
By the hypothesis and injectivity of ${}_{j+1}H_{i+1}^{\,1}$,
\(
{}_{j+1}H_{i}^{\,1}(y)
\in
\operatorname{Im}({}_{i+1}V_{j}^{\,1}).
\)
Again applying the hypothesis to the first square of the commutative diagram and using injectivity of ${}_{j+1}H_{i}^{\,1}$, there exists $z\in M_{i,j}$ such that
\(
y={}_{i}V_{j}^{\,1}(z).
\)
Therefore,
\[
x
=
{}_{j+1}H_{i}^{\,2}(y)
=
{}_{j+1}H_{i}^{\,2}\big({}_{i}V_{j}^{\,1}(z)\big)
=
f^{(i+2,j+1)}_{(i,j)}(z).
\]
Hence
\(
x\in \operatorname{Im}\!\big(f^{(i+2,j+1)}_{(i,j)}\big),
\)
which proves the claim.
\end{proof}

We now state a set of sufficient conditions for a persistence module to be free. Since the indexing set is $\mathbb{Z}^2$, according to the terminology of $\S$~\ref{sub23} we have $U_0=\mathbb{Z}_{\ge 0}\times\mathbb{Z}_{\ge 0}$. Hence $M$ is a graded $\mathbb{R}[x,y]$--module.

\begin{proposition}\label{propalg2d}
Let $\alpha<\beta$ and $\gamma<\delta$.  
Suppose \(
A=\{(i,j)\mid i<\alpha\},\;
B=\{(i,j)\mid i>\beta\},\;
C=\{(i,j)\mid j<\gamma\},\;
D=\{(i,j)\mid j>\delta\}.
\) Let $M_{ij}=0$ for all $(i,j)\in A\cup C$ and $\dim_{\mathbb{R}}M_{ij}=d_{ij}<\infty$ for all $i,j.$ Let the maps
\(
{}_{j}H_{i}^{\,1}:M_{i,j}\to M_{i+1,j},
\;
{}_{i}V_{j}^{\,1}:M_{i,j}\to M_{i,j+1}
\)
be given by\\
\(
{}_{j}H_{i}^{\,1}=
\begin{cases}
0, & (i,j)\in A\cup C,\\[2mm]
\text{an isomorphism}, & (i,j)\in B\setminus C,\\[2mm]
{}_{\delta}H_{i}^{\,1}, & (i,j)\in D\setminus(A\cup B),
\end{cases}\) and \\\(
 {}_{i}V_{j}^{\,1}=
\begin{cases}
0, & (i,j)\in A\cup C,\\[2mm]
\text{an isomorphism}, & (i,j)\in D\setminus A,\\[2mm]
{}_{\beta}V_{j}^{\,1}, & (i,j)\in B\setminus(C\cup D).
\end{cases}
\)\\
Set
\(
R=\mathbb{Z}^2\setminus (A\cup B\cup C\cup D).
\)
On the rectangle $R$ all the maps are injective. Assume further that
\[
\operatorname{Im}({}_{j+1}H_{i}^{\,1})\cap
\operatorname{Im}({}_{i+1}V_{j}^{\,1})
=
\operatorname{Im}\!\big(f_{(i,j)}^{(i+1,j+1)}\big)
\]
for all $(i,j)\in R$.
Then $M$ is a graded free.
\end{proposition}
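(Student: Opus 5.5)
Following Proposition~\ref{6.1prop}, I would take indecomposables as generators and then check linear independence. For $(i,j)\in\mathbb{Z}^2$ put
\[
D_{ij}=\operatorname{Im}({}_{j}H_{i-1}^{\,1})+\operatorname{Im}({}_{i}V_{j-1}^{\,1}).
\]
Any image from $(k,l)<(i,j)$ factors through $(i-1,j)$ or $(i,j-1)$, so this agrees with the definition of $D_r$ in Section~\ref{sec3}. Choose a complement $C_{ij}$ with $M_{ij}=D_{ij}\oplus C_{ij}$ and a basis $\{e^{ij}_1,\dots,e^{ij}_{b_{ij}}\}$ of $C_{ij}$.

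The first step is to show that only finitely many $b_{ij}$ are nonzero, i.e.\ $C_{ij}=0$ outside a bounded box. On $A\cup C$ we have $M_{ij}=0$. For $(i,j)\in B$ with $i\ge\beta+2$ and $j\ge\gamma$, the map ${}_{j}H_{i-1}^{\,1}$ is an isomorphism, so $D_{ij}=M_{ij}$. The same holds in $D$ via the vertical isomorphisms. The strips $i=\beta+1$ and $j=\delta+1$ need separate treatment. At $(\beta+1,j)$ with $\gamma\le j\le\delta$, I would use the prescribed shape of the maps, namely that the vertical maps there equal ${}_{\beta}V$ transported along the horizontal isomorphisms. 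From this I expect that the incoming horizontal map from $(\beta,j)$ is onto, or at least that only finitely many boundary points contribute. So the generating set $\mathcal{B}$ is finite and supported in $[\alpha,\beta+1]\times[\gamma,\delta+1]$.

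Spanning is proved by induction on degree, which is well founded because the support is bounded below by $(\alpha,\gamma)$. Given $x\in M_{ij}$, write $x=\sum c_k e^{ij}_k+h+v$ with $h\in\operatorname{Im}({}_{j}H_{i-1}^{\,1})$ and $v\in\operatorname{Im}({}_{i}V_{j-1}^{\,1})$. Pull $h$ and $v$ back to degrees $(i-1,j)$ and $(i,j-1)$ and recurse. Since $M$ vanishes on $A\cup C$, the recursion stops after finitely many steps.

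Linear independence I would prove, as in Proposition~\ref{6.1prop}, by induction on the number of terms in a relation
\[
\sum c^{r}_{k}\,t^{\,(i,j)-r}e^{r}_k=0 .
\]
First I need all structure maps $M_{a}\to M_{b}$ with $a\le b$ to be injective. Inside $R$ this is given; on $B$ and $D$ the maps are isomorphisms or copies of injective maps on the boundary of $R$. Injectivity lets me pull the relation back to the join of the degrees that occur. Choose a maximal degree $\sigma$ among those appearing. I then need to show that every other term $t^{\sigma-r}e^r_k$ with $r\ne\sigma$ lands in $D_\sigma$. This is not automatic, since $r$ and $\sigma$ may be incomparable, in which case the term was only pushed to a common upper bound. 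This is the main obstacle, and the intersection hypothesis together with Lemma~\ref{lem2d} is what handles it. The relation shows that the sum of the $\sigma$-terms, pushed to the join $\tau$, lies in the image coming from the other degrees. I would decompose that image by the two coordinate directions and apply Lemma~\ref{lem2d}: an element of $\operatorname{Im}(H^{r})\cap\operatorname{Im}(V^{s})$ comes from the corner. Combined with injectivity, this should show that the $\sigma$-part already lies in $D_\sigma$ before being pushed. Passing to $M_\sigma/D_\sigma=C_\sigma$ then forces $c^\sigma_k=0$, and induction finishes the argument. Hence $\mathcal{B}$ is a homogeneous basis and $M$ is graded free.
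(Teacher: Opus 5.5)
Your proposal is correct and follows essentially the paper's route: the indecomposables $e^{ij}_k$ serve as generators, spanning is by descending recursion on degree (well founded since $M$ vanishes on $A\cup C$), and linear independence comes from pulling the relation back to the join and peeling off a maximal degree using injectivity and Lemma~\ref{lem2d}. Your uniform choice of a maximal $\sigma$ is in fact a cleaner organization than the paper's Case~1/Case~2 split.

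The step you flag as the main obstacle does close. Write $\sigma=(\sigma_1,\sigma_2)$ and $\tau=(\tau_1,\tau_2)$, and split the remaining terms as $a+b$, with $a$ coming from $M_{(\sigma_1-1,\tau_2)}$ and $b$ from $M_{(\tau_1,\sigma_2-1)}$. Lemma~\ref{lem2d} on the rectangle from $(\sigma_1,\sigma_2-1)$ to $\tau$ puts $b$ in the image of $M_{(\sigma_1,\sigma_2-1)}$. Injectivity then cancels the horizontal push. A second application of Lemma~\ref{lem2d}, on the rectangle from $(\sigma_1-1,\sigma_2)$ to $(\sigma_1,\tau_2)$, gives $\sum_k c^\sigma_k e^\sigma_k\in D_\sigma$. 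Every unit square involved lies in $R$ or touches the zero region, so the intersection hypothesis on $R$ suffices.
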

\begin{proof}
In each $M_{k,\ell}$ choose elements $e^{(k,\ell)}_j$, $j=1,\dots,b_{k,\ell}$, such that 
$\{\overline{e^{(k,\ell)}_j}\}$ form a basis of $M_{k,\ell}/D_{k,\ell}$.  
Note that $M_{k,\ell}/D_{k,\ell}=0$ for all $(k,\ell)\notin R$.
Let $z\in M_{r,s}$ with $r,s\ge 0$. Then
\(
z=\operatorname{Im}(x^{i}y^{j}(z'))
\)
for some $z'\in M_{r',s'}$, where $(r',s')$ lies on the boundary of $R$, since from $(r',s')$ to $(r,s)$ one can pass through isomorphisms. Hence it is enough to consider $z\in M_{r,s}$ with $(r,s)\in R$.
If $z\notin D_{r,s}$, choose $\{e^{(r,s)}_j\}$ such that
\[
z-\sum_{j=1}^{b_{r,s}} c^{(r,s)}_j e^{(r,s)}_j = z_1 \in D_{r,s}.
\]
Then
\(
z_1 = {}_{s}H^{\,1}_{r-1}(y_1) + {}_{r}V^{\,1}_{s-1}(y_2),
\)
where $|y_1|, |y_2| < |z|$.  
We repeat the same procedure for $y_1$ and $y_2$.  
Since the number of elements in $R$ is finite and $M_{k,\ell}=0$ for $(k,\ell)\in A\cup C$, the process terminates after finitely many steps.
It remains to prove uniqueness.

It is enough to prove that
\[
\sum_{j=1}^{b} c_{j}^{(k_j,l_j)} x^{\,r-k_j} y^{\,s-l_j} e_{j}^{(k_j,l_j)} = 0 
\tag{\#}
\]
implies all coefficients $c_{j}^{(k_j,l_j)}=0$.
Let 
\[
S:=\{(k_j,l_j)\;:\; \text{those indices which occur as indices of } 
e_{j}^{(k_j,l_j)} \text{ in (\#)}\}.
\]
Write 
\(
\max S=(M_1,M_2), 
\;
\min S=(m_1,m_2).
\)
Observe that $(M_1,M_2),(m_1,m_2)\in R$.
Suppose $(r,s)$ is either on the right of or above $R$. Then we can write (\#) as
\[
x^{\,r-M_1} y^{\,s-M_2}
\left(
\sum_{j=1}^{b} c_{j}^{(k_j,l_j)}
x^{\,M_1-k_j} y^{\,M_2-l_j}
e_{j}^{(k_j,l_j)}
\right)=0.
\]
By injectivity of the maps $H$ and $V$, this implies
\(
\sum_{j=1}^{b} c_{j}^{(k_j,l_j)}
x^{\,M_1-k_j} y^{\,M_2-l_j}
e_{j}^{(k_j,l_j)}=0.
\)
Hence it is sufficient to choose $(r,s)=(M_1,M_2)$.

\medskip
\noindent
\textbf{Case 1.}
There exists $a\in\{1,2,\dots,b\}$ such that
\(
(k_j,l_j)=(M_1,M_2)
\; \text{for } j=a+1,a+2,\dots,b,
\)
that is, $(M_1,M_2)$ occurs at a highest degree in the summation of (\#).
Then
\[
\sum_{j=a+1}^{b} c_{j}^{(k_j,l_j)} e_{j}^{(k_j,l_j)}
=
-\sum_{j=1}^{a} c_{j}^{(k_j,l_j)}
x^{\,M_1-k_j} y^{\,M_2-l_j}
e_{j}^{(k_j,l_j)}
\in D_{M_1,M_2}.
\]
This easily implies $c_{j}^{(k_j,l_j)}=0$ for $j=a+1,\dots,b$. What remains to prove, will be proved in Case 2.

\medskip
\noindent
\textbf{Case 2.}
Suppose $a=b$. For two elements the base case is
\(
c_{1}^{(k_1,l_1)} x^{\,M_1-k_1} y^{\,M_2-l_1} e_{1}^{(k_1,l_1)}
+
c_{2}^{(k_2,l_2)} x^{\,M_1-k_2} y^{\,M_2-l_2} e_{2}^{(k_2,l_2)}
=0.
\)
By injectivity of the maps and using Lemma~\ref{lem2d} for the commutative diagram
with corner points $M_{(m_1,m_2)}$, $M_{(k_1,l_1)}$, $M_{(k_2,l_2)}$ and
$M_{(M_1,M_2)}$, it follows that if $c_{1}^{(k_1,l_1)}\neq 0$, then
$e_{1}^{(k_1,l_1)}$ is decomposable, which is a contradiction. Hence
$c_{1}^{(k_1,l_1)}=0$.
It is enough to prove in the following case: some of the coefficients in (\#) do not contain either a positive power of $x$ or a positive power of $y$.  
We take the terms that contain positive powers of $x$ on the left side and the terms that do not contain a positive power of $x$ on the right side. Thus (\#) can be written as
\(
x^{i}(E)=y^{j}(F),
\)
where $F$ is indecomposable in the corresponding degree.  
Using Lemma~\ref{lem2d} for the commutative diagram with corner points
$M_{(M_1,M_2)}$, $M_{(M_1-i,M_2)}$, $M_{(M_1,M_2-j)}$, and $M_{(M_1-i,M_2-j)}$,
together with the injectivity of the maps, we obtain that $F$ lies in the image of
$M_{(M_1-i,M_2-j)}$, and hence is decomposable, which is a contradiction.

\end{proof}
First, we need to check whether the hypotheses of Proposition \ref{propalg2d} are satisfied. The hypothesis \(
\operatorname{Im}({}_{j+1}H_{i}^{\,1})\cap
\operatorname{Im}({}_{i+1}V_{j}^{\,1})
=
\operatorname{Im}\!\big(f_{(i,j)}^{(i+1,j+1)}\big)
\) is equivalent to the fact that $\dim(\operatorname{Im}({}_{j+1}H_{i}^{\,1})\cap
\operatorname{Im}({}_{i+1}V_{j}^{\,1}))=\dim(\operatorname{Im}({}_{j+1}H_{i}^{\,1}))+\dim(
\operatorname{Im}({}_{i+1}V_{j}^{\,1}))-\dim(\operatorname{Im}({}_{j+1}H_{i}^{\,1})+
\operatorname{Im}({}_{i+1}V_{j}^{\,1}))$ is equal to $d_{i,j}.$

\begin{algorithm}[H]\label{2d}
\caption{Find an $\mathbb{R}[x,y]$-basis of $M$}
\KwIn{Matrices $({}_{j}H_i^1)_{d_{i+1,j}\times d_{i,j}}$ and 
$({}_{i}V_j^1)_{d_{i,j+1}\times d_{i,j}}$ for $\alpha\le i\le\beta$ and $\gamma\le j\le\delta$}
\KwOut{An $\mathbb{R}[x,y]$-basis $B$}
Initialize $B\leftarrow$ columns of $I_{d_{\alpha,\gamma}}$\;
\For{$i=\alpha,\alpha+1,\dots,\beta$}{
\For{$j=\gamma,\gamma+1,\dots,\delta$}{
\If{${}_{i+1}V_j^1 \times{}_jH_i^1\neq{}_{j+1}H_i^1\times {}_iV_j^1$}{\textbf{stop algorithm }\;}
$\overline{{}_{j}H_i^1}\leftarrow\operatorname{RREF}({}_{j}H_i^1)$\;
$h_{i,j} \leftarrow$ number of nonzero rows of $\overline{{}_{j}H_i^1}$\;
\If{$h_{i,j}<d_{i,j}$}{\textbf{stop algorithm }\;}
$\overline{{}_{i}V_j^1}\leftarrow\operatorname{RREF}({}_{i}H_j^1)$\;
$v_{j,i} \leftarrow$ number of nonzero rows of $\overline{{}_{i}V_j^1}$\;
\If{$v_{j,i}<d_{i,j}$}{\textbf{stop algorithm }\;}
$R_{i,j}\mid E_{i,j}
\leftarrow
\operatorname{RREF}\big(({}_{j+1}H_i^1\mid{}_{i}V_{j+1}^1)\mid I_{d_{i+1,j+1}}\big)$\;

$r_{i,j}\leftarrow$ number of nonzero rows of $R_{i,j}$\;

\eIf{$r_{i,j}<d_{i+1,j}+d_{i,j+1}-d_{i,j}$}{
\textbf{stop algorithm}\;
}{$P_{i,j}\leftarrow$ truncate first $r_{i,j}$ columns from the left of $I_{d_{i+1,j+1}}$\;

$B\leftarrow B\cup\big(E_{i,j}^{-1}\times$ columns of $P_{i,j}\big)$\;}

}
}

\Return{$B$}
\end{algorithm}
We now compute the complexity of the above algorithm.
\begin{proposition}
Let
\(
N=(\beta-\alpha+1)(\delta-\gamma+1)
\)
denote the number of bi-graded indices and set
\(
D=\max_{i,j} d_{i,j}.
\)
Then Algorithm~\ref{2d} performs at most
\(
O(ND^{3})
\)
arithmetic operations over $\mathbb{R}$.
\end{proposition}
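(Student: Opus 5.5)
We bound the cost of Algorithm~\ref{2d} the same way as for Algorithm~\ref{1d}: count the iterations of the double loop, bound the work in one iteration, and multiply. The initialization $B\leftarrow$ columns of $I_{d_{\alpha,\gamma}}$ builds an identity matrix of size at most $D$, which costs $O(D^2)$. The nested loops run over $i=\alpha,\dots,\beta$ and $j=\gamma,\dots,\delta$, so they execute exactly $N=(\beta-\alpha+1)(\delta-\gamma+1)$ times. It therefore suffices to show that one pass through the loop body costs $O(D^3)$ operations. The initialization is then absorbed, since $N\ge 1$.

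Inside the body, the steps are as follows.

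\emph{Commutativity test.} We form the two products ${}_{i+1}V_j^1\times{}_jH_i^1$ and ${}_{j+1}H_i^1\times{}_iV_j^1$ and compare them. Each factor has dimensions bounded by $D\times D$, so each product costs $O(D^3)$ by naive multiplication. The entrywise comparison costs $O(D^2)$.

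\emph{Row reductions.} There are three RREF computations. The first two are on ${}_jH_i^1$ and ${}_iV_j^1$, each of size at most $D\times D$. The third is on the augmented matrix $\big(({}_{j+1}H_i^1\mid{}_iV_{j+1}^1)\mid I_{d_{i+1,j+1}}\big)$. This matrix has $d_{i+1,j+1}\le D$ rows and at most $4D$ columns. Gaussian elimination on an $m\times n$ matrix costs $O(mn\min(m,n))$, as cited from \cite{jacquard2023barcode} in the $\mathbb{Z}$ case. Hence each of the three reductions costs $O(D^3)$.

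\emph{Bookkeeping.} Counting nonzero rows costs $O(D^2)$, and the comparisons with $d_{i,j}$ or with $d_{i+1,j}+d_{i,j+1}-d_{i,j}$ cost $O(1)$. Truncating columns of $I_{d_{i+1,j+1}}$ costs $O(D^2)$.

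\emph{Basis update.} We need $E_{i,j}^{-1}$, where $E_{i,j}$ is an invertible $D\times D$ matrix. Computing it by a further Gauss--Jordan elimination costs $O(D^3)$. Multiplying it against at most $D$ columns also costs $O(D^3)$. Adjoining the results to $B$ costs $O(D^2)$.

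There is a fixed number of such steps, each $O(D^3)$, so one iteration costs $O(D^3)$. Over $N$ iterations the total is $O(ND^3)$. This bound also covers runs that halt early at a ``stop algorithm'' line.

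The only point requiring care is the augmented matrix in the third reduction. Its column count is a sum of several dimensions, but every summand is bounded by $D$, so the width is at most a constant multiple of $D$. That constant only rescales the $O(D^3)$ elimination cost and does not change the bound. As in the $\mathbb{Z}$ case, the inverse $E_{i,j}^{-1}$ must also be charged to the iteration, and it is absorbed for the same reason.
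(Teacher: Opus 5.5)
Your proof is correct and follows the same route as the paper: an $O(D^2)$ initialization, an $O(D^3)$ charge per loop iteration for the two matrix products, the three row reductions (on matrices with at most $D$ rows and a constant multiple of $D$ columns) and the bookkeeping, multiplied over the $N$ iterations. Your explicit $O(D^3)$ charge for computing $E_{i,j}^{-1}$ is slightly more careful than the paper, which lists the products with $E_{i,j}^{-1}$ among the $O(D^2)$ steps and never accounts for forming the inverse; this does not change the final $O(ND^3)$ bound.
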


\begin{proof}
We estimate the cost of each step of the algorithm. Constructing the identity matrix
$I_{d_{\alpha,\gamma}}$
requires at most $O(D^{2})$ operations.
For the double loop, the indices $(i,j)$ range over at most
\(
N=(\beta-\alpha+1)(\delta-\gamma+1)
\)
pairs:\\
\textit{(i) Commutativity test.}
The condition
\(
{}_{i+1}V_j^1\,{}_jH_i^1
=
{}_{j+1}H_i^1\,{}_iV_j^1
\)
requires multiplication of matrices whose dimensions are bounded by $D$.
Each multiplication costs $O(D^{3})$ operations.\\
\textit{(ii) Rank verification.}
The matrices
${}_{j}H_i^1$ and ${}_{i}V_j^1$
have size at most $D\times D$.
Computing their row-reduced echelon forms therefore requires
$O(D^{3})$ operations each (see \cite{jacquard2023barcode}).\\
\textit{(iii) Kernel computation.}
The algorithm computes
\(
\operatorname{RREF}
\big(({}_{j+1}H_i^1 \mid {}_{i}V_{j+1}^1)
\mid I_{d_{i+1,j+1}}\big).
\)
The augmented matrix has at most
$D$ rows and $3D$ columns.
Gaussian elimination on such a matrix requires
$O(D^{3})$ arithmetic operations (see \cite{jacquard2023barcode}).\\
\textit{(iv) Remaining operations.}
Counting nonzero rows, truncating identity matrices,
matrix multiplications with $E_{i,j}^{-1}$,
and updating the set $B$
each require at most $O(D^{2})$ operations
and are dominated by the reduction step above.

Since each iteration of the double loop costs
$O(D^{3})$ operations and there are at most $N$
iterations, the total running time is
\(
O(ND^{3}).
\)
\end{proof}

\section*{Acknowledgements}
Giriraj Ghosh gratefully acknowledges the financial support provided by the University Grants Commission (UGC), Government of India, through a doctoral fellowship (NTA Ref. No. 231610064231). The authors thank Bodhayan Roy for his suggestions on the algorithm part of the paper.


\begin{thebibliography}{99}

\bibitem{blanchette2023exact}
B.~Blanchette, T.~Br\"ustle, and E.~J.~Hanson,
\newblock Exact structures for persistence modules,
\newblock \href{https://arxiv.org/html/2308.01790v3}
{arxiv:2308.01790v3}.

\bibitem{bubenik2014categorification}
P.~Bubenik and J.~A.~Scott,
\newblock Categorification of persistent homology,
\newblock {\em Discrete \& Computational Geometry}, 51(3):600--627, 2014.
\newblock \url{https://link.springer.com/article/10.1007/s00454-014-9573-x}

\bibitem{bubenik2015metrics}
P.~Bubenik, V.~de~Silva, and J.~Scott,
\newblock Metrics for generalized persistence modules,
\newblock {\em Foundations of Computational Mathematics}, 15(6):1501--1531, 2015.
\newblock \href{https://doi.org/10.1007/s10208-014-9229-5}{doi:10.1007/s10208-014-9229-5}.

\bibitem{bubenik2021homological}
P.~Bubenik and N.~Mili\`cevi\'c,
\newblock Homological algebra for persistence modules,
\newblock {\em Foundations of Computational Mathematics}, 21(5):1233--1278, 2021.
\newblock \href{https://link.springer.com/article/10.1007/s10208-020-09482-9}{doi:10.1007/s10208-020-09478-8}.

\bibitem{carlsson2009multidimensional}
G.~Carlsson and A.~Zomorodian,
\newblock The theory of multidimensional persistence,
\newblock {\em Discrete \& Computational Geometry}, 42(1):71--93, 2009.
\newblock \href{https://doi.org/10.1007/s00454-009-9176-0}{doi:10.1007/s00454-009-9176-0}.

\bibitem{chazal2016structure}
F.~Chazal, V.~de~Silva, M.~Glisse, and S.~Oudot,
\newblock The structure and stability of persistence modules,
\newblock {\em SpringerBriefs in Mathematics},
\newblock Springer, Cham, 2016.
\newblock \href{https://doi.org/10.1007/978-3-319-42545-0}{doi:10.1007/978-3-319-42545-0}.

\bibitem{crawleyboevey2015decomposition}
W.~Crawley-Boevey,
\newblock Decomposition of pointwise finite-dimensional persistence modules,
\newblock {\em Journal of Algebra and Its Applications}, 14(5):1550066, 2015.
\newblock \href{https://doi.org/10.1142/S0219498815500668}{doi:10.1142/S0219498815500668}.

\bibitem{curry2014sheaves}
J.~M.~Curry,
\newblock {\em Sheaves, Cosheaves and Applications},
\newblock PhD thesis, University of Pennsylvania, 2014.
\newblock \href{https://arxiv.org/abs/1303.3255}{arXiv:1303.3255}.

\bibitem{dey2022computational}
T.~K.~Dey and Y.~Wang,
\newblock {\em Computational Topology for Data Analysis},
\newblock Cambridge University Press, Cambridge, United Kingdom, 2022.
\newblock \href{https://doi.org/10.1017/9781009099950}{doi:10.1017/9781009099950}.

\bibitem{gabriel1972unzerlegbare}
P.~Gabriel,
\newblock Unzerlegbare Darstellungen I,
\newblock {\em Manuscripta Mathematica}, 6(1):71--103, 1972.
\newblock \href{https://doi.org/10.1007/BF01298413}{doi:10.1007/BF01298413}.

\bibitem{jacquard2023barcode}
E.~Jacquard, V.~Nanda, and U.~Tillmann,
\newblock The space of barcode bases for persistence modules,
\newblock {\em Journal of Applied and Computational Topology}, 7:1--30, 2023.
\newblock \href{https://doi.org/10.1007/s41468-022-00094-6}
{doi:10.1007/s41468-022-00094-6}.


\bibitem{lam1978serre}
T.~Y.~Lam,
\newblock {\em Serre's Conjecture},
\newblock Lecture Notes in Mathematics, vol.~635, Springer, Berlin, 1978.
\newblock \url{https://link.springer.com/book/10.1007/BFb0068340}

\bibitem{lesnick2015interleaving}
M.~Lesnick,
\newblock The theory of the interleaving distance on multidimensional persistence modules,
\newblock {\em Foundations of Computational Mathematics}, 15(3):613--650, 2015.
\newblock \href{https://doi.org/10.1007/s10208-015-9255-y}{doi:10.1007/s10208-015-9255-y}.

\bibitem{lesnick2019interactive}
M.~Lesnick and M.~Wright,
\newblock Interactive visualization of 2-d persistence modules,
\newblock \href{https://arxiv.org/abs/1512.00180}{arXiv:1512.00180.}.

\bibitem{miller2020posets}
E.~Miller,
\newblock Homological algebra of modules over posets,
\newblock {\em SIAM Journal on Applied Algebra and Geometry}, 4(2):399--466, 2020.
\newblock \href{https://epubs.siam.org/doi/10.1137/22M1516361}{doi:10.1137/19M1284080}.

\bibitem{nastasescu2004graded}
C.~N\u{a}st\u{a}sescu and F.~Van~Oystaeyen,
\newblock {\em Methods of Graded Rings},
\newblock Lecture Notes in Mathematics, Vol.~1836,
\newblock Springer, Berlin--Heidelberg, 2004.
\newblock \href{https://link.springer.com/book/10.1007/b94904}{SpringerLink}.

\bibitem{serre1955fac}
J.-P.~Serre,
\newblock Faisceaux alg\'ebriques coh\'erents,
\newblock {\em Annals of Mathematics}, 61(2):197--278, 1955.
\newblock \url{https://www.jstor.org/stable/1969915}

\bibitem{stacks-project}
The {Stacks Project Authors},
\newblock {\em The Stacks Project},
\newblock \href{https://stacks.math.columbia.edu}{https://stacks.math.columbia.edu}.

\bibitem{webb1985decomposition}
C.~Webb,
\newblock Decomposition of graded modules,
\newblock {\em Proceedings of the American Mathematical Society}, 94(4):565--571, 1985.
\newblock \href{https://www.semanticscholar.org/paper/Decomposition-of-graded-modules-Webb/74218297b6f4c00a56de7f4c700516cb5c61de66}{doi:10.1090/S0002-9939-1985-0786058-3}.

\bibitem{zomorodian2005computing}
A.~Zomorodian and G.~Carlsson,
\newblock Computing persistent homology,
\newblock {\em Discrete \& Computational Geometry}, 33(2):249--274, 2005.
\newblock \href{https://doi.org/10.1007/s00454-004-1146-y}{doi:10.1007/s00454-004-1146-y}.

\end{thebibliography}
\end{document}